\theoremstyle{plain}
\newtheorem{theorem}{Theorem}[section]
\theoremstyle{definition}
\newtheorem{definition}[theorem]{Definition}
\newtheorem{proposition}[theorem]{Proposition}
\newtheorem{remark}[theorem]{Remark}
\newtheorem{corollary}[theorem]{Corollary}
\newtheorem{lemma}[theorem]{Lemma}
\renewenvironment{proof}{{\noindent \bf  Proof.}}{\qed}
\def\flap{\Delta^{\frac{\alpha}{2}}}
\newcommand{\myF}[2]{F_{#2}\left[#1\right]}
\newcommand{\overbar}[1]{\mkern 1.5mu\overline{\mkern-1.5mu#1\mkern-1.5mu}\mkern 1.5mu}
\numberwithin{equation}{section}
\begin{document}
\title[\hfil space-time fractional EE\MakeLowercase{s}\hfil]{{\itshape Stochastic solutions for space-time fractional evolution equations on bounded domain}}
\author{L\MakeLowercase{orenzo} T\MakeLowercase{oniazzi}}
\date{\today}
\keywords{Inhomogeneous Caputo evolution equation, restricted fractional Laplacian, Mittag-Leffler functions, stable L\'evy processes, nonlocal boundary condition, time change, Feller semigroup.}
\address{L. Toniazzi\newline
Department of Mathematics, University of Warwick, Coventry, United Kingdom. }
\email{l.toniazzi@warwick.ac.uk}

\thanks{The author is funded by the EPSRC, UK}
\subjclass[2010]{26A33, 34A08,  35A09, 	35C15, 60H30, 60G52}
\maketitle
\begin{abstract} Space-time fractional evolution equations are a powerful tool to model diffusion displaying space-time heterogeneity. 
We prove existence, uniqueness and stochastic representation of classical solutions for an extension of Caputo evolution equations featuring time-nonlocal initial conditions. We discuss the interpretation of the new stochastic representation. As part of the proof a new result about inhomogeneous Caputo evolution equations is proven.
\end{abstract}
\section{Introduction}
It is a classical result that the solution to the standard heat equation $\partial_tu=\Delta u$, $u(0)=\phi_0$ allows the stochastic representation $u(t,x)=\mathbf E[\phi_0(X^{x,2}(t))]$, where $X^{x,2}$ is a Brownian motion started at $x\in\mathbb R^d$. 
Space-time fractional evolution equations (EEs) extend  the  heat equation by introducing space-time heterogeneity. This often is done by considering the Caputo EE $D^\beta_0u=-(-\Delta)^{\frac\alpha 2}u$, where one substitutes the local operators $\partial_t$ and $\Delta$ with fractional analogues. Respectively, the Caputo derivative $ D^\beta_0u(t) = c_\beta\int_0^tu'(r)(t-r)^{-\beta}\,dr $ and the fractional Laplacian $(-\Delta)^{\frac\alpha 2}u(x)= \mathcal F^{-1}(|\xi|^\alpha \mathcal F u(\xi))(x)$, where $\beta\in(0,1)$, $\alpha\in(0,2)$, $c_\beta=\Gamma(1-\beta)^{-1}$ and $\mathcal F$ is the Fourier transform (for standard references see \cite{kai,Bogdan}). It is well known that the fundamental solution to the Caputo EE is  the law of the non-Markovian anomalous diffusion $Y^x(t)=X^{x,\alpha}(\tau_0(t))$ (see, e.g., \cite{Meerschaert2012}). Here  $X^{x,\alpha}$  is  the rotationally symmetric $\alpha$-stable L\'evy process  started at $x\in\mathbb R^d$ and  $\tau_0(t)$ is the inverse process of the $\beta$-stable subordinator $X^\beta(t)$. The density of this beautiful formula was first observed in \cite{Zas97}. The time change interpretation first appeared in \cite{meershe,schef04}, based on \cite{BM01}. The process $Y^x$ displays space-heterogeneity due to the jump nature of $X^{x,\alpha}$. Also time-heterogeneity features in $Y^x$, as the time change $t\mapsto\tau_0(t)$ is constant precisely when the subordinator $t\mapsto X^\beta(t)$ jumps,  so that $t\mapsto Y^x(t)$ is trapped on such time intervals. This interesting trapping phenomenon leads to the process $Y^x$ spreading at a slower rate than $X^{x,\alpha}$. Indeed, in the physics literature the anomalous diffusion $Y^x$ is often referred to as a sub-diffusion when $\alpha=2$ (see, e.g., \cite{Zas94,Sai05,Mag07}). See \cite{schef04} for a characterisation of $Y^x$ as the scaling limit of continuous time random walks with heavy-tailed waiting times. See \cite{BC11} for a characterisation of $Y^x$ as the scaling limit of random conductance models or asymmetric Bouchaud's trap models ($\alpha=2$). See  \cite{Mag10,Mag15} for sample path properties of $Y^x$, and \cite{DS18,CK18} for heat kernel asymptotic formulas. Existence of classical solutions for Caputo EEs is generally a subtle problem. The works \cite{Ko03,BMN09,Caff16} tackle classical solutions on unbounded domains. Meanwhile the works  \cite{MeerChen,Vel09,Vel11,Leon} consider bounded domains, and all their proofs rely on the spectral decomposition of the spatial operator. Stochastic representations for solutions to time-nonlocal equations is an active area of theoretical research (see, e.g., \cite{MeerB,Chen17,HKT17,CK18}). Partly because they provide formulas in the general absence of closed forms along with suggesting probabilistic proof methods.  Moreover, such representations can be useful for particle tracking codes (see, e.g., \cite{ZMB08}). Let us remark that  Caputo EEs are applied in a variety of fields, such as physics, finance, economics, biology and hydrogeology (see, e.g., \cite{Zas97,Scal06,Scal00,MeeB13,Fed}).

In this work we focus on the following extension of the Caputo EE: the inhomogeneous space-time fractional EE on bounded domain with Dirichlet boundary conditions and time-nonlocal initial condition
\begin{equation}\label{preRL}
\left\{
\begin{split}
D_{\infty}^{\beta}  \tilde u(t,x)&=\flap_\Omega   \tilde u(t,x)+g(t,x), & \text{in }& (0,T]\times \Omega,\\ 
 \tilde u(t,x)&=0, &  \text{in }&[0,T]\times \partial\Omega, \\
 \tilde u(t,x)&=\phi(t,x),  &\text{in }&(-\infty,0]\times\Omega,
\end{split}
\right.
\end{equation}
where $\Omega\subset\mathbb R^d$ is a regular domain, $\flap_\Omega $ is the restricted fractional Laplacian\footnote{We define $\flap_\Omega$ on functions on $\Omega$, so that the Euclidean boundary $\partial\Omega$ makes sense in (\ref{preRL}). In the literature the operator $\flap_\Omega$ is often defined through the application of the singular integral definition of $-(-\Delta)^{\frac\alpha 2}$  to functions vanishing outside $\Omega$ (see, e.g., \cite{Bon16}).}, and the time operator $-D_{\infty}^{\beta}$  is the generator of the inverted $\beta$-stable subordinator\footnote{The operator  $D_{\infty}^{\beta}$ is often referred to as the Marchaud derivative in the fractional calculus literature (see, e.g., \cite{kilbas}).}
\begin{equation}
D_{\infty}^{\beta} f(t)=\int_0^{\infty}(f(t-r)-f(t))\,\frac{\Gamma(-\beta)^{-1}dr}{r^{1+\beta}},\quad t\in\mathbb R.
\label{gen}
\end{equation}
 As the main result of this work we prove existence and uniqueness of classical solutions to problem (\ref{preRL}) along with the  stochastic representation for the solution
\begin{equation}\label{SR}
\begin{split}
\tilde u(t,x)=&\ \mathbf E\left[\phi\left(-X^{t,\beta}(\tau_0(t)),X^{x,\alpha}(\tau_0(t))\right)\mathbf 1_{\{\tau_0(t)<\tau_\Omega(x)\}}\right] \\ 
&\ +\mathbf E \left[\int_0^{\tau_0(t)\wedge \tau_\Omega(x)}g \left(-X^{t,\beta}(s),X^{x,\alpha}(s)\right)ds\right],
\end{split}
\end{equation}
where the  processes $-X^{t,\beta}=t-X^\beta$ and  $X^{x,\alpha}$ are independent, and $\tau_{\Omega}(x)$ is the first exit time of $X^{x,\alpha}$ from $\Omega$. To see why problem (\ref{preRL}) extends the Caputo EE, let $\phi(t)=\phi(0)$ for every $t\in(-\infty,0)$ and $g=0$ in both (\ref{preRL}) and (\ref{SR}). Then 
\begin{equation*}
D_{\infty}^{\beta}  \tilde u(t)=\int_0^t (\tilde u(t-r)-\tilde u(t))\,\frac{\Gamma(-\beta)^{-1}dr}{r^{1+\beta}}-\frac{\phi(0)-\tilde u(t)}{\Gamma(1-\beta)}t^{-\beta}=D^\beta_0u(t),
\end{equation*}
where $u$ is the restriction of $\tilde u$ to $t\ge0$, and one obtains the homogeneous Caputo EE  and its solution, respectively. The recent works \cite{Du17,DYZ17} introduced a class of EEs that formally includes (\ref{preRL}). They are motivated by the success of related nonlocal EEs arising in image processing, peridynamics and heat conduction  (see, e.g.,  \cite{Gil08,Bob10,Sil10,Gu12}), and  the general lack of alternatives to Caputo-type time-nonlocal models. 
Part of their intent is to introduce initial conditions on the `past' ($\phi$ on $(-\infty,0)\times\Omega$).  Our stochastic solution  (\ref{SR}) appears to be new, and it provides an interesting interpretation for the time-nonlocal initial condition $\phi$. This is because the overshoot $W(t)=X^{t,\beta}(\tau_0(t))$ is the waiting/trapping time of the anomalous diffusion $X^{x,\alpha}(\tau_0(t))$. We  discuss an interpretation where the values of $\phi$ on $(-\infty,0)\times\Omega$ describe the initial condition at time $0$ with respect to the `depth' of $\Omega$, rather than the `past' of $\Omega$. To the best of our knowledge, there are no classical-wellposedness results for the EE (\ref{preRL}). Related weak-wellposedness results can be found in  \cite{Du17,DYZ17} (for certain general L\'evy kernels in (\ref{gen})) and indirectly in \cite{miao} (for abstract Markovian generators), meanwhile \cite{Allen17} considers uniqueness of weak solutions. Worth mentioning that our simple Lemma \ref{lem_equivsol} allows to obtain wellposedness and regularity results for EEs such as (\ref{preRL})  as corollaries of theorems concerning inhomogeneous Caputo EEs (see, e.g., \cite{Ko03,Caff16}). To see why the stochastic representation (\ref{SR}) is natural, one can formally apply the classical probabilistic intuition for elliptic boundary value problems (see, e.g., \cite[Introduction, \S 3]{dynkin1965}) to problem (\ref{preRL})  rewritten  as 
\begin{equation}\label{bvpp}
\left\{
\begin{split}
\mathcal L\tilde u&=-g, & \text{in }&\Gamma,\\
\tilde u&=\phi, &  \text{in }&\partial\Gamma,
\end{split}
\right.
\end{equation}
where $\mathcal L= (-D_{\infty}^{\beta}+\flap_\Omega)$ is the generator of the  process $ \{(-X^{t,\beta}(s),X^{x,\alpha}(s))\mathbf1_{\{s<\tau_\Omega(x)\}}\}_{s\ge0} $ taking values in $(-\infty,T]\times \Omega$, $ \Gamma=(0,T]\times \Omega$, and $\partial \Gamma:=(-\infty,0]\times\Omega\cup [0,T]\times\partial \Omega,$  with $\phi=0$ on $ (0,T]\times\partial \Omega$.

To prove  our main result, Theorem \ref{thm_main}, we derive  two results of independent interest. Namely: 
\begin{itemize}
	\item Theorem \ref{thm_sspostRL}: the stochastic representation 
	\begin{equation}\label{SRpostRL}\begin{split}
	u(t,x)=&\ \mathbf E\left[\phi_0\left(X^{x,\alpha}(\tau_0(t))\right)\mathbf 1_{\{\tau_0(t)<\tau_\Omega(x)\}}\right]\\
	&\ +\mathbf E \left[\int_0^{\tau_0(t)\wedge \tau_\Omega(x)}f \left(-X^{t,\beta}(s),X^{x,\alpha}(s)\right)ds\right],
	\end{split}
	\end{equation}
is the  unique classical solution to the inhomogeneous Caputo EE on bounded domain 
	\begin{equation}\label{postRL}
\left\{
\begin{split}
D_{0}^{\beta}   u(t,x)&=\flap_\Omega    u(t,x)+f(t,x), & \text{in }& (0,T]\times \Omega,\\ 
  u(t,x)&=0, &  \text{in }&[0,T]\times \partial\Omega, \\
  u(t,x)&=\phi_0(x),  &\text{in }&\{0\}\times\Omega;
\end{split}
\right.
\end{equation}
	\item Theorem \ref{thm_wspostRL}: the  stochastic representation (\ref{SRpostRL}) is a weak solution to problem (\ref{postRL}).
\end{itemize}
Let us outline our proof strategy for Theorem \ref{thm_main}. By plugging the values of $\phi$ in $\tilde u$, it is not hard to show the equivalence of classical solutions to problem (\ref{preRL}) and to problem (\ref{postRL}) with forcing term $f=g-D^\beta_\infty\phi$ and initial condition $\phi_0=\phi(0)$ (see Lemma \ref{lem_equivsol}).  Moreover,  a Dynkin formula argument proves that the respective stochastic representations (\ref{SR}) and  (\ref{SRpostRL}) agree (see Lemma \ref{lem_SRs}). Hence, it is enough to prove Theorem \ref{thm_sspostRL}. We do so by proving Theorem \ref{thm_wspostRL} and then showing the required regularity of the candidate solution (\ref{SRpostRL}). The main feature of our regularity assumption on the data $\phi$ and $g$ is the differentiability in time. This is a consequence of the regularity assumption  on $f$ in Theorem \ref{thm_sspostRL}, which we discuss now. Theorem \ref{thm_sspostRL} extends the proof of \cite[Theorem 5.1]{MeerChen}, where problem (\ref{postRL}) is treated for $f=0$. This proof uses separation of variables combing eigenfunction expansions of $\flap_\Omega$ with Mittag-Leffler solutions to the Caputo initial value problem. Our separation of variables formula for the second term in (\ref{SRpostRL}) reads
\begin{equation*}
\sum_{n=1}^\infty\psi_n(x) u_n(t) = \sum_{n=1}^\infty\psi_n(x) \int_0^t \langle f(s),\psi_n\rangle (t-s)^{\beta-1} \beta E_\beta'(-\lambda_n (t-s)^\beta)\,ds,
\label{eigenex}
\end{equation*}
 where  $E_\beta(t)=\sum_{k=0}^\infty t^k\Gamma(k\beta+1)^{-1}$ is a Mittag-Leffler function,  $\{\lambda_n,\psi_n\}_{n\in\mathbb N}$ is the system of eigenvalues-eigenfunctions of $\flap_\Omega$ and  $\langle \cdot,\cdot\rangle$ is the inner product on $\Omega$. Unsurprisingly, each $u_n$ is the solution  to the inhomogeneous Caputo initial value problem $D^{\beta}_0 u_n(t)=-\lambda_nu_n(t)+\langle f(t),\psi_n\rangle$, $u_n(0)=0$ (see \cite[Theorem 7.2]{kai}). As  we require differentiability of $t\mapsto u(t)$, we want to differentiate each $t\mapsto u _n(t)$. To compensate for the singularity of the Mittag-Leffler kernel $t^{\beta-1}E_\beta'(-\lambda_n t^\beta)$ we require differentiability of $t\mapsto f(t)$. Note that for the space fractional heat equation ($\beta=1$) the Mittag-Leffler kernel  is an exponential, and so continuity of $f$ is enough to differentiate the $u_n$'s. Related results in the literature also require differentiability on $f$ (see, e.g, \cite[Theorem 7.3]{Caff16}). Briefly, the arguments for Theorem \ref{thm_wspostRL} reduce the Caputo EE (\ref{postRL}) to a Poisson equation with zero boundary conditions on $\{0\}\times\Omega\cup [0,T]\times\partial\Omega$ by constructing space-time sub-Feller semigroups. We rely on the fact that the  generator $-D_{0}^{\beta} $ only requires boundary conditions on the trivial set $\{0\}$. These arguments are an extension of the ideas in \cite{HKT17}, and they appear versatile. For example, they can be used to prove stochastic weak solutions for problem (\ref{preRL}) with general nonlocal operators  in both space and time (ongoing work with the authors in \cite{DYZ17}). As far as we know,  stochastic representations for solutions such as (\ref{SRpostRL}) for time-nonolocal  EEs appear in  \cite{HKT17}, meanwhile in \cite{Meer05} the solution is given a representation via the superposition principle. Possibly worth mentioning that we do not invoke \cite[Theorem 3.1]{BM01} and all our methods work for the standard Laplacian case $\alpha=2$. 

This work is structured as follows: in Section \ref{sec_preliminaries} we provide general notation and basic results about several stochastic processes obtained from $-X^{t,\beta}$ and $X^{x,\alpha}$, with a focus on semigroup results. In Section \ref{sec_ws} we prove Theorem \ref{thm_wspostRL}. In Section \ref{sec_existence} we prove Theorem \ref{thm_sspostRL}. In Section \ref{sec_sspre} we prove that the stochastic representation (\ref{SR}) is the unique classical solution to the EE (\ref{preRL}). In Section \ref{sec_int} we discuss an interpretation of the stochastic representation (\ref{SR}).
\section{Preliminaries}\label{sec_preliminaries}
\subsection{General notation} 
 We denote by $\mathbb N,$ $ \mathbb R^d,$ $ \Gamma(\cdot),$ $ \mathbf 1_{A}(\cdot)$, $a\wedge b$, a.e., lhs and rhs, the set of natural numbers, the $d$-dimensional Euclidean space, the gamma function, the indicator function of the set $A$, the minimum between $a,b\in\mathbb R$, the statements almost everywhere with respect to Lebesgue measure, left hand side and right hand side, respectively. We define  the one parameter Mittag-Leffler function for $\beta\in(0,1)$ as $E_\beta(t)=\sum_{k=0}^\infty t^k\Gamma(k\beta+1)^{-1}$, $t\ge0$. We define the Banach spaces 
\begin{align*} 
B(A)=&\ \{f:A\to\mathbb R\text{ is bounded and measurable}\},\\
C(K)=&\ \{f\in B(K): f\text{ is continuous}\},\\
C_{\partial\Omega}(\Omega)=&\ \{f\in C(\overbar \Omega): f=0\text{ on }\partial \Omega\},\\
C_{0}([0,T])=&\ \{f\in C([0,T]): f(0)=0\}, \\
C_{\infty}((-\infty, T])=&\ \{f\in B((-\infty, T]): f\text{ is continuous and vanishes at infinity}\}, \\
C_{\partial\Omega}([0,T]\times \Omega)=&\ \{f \in C([0,T]\times \overbar\Omega): f=0\text{ on }\partial\Omega\},\\
C_{0,\partial\Omega}([0,T]\times \Omega)=&\ \{f \in C_{\partial\Omega}([0,T]\times \Omega): f(0)=0\},\\
C_{\infty,\partial\Omega}((-\infty, T]\times\Omega)=&\ \{f\in B((-\infty, T]\times\Omega):f\text{ is continuous and vanishes at infinity}  \},\\
C_{b,\partial\Omega}((-\infty, T]\times\Omega)=&\ \{f\in B((-\infty, T]\times\overbar\Omega):f\text{ is continuous and } f=0\text{ on }\partial\Omega   \},
\end{align*}
all equipped with the supremum norm, where $A$ is any subset of $\mathbb R^d$, the set $K\subset \mathbb R^d$ is compact, the set $\Omega\subset \mathbb R^d$ is bounded and open,  $ T\ge0$. For a function $f:A\to \mathbb R$ we denote its supremum norm by either $\|f\|_\infty$ or $\|f\|_{C(A)}$. We define the spaces
\begin{align*} 
C(O)=&\ \{f:O\to \mathbb R \text{ is continuous}\},\\
C^{k}(\Omega)=&\ \{f\in C(\Omega):f\text{ is $k$-times continuously differentiable} \}, \\
C^{k}_c(\Omega)=&\ \{f\in C(\Omega):f\in C^{k}(\Omega)\text{ and compactly supported} \}, \\
C^{\infty}_c(\Omega)=&\ \{f\in C(\Omega):f\text{ is smooth and compactly supported} \}, \\
C^1([0,T])=&\ \{f,f'\in C([0,T])\},\\
C^1_0([0,T])=&\ \{f,f'\in C_{0}([0,T])\},\\
C^{1}_\infty((-\infty, T])=&\ \{f,f'\in C_\infty((-\infty, T])\},\\
C^{1,k}((0,T)\times\Omega)=&\ \{f\in C((0,T)\times\Omega): f\text{ is  $1$-time and $k$-times continuously  }\\
& \quad\quad\text{differentiable in time and space, respectively}\},\\
C^{1,k}_c((0,T)\times\Omega)=&\ \{f\in C^{1,k}((0,T)\times\Omega): f\text{ is compactly supported}\},\\
C_{\partial\Omega}^1([0,T]\times \Omega)=& \ \{f\in C_{\partial\Omega}([0,T]\times \Omega):f\in C^{1,0}((0,T)\times\Omega), f'\in C_{\partial\Omega}([0,T]\times \Omega) \},\\
C_{\infty,\partial\Omega}^{n,k}((-\infty,T]\times \Omega)=& \ \{f\in C_{\infty,\partial\Omega}((-\infty,T]\times \Omega):\text{all derivatives up to order $n$ in time}\\
& \quad\quad\text{and $k$ in space exist and belong to }C_{\infty,\partial\Omega}((-\infty,T]\times \Omega) \},
\end{align*}
where the set $O\subset \mathbb R^d$ is open. We write $C_{\infty,\partial\Omega}^{1,0}((-\infty,T]\times \Omega)=C_{\infty,\partial\Omega}^1((-\infty,T]\times \Omega)$ and $C_{b,\partial\Omega}^1((-\infty, T]\times\Omega)= \{f,\partial_tf\in C_{b,\partial\Omega}((-\infty, T]\times\Omega)\}$. By $(L^1(O),\|\cdot\|_{L^1(O)})$, $(L^2(O),\|\cdot\|_{L^2(O)})$ and $(L^\infty(O),\|\cdot\|_{L^{\infty}(O)})$ we mean the standard  Banach spaces of real-valued Lebesgue integrable, square-integrable and essentially bounded functions on $O$, respectively. Without risk of confusion we write $\|\cdot\|_{L^{\infty}(O)}=\|\cdot\|_{\infty}$. We denote by $\|L\|$ the  operator norm of a bounded linear operator $L$ between Banach spaces. Given two sets of real-valued functions $F$ and $\tilde F$, we define $F\cdot\tilde F:=\{f\tilde f: f\in F,\ \tilde f\in\tilde F\}$, and by $\text{Span}\{F\}$ we mean the set of all linear combinations of functions in $F$. The notation we use for an $E$-valued stochastic process started at $x\in E$ is $X^x=\{X^x(s)\}_{s\ge0}$. Note that the symbol $t$ will often be used to denote the starting point of a stochastic process with state space $E\subset \mathbb R$. By a \emph{ strongly continuous contraction semigroup} $P$ we mean a collection of linear operators $P_s:B\to B$, $s\ge0$, where $B$ is a Banach space, such that $P_{s+r}=P_sP_r$, for every $s,r\ge 0$, $P_0$ is the identity operator, $\lim_{s\downarrow 0}P_sf=f$ in $B$, for every $f\in B$,  and $\sup_s\|P_s\|\le1$. The generator of the semigroup $P$ is defined as the pair $(\mathcal L,\text{Dom}(\mathcal L))$, where $\text{Dom}(\mathcal L):=\{f\in B: \mathcal L f:=\lim_{s\downarrow0}s^{-1}(P_sf-f) \text{ exists in }B\}$. We say that a set $C\subset \text{Dom}(\mathcal L)$ is a \emph{core for} $(\mathcal L,\text{Dom}(\mathcal L))$ if the generator equals the closure of the restriction of $\mathcal L$ to $C$. We say that a set $C\subset B$ is \emph{invariant under} $P$ if $P_sC\subset C$ for every $s>0$. If a set $C$ is invariant under $P$ and a core for  $(\mathcal L,\text{Dom}(\mathcal L))$, then we say that $C$ is an \emph{invariant core for} $(\mathcal L,\text{Dom}(\mathcal L))$.  For a given $\lambda\ge0$ we define the \emph{resolvent of} $P$  by $(\lambda-\mathcal L)^{-1}:=\int_0^\infty e^{-\lambda s}P_s ds$, and recall that for $\lambda >0$, $(\lambda-\mathcal L)^{-1}:B\to \text{Dom}(\mathcal L)$ is a bijection and it solves the abstract resolvent equation 
\begin{equation*}
\mathcal L(\lambda-\mathcal L)^{-1}f=\lambda (\lambda-\mathcal L)^{-1}f-f,\quad f\in B,
\end{equation*}
 see for example \cite[Theorem 1.1]{dynkin1965}. By a \emph{sub-Feller semigroup} we mean a strongly continuous contraction semigroup on any of the Banach spaces of continuous functions defined above such that $P$ preserves non-negative functions. A \emph{Feller semigroup} is a sub-Feller semigroup such that its extension to bounded measurable functions preserves constants. 

\subsection{Fractional derivatives, stable processes and related space-time semigroups}
\begin{definition}\label{def_operators}
For parameters $\beta\in(0,1)$ and $\alpha\in(0,2)$,  we define: the \emph{Marchaud derivative} $D^{\beta}_{\infty}$ by formula (\ref{gen}); the \emph{Caputo derivative} $D^\beta_0$ by
\begin{equation*}
D^\beta_0f(t)=\int_0^{t} (f(t-r)-f(t))\,\frac{\Gamma(-\beta)^{-1}dr}{r^{1+\beta}}+(f(0)-f(t))\int_t^\infty \frac{\Gamma(-\beta)^{-1}dr}{r^{1+\beta}},\quad t>0,
\end{equation*}
and $D^\beta_0f(0)=\lim_{t\downarrow 0}D^\beta_0f(t)$; the \emph{restricted fractional Laplacian} $\flap_\Omega$ by 
\begin{equation*}
\flap_\Omega f(x)=\lim_{\varepsilon\downarrow 0}\int_{ \Omega\backslash B_\varepsilon(x)}(f(y)-f(x))\,\frac{c_{\alpha,d}\,dy}{|x-y|^{d+\alpha}}-f(x)\int_{\mathbb R^d\backslash \Omega}\frac{c_{\alpha,d}\,dy}{|x-y|^{d+\alpha}},\quad x\in\Omega,
\end{equation*}
and $\flap_\Omega f(z)=\lim_{x\to z}\flap_\Omega f(x)$ for $z\in\partial\Omega$, where $c_{\alpha,d}^{-1}=\int_{\mathbb R^d}\frac{1-\cos y_1}{|y|^{d+\alpha}}\,dy$, $|\cdot|$ denotes the Euclidean norm on $\mathbb R^d$ and $B_\varepsilon(x)$ denotes the Euclidean ball of radius $\varepsilon>0$ around $x\in\Omega$.
\end{definition}
We now define several sub-Feller semigroups that relate  to the fractional derivatives in Definition \ref{def_operators} and collect some results relevant for us. For $\beta\in(0,1)$, we denote by $X^\beta=\{X^\beta(s)\}_{s\ge0}$ the standard $\beta$-stable subordinator, and by $p_s^\beta$ the smooth density  of $X^\beta(s)$, $s>0$. 
\begin{definition}
For $\beta\in (0,1)$, we denote by $-X^{t,\beta}=\{-X^{t,\beta}(s):=t-X^\beta(s)\}_{s\ge0}$ the\emph{ inverted $\beta$-stable subordinator started at }$t\in\mathbb R$, characterised by the Laplace transforms $\mathbf E[e^{- X^{0,\beta}(s)k}]=e^{-k^{\beta}s}$, $k,s> 0$. We define the first exit/passage times $\tau_0(t)=\inf\{s>0: t-X^{\beta}(s)\le0\}$, $t\in\mathbb R$. 
\end{definition}

\begin{definition}
For $\alpha\in (0,2)$, $d\in\mathbb N$, we denote by $X^{x,\alpha}=\{X^{x,\alpha}(s)\}_{s\ge0}$ the \emph{rotationally symmetric $\alpha$-stable L\'evy process with values in $\mathbb R^d$, started at }$x\in\mathbb R^d$, with characteristic functions $\mathbf E[e^{ik\cdot X^{0,\alpha}(s)}]=e^{-s|k|^{\alpha}}$, $k\in\mathbb R^d$, $s> 0$.  We define the first exit times $\tau_\Omega(x)=\inf\{s>0:X^{x,\alpha}(s)\notin \Omega\}$, $x\in\mathbb R^d$.

\end{definition}
Recall that the smooth density of $-X^{t,\beta}(s)$, $s>0$, is supported $(-\infty,t)$ and it equals $p_s^\beta(t-\cdot)$, and that the law of $X^{x,\alpha}(s)$ is smooth for each $s>0$ (see for example \cite[page 10]{Bogdan}).

\begin{proposition}\label{prop_csg} Fix $T>0$. For the the inverted $\beta$-stable subordinator $-X^{t,\beta}$, denote the Feller semigroup $P^{\beta,\infty}=\{P^{\beta,\infty}_s\}_{s\ge0}$ on $C_\infty((-\infty,T])$, by $P^{\beta,\infty}_sf(t):=\mathbf E[f(-X^{t,\beta}(s))]$, $s\ge 0$, denote by $(\mathcal L_\beta^\infty,\text{Dom}(\mathcal L_\beta^\infty))$ the generator of $P^{\beta,\infty}$, and recall that $C^1_\infty((-\infty,T])$ is an invariant core for $(\mathcal L_\beta^\infty,\text{Dom}(\mathcal L_\beta^\infty))$ with $\mathcal L_\beta^\infty=-D^\beta_{\infty}$ on $C^1_\infty((-\infty,T])$.
\begin{enumerate}[(i)]
	\item 
Define the absorbed process $-X^{t,\beta}_0$ by
\begin{equation}
-X^{t,\beta}_0(s):=
\left\{ \begin{split}
-X^{t,\beta}(s)&,&\text{if }s<\tau_0(t),\\
0&,&\text{if }s\ge\tau_0(t).
\end{split}
\right.
\label{def_abs}
\end{equation}
Then the process $-X^{t,\beta}_0$ induces a Feller semigroup on $C([0,T])$, denoted by $P^\beta=\{P^{\beta}_s\}_{s\ge 0}$, with generator $(\mathcal L_\beta,\text{Dom}(\mathcal L_\beta))$. Moreover, $C^1([0,T])$ is an invariant core for $(\mathcal L_\beta,\text{Dom}(\mathcal L_\beta))$ and
\begin{equation*}
\mathcal L_\beta= -D^\beta_0\quad\text{on}\quad C^1([0,T]).
\end{equation*}
\item The sub-Feller semigroup $P^{\beta,\text{kill}}:=P^\beta$ on $C_{0}([0,T])$ is the the sub-Feller semigroup induced by the killed version of the process (\ref{def_abs}), and its generator is $(\mathcal L_\beta^{\text{kill}}, \text{Dom}(\mathcal L_\beta^{\text{kill}}))=(\mathcal L_\beta, \text{Dom}(\mathcal L_\beta)\cap\{f(0)=0\})$.  Moreover, $C^{1}_0([0,T])$ is an invariant core for 
$(\mathcal L_\beta^{\text{kill}}, \text{Dom}(\mathcal L_\beta^{\text{kill}}))$ and
\begin{equation*}
\mathcal L_{\beta}^{\text{kill}}= -D^\beta_0\quad\text{on}\quad C^1_0([0,T]).
\end{equation*}
\item The following three identities hold
\begin{equation}
\mathbf E\left[\tau_0(t)\right]=\frac{t^\beta}{\Gamma(\beta+1)},\quad \mathbf E\left[e^{-\lambda \tau_0(t)}\right]=E_\beta(-\lambda t^\beta),\quad t,\lambda\ge0, \text{ and}
\label{ident1}
\end{equation}
\begin{equation}
\int_0^\infty p^\beta_s(t-r)\,ds=\frac{(t-r)^{\beta-1}}{\Gamma(\beta)},\quad t> r.
\label{ident3}
\end{equation}

\item The alternative representation of the Caputo derivative
\begin{equation*}
D^{\beta}_0u(t)=\int_0^tu'(r)\,\frac{(t-r)^{-\beta}dr}{\Gamma(1-\beta)}, \quad \text{for $0<t<T$,}
\end{equation*}
holds if $u\in C([0,T])\cap C^1((0,T))$ and $u'\in L^1((0,T))$.
\end{enumerate}
\end{proposition}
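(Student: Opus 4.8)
The plan is to read the statement off from standard properties of the one-dimensional Lévy process $-X^{t,\beta}=t-X^\beta$ and of its absorption/killing at first passage of the level $0$, together with two Laplace-transform computations; the only genuinely delicate points are the generator identifications with their invariant cores. For the preliminary claim, note that $-X^{t,\beta}$ has only downward jumps, of size $r$ and intensity $\nu(dr)=-\Gamma(-\beta)^{-1}r^{-1-\beta}\,dr$ on $(0,\infty)$ (the Lévy measure of $X^\beta$; positivity is $\Gamma(1-\beta)=-\beta\Gamma(-\beta)$), and that $t\mapsto t-X^\beta(s)$ is nonincreasing, so $(-\infty,T]$ is invariant and $P^{\beta,\infty}_sf(t)=\mathbf E[f(t-X^\beta(s))]$ is a spatially homogeneous Feller semigroup on $C_\infty((-\infty,T])$. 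Translation invariance gives $\tfrac{d}{dt}P^{\beta,\infty}_sf=P^{\beta,\infty}_sf'$, so $C^1_\infty((-\infty,T])$ is invariant; and since $\beta<1$ one has $\int_0^\infty(r\wedge1)\,\nu(dr)<\infty$, so the compensation (Dynkin) formula yields $s^{-1}(P^{\beta,\infty}_sf-f)\to\int_0^\infty(f(\cdot-r)-f(\cdot))\,\nu(dr)=-D^\beta_\infty f$ uniformly for $f\in C^1_\infty((-\infty,T])$ (bounding $|f(t-r)-f(t)|$ by $\|f'\|_\infty(r\wedge C)$ and using $-D^\beta_\infty f\in C_\infty((-\infty,T])$). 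Since a dense invariant subspace of the domain is a core, $C^1_\infty((-\infty,T])$ is an invariant core with $\mathcal L^\infty_\beta=-D^\beta_\infty$ on it.

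For parts (i)--(ii): since $X^\beta$ is nondecreasing, $\tau_0(t)=\inf\{s:X^\beta(s)\ge t\}$ and $-X^{t,\beta}_0(s)=(t-X^\beta(s))^+$, so for $t\in[0,T]$, $P^\beta_sf(t)=\mathbf E[f((t-X^\beta(s))^+)]=\int_0^t f(t-y)p^\beta_s(y)\,dy+f(0)\int_t^\infty p^\beta_s(y)\,dy$. Writing the first expression as $\mathbf E[\bar f(t-X^\beta(s))]$ with $\bar f(x)=f(x^+\wedge T)$ (bounded, and Lipschitz when $f\in C^1([0,T])$) gives the Feller properties on $C([0,T])$ exactly as above, with mass conserved because the process is absorbed, not killed; by the strong Markov property this is the semigroup of $-X^{t,\beta}_0$. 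The compensation formula then gives $s^{-1}(P^\beta_sf-f)\to\int_0^\infty(\bar f(\cdot-r)-\bar f(\cdot))\,\nu(dr)$ uniformly on $[0,T]$ for $f\in C^1([0,T])$; splitting the integral at $r=t$ identifies the limit with $-D^\beta_0f(t)$ for $t>0$, and with $0$ at $t=0$, consistently with $D^\beta_0f(0)=\lim_{t\downarrow0}\Gamma(1-\beta)^{-1}\int_0^t f'(r)(t-r)^{-\beta}\,dr=0$. Invariance of $C^1([0,T])$ follows by differentiating the explicit formula under the integral sign, which gives $\tfrac{d}{dt}P^\beta_sf(t)=\int_0^t f'(t-y)p^\beta_s(y)\,dy$ (continuous in $t$ and $0$ at $t=0$); with density, the core statement follows. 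For (ii), $C_0([0,T])$ is a closed $P^\beta$-invariant subspace ($\tau_0(0)=0$ forces $P^\beta_sf(0)=f(0)$), so $P^{\beta,\text{kill}}:=P^\beta|_{C_0([0,T])}$ is sub-Feller with generator equal to the part of $(\mathcal L_\beta,\text{Dom}(\mathcal L_\beta))$ on $C_0([0,T])$, i.e. $(\mathcal L_\beta,\text{Dom}(\mathcal L_\beta)\cap\{f(0)=0\})$; moreover $P^{\beta,\text{kill}}_sf(t)=\int_0^t f(t-y)p^\beta_s(y)\,dy=\mathbf E[f(-X^{t,\beta}(s))\mathbf 1_{\{s<\tau_0(t)\}}]$ is the killed-process semigroup, and $C^1_0([0,T])$ is an invariant core by the same argument (note $\tfrac{d}{dt}P^{\beta,\text{kill}}_sf\in C_0([0,T])$ when $f'\in C_0([0,T])$).

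For (iii): transforming (\ref{ident3}) in the spatial variable and using independence of increments, $\int_0^\infty e^{-\lambda x}\int_0^\infty p^\beta_s(x)\,ds\,dx=\int_0^\infty\mathbf E[e^{-\lambda X^\beta(s)}]\,ds=\int_0^\infty e^{-s\lambda^\beta}\,ds=\lambda^{-\beta}$, which is the Laplace transform of $x\mapsto x^{\beta-1}/\Gamma(\beta)$, so (\ref{ident3}) follows by uniqueness of Laplace transforms. Since $X^\beta$ is nondecreasing, $\mathbf P(\tau_0(t)>s)=\mathbf P(X^\beta(s)<t)$, whence $\mathbf E[\tau_0(t)]=\int_0^\infty\mathbf P(X^\beta(s)<t)\,ds=\int_0^t\big(\int_0^\infty p^\beta_s(y)\,ds\big)\,dy=t^\beta/\Gamma(\beta+1)$ by (\ref{ident3}). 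For the Mittag-Leffler identity, using $\{\tau_0(t)\le s\}=\{X^\beta(s)\ge t\}$ and a transform in $t$, $\int_0^\infty e^{-pt}\mathbf E[e^{-\lambda\tau_0(t)}]\,dt=\int_0^\infty\lambda e^{-\lambda s}p^{-1}(1-e^{-sp^\beta})\,ds=p^{\beta-1}/(\lambda+p^\beta)=\int_0^\infty e^{-pt}E_\beta(-\lambda t^\beta)\,dt$, and uniqueness finishes the proof (differentiating at $\lambda=0$ re-derives the first identity).

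For (iv): integrate the first integral in $D^\beta_0u(t)$ by parts over $r\in[\varepsilon,t]$ using $\int r^{-1-\beta}\,dr=-\beta^{-1}r^{-\beta}$. The boundary term at $r=t$ equals $(u(0)-u(t))t^{-\beta}/\Gamma(1-\beta)$ (via $\beta\Gamma(-\beta)=-\Gamma(1-\beta)$) and cancels the tail term of $D^\beta_0u(t)$; the boundary term at $r=\varepsilon$ is $O(\varepsilon^{1-\beta})\to0$ because $u$ is $C^1$ near the interior point $t$; and $\Gamma(1-\beta)^{-1}\int_\varepsilon^t u'(t-r)r^{-\beta}\,dr\to\Gamma(1-\beta)^{-1}\int_0^t u'(t-r)r^{-\beta}\,dr$ by dominated convergence, which is precisely where the hypothesis $u'\in L^1((0,T))$ is used; the substitution $r\mapsto t-r$ then gives the stated formula. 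The main obstacle will be the generator-plus-invariant-core statements in (i)--(ii): making the convergence $s^{-1}(P^\beta_sf-f)\to-D^\beta_0f$ uniform despite $\nu$ having infinite mass near $0$ and a heavy tail (so $X^\beta$ has infinite mean), correctly handling the absorption at $0$ in that estimate, and pinning down the domain in (ii); the remainder is routine.
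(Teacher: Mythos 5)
Your proof is correct, but it is genuinely more self-contained than the paper's, which outsources the hard points to the literature. For the identification $\mathcal L_\beta=-D^\beta_0$ on $C^1([0,T])$ the paper invokes the proof of \cite[Proposition 14]{Vio10}, whereas you run the compensation/Dynkin argument directly on the Lipschitz extension $\bar f(x)=f(x^+)$; this is legitimate because $X^\beta$ is a driftless subordinator with $\int_0^1 r\,\nu(dr)<\infty$, and the one point you rightly flag as delicate --- uniformity of $s^{-1}(P^\beta_sf-f)\to -D^\beta_0f$ near $t=0$ --- reduces to checking that $t\mapsto D^\beta_0f(t)$ extends continuously by $0$ at $t=0$ (both $\int_0^t(f(t-r)-f(t))\,\nu(dr)$ and $(f(0)-f(t))\int_t^\infty\nu(dr)$ are $O(t^{1-\beta})$ for $f\in C^1$), after which uniform continuity of the limit function plus stochastic continuity of $X^\beta$, uniform in the starting point by translation invariance, closes the estimate. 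For (ii) the paper pins down $\text{Dom}(\mathcal L_\beta^{\text{kill}})$ by a resolvent argument; you instead take the part of $\mathcal L_\beta$ on the closed invariant subspace $C_0([0,T])$ --- do make explicit that $P^\beta_sf(0)=f(0)$ forces $\mathcal L_\beta f(0)=0$ for every $f\in\text{Dom}(\mathcal L_\beta)$, so the a priori extra condition $\mathcal L_\beta f\in C_0([0,T])$ is automatic and the part is exactly $\text{Dom}(\mathcal L_\beta)\cap\{f(0)=0\}$. For (iii) the paper uses self-similarity of $p^\beta_1$ together with its Mellin transform and cites \cite[Theorem 2.10.2]{zolotarev} for the Mittag--Leffler identity; your Laplace-transform computations ($\lambda^{-\beta}$ for the potential density and $p^{\beta-1}/(\lambda+p^\beta)$ for the double transform of $\mathbf E[e^{-\lambda\tau_0(t)}]$) are more elementary and equally valid. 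Part (iv) is omitted by the paper as standard; your integration by parts, with the boundary term at $r=t$ cancelling the tail term of $D^\beta_0$, is the intended computation. In short, you trade dependence on \cite{Vio10} and \cite{zolotarev} for the obligation to write out the uniform-convergence estimate in (i) in full.
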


\begin{proof}
\begin{enumerate}[(i)]
	\item It is easy to prove that $P^\beta_s f(t):=\int^t_0 f(r) p^\beta_s(t-r)\,dr+ f(0)\int^0_{-\infty}p^\beta_s(t-r)\,dr$ is a Feller semigroup on $C([0,T])$, and the corresponding process is indeed $-X_0^{t,\beta}$. By using the proof of \cite[Proposition 14]{Vio10}\footnote{We select $c_+ = \Gamma(-\alpha)^{-1}$ and $c_-=0$ in \cite[Proposition 14]{Vio10}. In the statement of \cite[Proposition 14]{Vio10} it is required that $F\in C^2([0,\infty))$, but $F\in C^1([0,\infty))$ is enough.   }, it holds that $C^1([0,T])\subset \text{Dom}(\mathcal L_\beta)$, and that $\mathcal L_\beta=-D_0^\beta$ on $C^1([0, T])$. To prove that $C^1([0, T])$ is invariant under $P^\beta$, we directly compute for $g\in C^1([0, T])$, $t\in (0, T)$ and $s > 0$, 
	\begin{align*}
	\partial_t P_s^\beta g(t)&=\partial_t\left(\int_0^tg(t-r)p^\beta_s(r)\,dr+g(0)\int_{-\infty}^{-t} p^\beta_s(-r)\,dr\right)\\
	&=\int_0^tg'(t-r)p^\beta_s(r)\,dr \pm g(0)p^\beta_s(t).
	\end{align*} 
	Then $C^1([0,T])$ is a dense subspace of $\text{Dom}(\mathcal L_\beta)$ which is invariant under $P^\beta$, and so it is a core for $(\mathcal L_\beta,\text{Dom}(\mathcal L_\beta))$ by \cite[Lemma 1.34]{Schilling}.
	\item Similarly  to part (i), it can be shown that $P^{\beta,\text{kill}}_s f(t)=\int^t_0 f(r) p^\beta_s(t-r)\,dr$. To show  $\text{Dom}(\mathcal L_\beta)\cap\{f(0)=0\}\subset \text{Dom}(\mathcal L_\beta^{\text{kill}})$ , let $f\in \text{Dom}(\mathcal L_\beta)\cap\{f(0)=0\}$, then for some $\lambda>0$, let $g\in C([0,T])$ such that
	\begin{equation*}
	f(t)=\int_0^\infty e^{-\lambda s}P^{\beta}_s g(t)\,ds,\quad\text{and}\quad g(0)\frac{1}{\lambda}=\int_0^\infty e^{-\lambda s}P^{\beta}_s g(0)\,ds=f(0)=0,
	\end{equation*}
	and so $g\in C_0([0,T])$. As $P^{\beta}_s=P^{\beta,\text{kill}}_s$ on $ C_0([0,T])$, it follows that $f\in  \text{Dom}(\mathcal L_\beta^{\text{kill}})$. The  inclusion $\text{Dom}(\mathcal L_\beta)\cap\{f(0)=0\}\supset \text{Dom}(\mathcal L_\beta^{\text{kill}})$ is immediate using $P^{\beta}_s=P^{\beta,\text{kill}}_s$ on $ C_0([0,T])$. By equating a resolvent equation, it follows that $\mathcal L_\beta^{\text{kill}}=\mathcal L_\beta$ on $\text{Dom}(\mathcal L_\beta^{\text{kill}})$. Invariance of $C^1_0([0,T])$ can be proven as in part (i). The last statement now follows from part (i).
	\item The first identity  follows from the third identity (\ref{ident3}). The second identity follows by \cite[Theorem 2.10.2]{zolotarev}. To prove the third identity (\ref{ident3}), recall that
\begin{equation*} \label{E:ps-beta}
p_s^{\beta} (t-r) = s^{-1/\beta} p^{\beta}_1 (s^{-1/\beta} (t-r)), \quad t>r,
\end{equation*}
 and then compute
  \begin{align*}
\int_0^{\infty} p_s^{\beta}(t,r)\,ds &= (t-r)^{\beta-1}  \int_0^{\infty} u^{-1/\beta} p^{\beta}_1 (u^{-1/\beta})\,du =  (t-r)^{\beta-1}\frac{1}{\Gamma(\beta)}, \label{beta-density}
\end{align*}
 using  the Mellin transform of the $\beta$-stable density $p^{\beta}_1$ for the last equality (see for example \cite[Theorem 2.6.3]{zolotarev}). 
	\item This is a standard computation and we omit it.
\end{enumerate}
\end{proof}

We say that a bounded open set $\Omega\subset\mathbb R^d$ is a \emph{regular set} if  $\Omega$ satisfies the exterior cone condition at every point $\partial\Omega$, i.e. for each $x\in\partial\Omega$ there exists a finite right circular open cone $V_x$ with vertex $x$, such that $V_x\subset \Omega^c$  (see \cite[end of Section 4]{MeerChen}). From now on   $\Omega$ is always a regular set.
\begin{proposition}\label{prop_fl}
Define the sub-process  $X^{x,\alpha}_\Omega$ started at $x\in\Omega$ by 
\begin{equation*}
X^{x,\alpha}_\Omega(s):=
\left\{
\begin{split}
X^{x,\alpha}(s),&\quad  s<\tau_\Omega(x),\\
\text{cemetery},&\quad   s\ge\tau_\Omega(x),
\end{split}
\right.
\end{equation*} 
\begin{enumerate}[(i)]
	\item Then $X^{x,\alpha}_\Omega$ induces  a sub-Feller semigroup on $C_{\partial\Omega}(\Omega)$, which we denote by $P^\Omega =\{P^{\Omega}_s\}_{s\ge0}$, and we denote its generator by $(\mathcal L_\Omega, \text{Dom}(\mathcal L_\Omega))$. Moreover if $u\in \text{Dom}(\mathcal L_\Omega)$ then there exists a sequence $u_n\in  C_{\partial\Omega}(\Omega)\cap C^2(\Omega)$ such that $u_n\to u$ uniformly and $\flap_\Omega u_n\to \mathcal L_\Omega u$ uniformly on compact subsets of $\Omega$. The transition density of $X^{x,\alpha}_\Omega(s)$, denoted by $p^\Omega_s(x,y),$ is jointly continuous in $x$ and $y$, for every $s>0$. 
	\item For every $u\in \text{Dom}(\mathcal L_\Omega)$ and $\varphi\in C_c^2(\Omega)$ it holds
		\begin{equation}
	\int_\Omega \mathcal L_\Omega u \varphi\, dx=	\int_\Omega u \flap_\Omega  \varphi \,dx.
	\label{dual_dflg}
	\end{equation}
	\item The semigroup $P^\Omega$ induces a strongly continuous  contraction semigroup on $L^2(\Omega)$, and we denote its generator by $(\mathcal L_{\Omega,2},\text{Dom}(\mathcal L_{\Omega,2}))$. Moreover  there exists a sequence of positive numbers $0 < \lambda_1 < \lambda_2 \le \lambda_3 \le \dots, $ and an orthonormal basis $\{\psi_n \}_{n\in\mathbb N}$ of $L^2(\Omega)$, so that $P^\Omega_s\psi_n = e^{-\lambda_n s }\psi_n$ in  $L^2(\Omega)$, for every $n\in\mathbb N,\ s>0$. For $k\ge1$,  we denote by $ \text{Dom}(\mathcal L_{\Omega,2}^k)$ the subset of $ L^2(\Omega)$ such that $\|f\|_{\mathcal L_{\Omega,2}^k}:=\left(\sum_{n=1}^\infty \lambda_n^{2k}\langle f,\psi_n\rangle^2\right)^{1/2}<\infty$. Moreover, $P^\Omega$ on $C_{\partial\Omega}(\Omega)$ has the same set of eigenvalues and eigenfunctions as $P^\Omega$ on $L^2(\Omega)$.
\end{enumerate}
\end{proposition}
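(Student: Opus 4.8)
The plan is to realise $P^\Omega$ as the sub-Feller semigroup of the stable process $X^{\cdot,\alpha}$ killed on exiting $\Omega$, to identify its generator with $\flap_\Omega$ pointwise in the interior, and then to deduce the duality in (ii) and the spectral picture in (iii) by approximation and by the symmetry of the killed transition kernel. I would start from the standard facts that $X^{\cdot,\alpha}$ is a Feller process on $\mathbb R^d$ with smooth symmetric transition density $p_s$, free semigroup $P_s$, and generator $-(-\Delta)^{\frac\alpha2}$ having the Schwartz class as a core (see \cite{Bogdan}). The operators $P^\Omega_s f(x)=\mathbf E[f(X^{x,\alpha}(s))\mathbf 1_{\{s<\tau_\Omega(x)\}}]$ manifestly form a positivity-preserving sub-Markov semigroup; Hunt's formula $p^\Omega_s(x,y)=p_s(x-y)-\mathbf E_x[p_{s-\tau_\Omega}(X^{x,\alpha}(\tau_\Omega)-y)\mathbf 1_{\{\tau_\Omega<s\}}]$ yields joint continuity of $p^\Omega_s$ and, by interior regularity for the fractional heat equation, smoothness of $x\mapsto p^\Omega_s(x,y)$ inside $\Omega$, hence continuity of $P^\Omega_s f$ inside $\Omega$. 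The hypothesis on $\Omega$ enters precisely through the fact that every boundary point of a regular set is regular for $\Omega^c$ with respect to $X^{\cdot,\alpha}$ (a finite right circular cone is non-polar for the stable process), so $\mathbf P(\tau_\Omega(x)>s)\to0$ as $x\to z\in\partial\Omega$ and therefore $P^\Omega_s f\in C_{\partial\Omega}(\Omega)$; see \cite[end of Section 4]{MeerChen} and \cite{Bogdan}. Strong continuity follows from right-continuity of paths together with $\tau_\Omega(x)>0$ a.s.\ for $x\in\Omega$ and dominated convergence, producing the sub-Feller semigroup $P^\Omega$ with generator $(\mathcal L_\Omega,\text{Dom}(\mathcal L_\Omega))$.

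For the approximation statement, given $u\in\text{Dom}(\mathcal L_\Omega)$ I would take $u_n:=P^\Omega_{1/n}u$. Then $u_n\to u$ and $\mathcal L_\Omega u_n=P^\Omega_{1/n}\mathcal L_\Omega u\to\mathcal L_\Omega u$ uniformly on $\overline{\Omega}$ by strong continuity, $u_n\in C_{\partial\Omega}(\Omega)$ by the Feller property, and $u_n\in C^\infty(\Omega)\subset C^2(\Omega)$ by interior smoothness of $p^\Omega_{1/n}$ together with differentiation under the integral ($\Omega$ being bounded). It then remains to show that any $v\in\text{Dom}(\mathcal L_\Omega)$ which is $C^2$ near an interior point $x$ satisfies $\mathcal L_\Omega v(x)=\flap_\Omega v(x)$: writing $\tilde v=v\mathbf 1_\Omega$ (continuous on $\mathbb R^d$, vanishing at infinity) and $P^\Omega_s v(x)=P_s\tilde v(x)-\mathbf E_x[\tilde v(X^{x,\alpha}(s))\mathbf 1_{\{\tau_\Omega\le s\}}]$, the pointwise $s\downarrow0$ derivative of the first term is the principal-value L\'evy integral of $\tilde v$ at $x$, which by Definition \ref{def_operators} equals $\flap_\Omega v(x)$, while the killed contribution is $o(s)$ because $v$ vanishes on $\partial\Omega$ and, from a fixed interior point, $X^{\cdot,\alpha}$ needs a macroscopic jump to leave $\Omega$ and a further macroscopic displacement to return to any fixed compact subset, so the relevant probability is $O(s^2)$ away from $\partial\Omega$ and $o(s)$ near it. Applying this with $v=u_n$ gives $\flap_\Omega u_n=\mathcal L_\Omega u_n$ on $\Omega$, hence $\flap_\Omega u_n\to\mathcal L_\Omega u$ uniformly on compact subsets of $\Omega$, as required.

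Part (ii) then follows by approximation: with the $u_n$ above it suffices to prove $\int_\Omega\flap_\Omega u_n\,\varphi\,dx=\int_\Omega u_n\,\flap_\Omega\varphi\,dx$ for $\varphi\in C_c^2(\Omega)$ and pass to the limit, using uniform convergence of $u_n$, boundedness of $\flap_\Omega\varphi$ on $\Omega$, and uniform convergence of $\flap_\Omega u_n$ on $\text{supp}\,\varphi$; the identity for fixed $n$ is the usual symmetrisation of the nonlocal form (extend both functions by zero, so that $\flap_\Omega f(x)=\int_{\mathbb R^d}(\tilde f(y)-\tilde f(x))\,c_{\alpha,d}|x-y|^{-d-\alpha}\,dy$ for $x\in\Omega$, then use symmetry of the kernel and compact support of $\varphi$ to apply Fubini and obtain $-\tfrac12\iint(\tilde f(y)-\tilde f(x))(\tilde g(y)-\tilde g(x))\,c_{\alpha,d}|x-y|^{-d-\alpha}\,dy\,dx$, which is symmetric in $f,g$). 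For part (iii), symmetry of $p_s$ and Hunt's formula give $p^\Omega_s(x,y)=p^\Omega_s(y,x)$, so $P^\Omega$ is symmetric sub-Markov and, being uniformly bounded and strongly continuous on the dense subspace $C_{\partial\Omega}(\Omega)\subset L^2(\Omega)$, extends to a strongly continuous contraction semigroup on $L^2(\Omega)$ with self-adjoint, non-positive generator $\mathcal L_{\Omega,2}$; since $\Omega$ is bounded and $0\le p^\Omega_s(x,y)\le p_s(x-y)\in L^\infty$ for $s>0$, each $P^\Omega_s$ is Hilbert--Schmidt on $L^2(\Omega)$, hence compact, so the spectral theorem yields an orthonormal basis $\{\psi_n\}_{n\in\mathbb N}$ of $L^2(\Omega)$ with $P^\Omega_s\psi_n=e^{-\lambda_n s}\psi_n$, $\lambda_n\ge0$, $\lambda_n\to\infty$; strict positivity of $p^\Omega_s$ on $\Omega\times\Omega$ (valid for all $s>0$ regardless of connectedness of $\Omega$, since the stable process jumps between components) gives irreducibility, hence simplicity of $\lambda_1$ with a positive eigenfunction by Perron--Frobenius, while $\sup_{x\in\Omega}\mathbf E[\tau_\Omega(x)]<\infty$ forces $\|P^\Omega_s\|\to0$ and so $\lambda_1>0$. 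The spaces $\text{Dom}(\mathcal L_{\Omega,2}^k)$ are by definition the domains of the $k$-th powers read off this spectral decomposition, and for $s>0$ the identity $\psi_n=e^{\lambda_n s}\int_\Omega p^\Omega_s(\cdot,y)\psi_n(y)\,dy$ shows $\psi_n$ has a representative in $C_{\partial\Omega}(\Omega)$ and $P^\Omega_s\psi_n=e^{-\lambda_n s}\psi_n$ pointwise on $\overline{\Omega}$, while conversely any real eigenfunction of $P^\Omega$ on $C_{\partial\Omega}(\Omega)$ lies in $L^2(\Omega)$ and is thus a combination of the $\psi_n$; so the two eigensystems agree.

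The main obstacle is part (i): the continuity of $P^\Omega_s f$ up to $\partial\Omega$, where the regularity of $\Omega$ is genuinely used via non-polarity of exterior cones for the stable process, together with the pointwise identification $\mathcal L_\Omega=\flap_\Omega$ in the interior, which rests on the two-jump estimate controlling the killed part of the semigroup. Both are essentially known facts about stable processes in regular domains, so the real work is in assembling and citing them correctly; parts (ii) and (iii) are then routine consequences of the approximation in (i) and of Hilbert-space spectral theory.
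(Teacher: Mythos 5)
Your proposal is correct, but it is doing substantially more work than the paper, which disposes of (i) and (iii) almost entirely by citation (\cite[Lemma 2.2 and Theorem 2.7]{MeerB} for the semigroup and the approximating sequence, \cite[Section 4]{MeerChen} for the heat-kernel continuity and the spectral data) and proves (ii) exactly as you do: first the self-adjointness identity (\ref{dual_dfl}) for $u\in C_{\partial\Omega}(\Omega)\cap C^2(\Omega)$ and $\varphi\in C^2_c(\Omega)$, then a passage to the limit along the approximating sequence from (i). What you add is a self-contained construction of precisely the facts the paper outsources: the Hunt decomposition and the regularity of boundary points of a regular set (exterior cones being non-polar for the stable process) to get the sub-Feller property on $C_{\partial\Omega}(\Omega)$; the choice $u_n=P^\Omega_{1/n}u$ together with interior smoothing of the killed kernel and the two-jump estimate identifying $\mathcal L_\Omega$ with $\flap_\Omega$ of Definition \ref{def_operators} at interior points where the function is $C^2$ (this is exactly the content of the cited results in \cite{MeerB}, and is the one step whose justification is genuinely nontrivial — the interior $C^2$-regularity of $x\mapsto p^\Omega_s(x,y)$ rests on interior estimates for the fractional heat equation rather than directly on Hunt's formula); and the Hilbert--Schmidt/Perron--Frobenius route to the spectral statement in (iii), including the correct observation that simplicity of $\lambda_1$ survives disconnectedness of $\Omega$ because the jump kernel is everywhere positive for $\alpha\in(0,2)$. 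One small caution in (ii): the symmetrised form $-\tfrac12\iint(\tilde f(y)-\tilde f(x))(\tilde g(y)-\tilde g(x))c_{\alpha,d}|x-y|^{-d-\alpha}\,dy\,dx$ is not absolutely convergent for $\alpha\ge1$ if $f$ is merely bounded; you should note explicitly that since $\varphi$ forces at least one of the two variables into a compact $K\subset\Omega$, and $u_n$ is locally Lipschitz there, the integrand is $O(|x-y|^{2-d-\alpha})$ near the diagonal, which restores absolute convergence and legitimises Fubini. With that remark supplied, your argument buys a proof that does not lean on \cite{MeerB}, at the cost of re-deriving known potential-theoretic facts; the paper's version buys brevity at the cost of opacity about where the regularity of $\Omega$ and the interior smoothing actually enter.
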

\begin{proof}
(i) The first two statements are a consequence of   \cite[Lemma 2.2 and Theorem 2.7]{MeerB}. The last statement follows by the strong Markov property along with joint continuity of the transition densities of $X^{x,\alpha}$ (see for example \cite[Section 4]{MeerChen}).

(ii) The operator $\flap_\Omega$ is self-adjoint in the sense that 
	\begin{equation}
	\int_\Omega \flap_\Omega u \varphi\, dx=	\int_\Omega u \flap_\Omega  \varphi\, dx,
	\label{dual_dfl}
	\end{equation}
		if $\varphi\in C_c^2(\Omega)$ and $u\in C_{\partial\Omega}(\Omega)\cap C^2(\Omega)$. Now use the approximating sequence from part (i) of the current proposition to conclude.

(iii) These results can be found in \cite[Section 4]{MeerChen} and references therein.

\end{proof}

In the next lemma we construct three sub-Feller semigroups by combining in space-time the sub-Feller semigroups defined so far. We combine them in a way that allows us to describe the newly constructed space-time generator as the closure of the sum of the time and space generators. This is how we give meaning to the boundary value problem viewpoint formally presented in (\ref{bvpp}).
\begin{lemma}\label{thm_semigroups} Consider the four tuples
 \begin{align*}
&(P^{\beta,\infty}, C_\infty((-\infty,T]), \mathcal L_\beta^\infty, \text{Dom}(\mathcal L_\beta^\infty)), &(&P^\beta, C([0,T]), \mathcal L_\beta, \text{Dom}(\mathcal L_\beta)),\\
&(P^{\beta,\text{kill}}, C_{0}([0,T]), \mathcal L_\beta^{\text{kill}}, \text{Dom}(\mathcal L_\beta^{\text{kill}})), &(&P^\Omega, C_{\partial\Omega}(\Omega), \mathcal L_\Omega, \text{Dom}(\mathcal L_\Omega)),
\end{align*}
defined in Proposition \ref{prop_csg},  Proposition \ref{prop_csg}-(i), Proposition \ref{prop_csg}-(ii) and Proposition \ref{prop_fl}-(i), respectively.  Let $\mathcal C_\beta^{\infty}$, $\mathcal C_\beta $, $\mathcal C_\beta^{\text{kill}} $ and $\mathcal C_\Omega $ be invariant cores for $(\mathcal L_\beta^\infty, \text{Dom}(\mathcal L_\beta^\infty))$, $(\mathcal L_\beta, \text{Dom}(\mathcal L_\beta))$, $(\mathcal L_\beta^{\text{kill}}, \text{Dom}(\mathcal L_\beta^{\text{kill}}))$ and $(\mathcal L_\Omega, \text{Dom}(\mathcal L_\Omega))$, respectively. 
\begin{enumerate}[(i)]
	\item 
Then $P^{\beta,\Omega}=\{P^\beta_sP^\Omega_s\}_{s\ge 0}$ is a sub-Feller semigroup on $C_{\partial\Omega}([0,T]\times\Omega)$. The generator $(\mathcal L_{\beta,\Omega},\text{Dom}(\mathcal L_{\beta,\Omega}))$ of $P^{\beta,\Omega}$ is the closure of
$$
(\mathcal L_\beta+\mathcal L_\Omega, \text{Span}\left\{\mathcal C_\beta\cdot \mathcal C_\Omega\right\} )\quad\text{in}\quad C_{\partial\Omega}([0,T]\times\Omega),
$$
 where $P^\beta$ and $\mathcal L_\beta$ act on the $[0,T]$-variable, and $P^\Omega$ and $\mathcal L_\Omega$ act on the $\Omega$-variable.

 \item 
Then $P^{\beta,\Omega,\text{kill}}=\{P^{\beta,\text{kill}}_sP^\Omega_s\}_{s\ge 0}$ is a sub-Feller semigroup on $C_{0,\partial\Omega}([0,T]\times\Omega)$. The generator $(\mathcal L_{\beta,\Omega}^{\text{kill}},\text{Dom}(\mathcal L_{\beta,\Omega}^{\text{kill}}))$ of $P^{\beta,\Omega,\text{kill}}$ is the closure of
$$
(\mathcal L_\beta^{\text{kill}}+\mathcal L_\Omega, \text{Span}\{\mathcal C_\beta^{\text{kill}}\cdot \mathcal C_\Omega\} )\quad\text{in}\quad C_{0,\partial\Omega}([0,T]\times\Omega),
$$
 where $P^{\beta,\text{kill}}$ and $\mathcal L_\beta^{\text{kill}}$ act on the $[0,T]$-variable, and $P^\Omega$ and $\mathcal L_\Omega$ act on the $\Omega$-variable. 

 \item 
Then $P^{\beta,\Omega,\infty}=\{P^{\beta,\infty}_sP^\Omega_s\}_{s\ge 0}$ is a sub-Feller semigroup on $C_{\infty,\partial\Omega}((-\infty,T]\times\Omega)$. The generator $(\mathcal L_{\beta,\Omega}^{\infty},\text{Dom}(\mathcal L_{\beta,\Omega}^{\infty}))$ of $P^{\beta,\Omega,\infty}$ is the closure of
$$
(\mathcal L_\beta^{\infty}+\mathcal L_\Omega, \text{Span}\{\mathcal C_\beta^{\infty}\cdot \mathcal C_\Omega\} )\quad\text{in}\quad C_{\infty,\partial\Omega}((-\infty,T]\times\Omega),
$$
 where $P^{\beta,\infty}$ and $\mathcal L_\beta^{\infty}$ act on the $(-\infty,T]$-variable, and $P^\Omega$ and $\mathcal L_\Omega$ act on the $\Omega$-variable. 

\item It holds that $P_s^{\beta,\Omega}=P_s^{\beta,\Omega,\text{kill}}$ $\text{on }C_{0,\partial\Omega}([0,T]\times \Omega),$ $ 
\mathcal L_{\beta,\Omega}=\mathcal L_{\beta,\Omega}^{\text{kill}}$ $\text{on }\text{Dom}(\mathcal L_{\beta,\Omega}^{\text{kill}}),$ and $
\text{Dom}(\mathcal L_{\beta,\Omega}^{\text{kill}})=\text{Dom}(\mathcal L_{\beta,\Omega})\cap \{f(0)=0\}$.
\end{enumerate}
\end{lemma}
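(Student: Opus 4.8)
The plan is to prove the four tuples yield sub-Feller semigroups on the stated product spaces by a general tensor-product construction, then identify the generators as closures of the sums. First I would establish a generic lemma: if $(P_s)$ and $(Q_s)$ are sub-Feller semigroups on $C(K_1)$ and $C(K_2)$ (or the analogous spaces of continuous functions vanishing at infinity / on a boundary), with invariant cores $\mathcal C_1, \mathcal C_2$, then $R_s := P_s \otimes Q_s$ (acting on functions of $(t,x)$ by freezing one variable and applying the corresponding semigroup) is a sub-Feller semigroup on the appropriate space of continuous functions on $K_1 \times K_2$, and $\mathrm{Span}\{\mathcal C_1 \cdot \mathcal C_2\}$ is an invariant core for its generator, on which the generator acts as $\mathcal L_1 + \mathcal L_2$. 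The semigroup property $R_{s+r} = R_s R_r$ follows because $P$ and $Q$ act on different variables and hence commute; contraction and positivity are immediate from those of $P,Q$; strong continuity on the product space follows by a standard $3\varepsilon$-argument using density of $\mathrm{Span}\{C(K_1)\cdot C(K_2)\}$ in $C(K_1\times K_2)$ (Stone–Weierstrass) together with the uniform boundedness of the operators. For parts (i)--(iii) I would then simply invoke this lemma with the concrete tuples from Propositions \ref{prop_csg} and \ref{prop_fl}, checking in each case that the relevant function space is genuinely the closure of $\mathrm{Span}\{C(K_1)\cdot C(K_2)\}$ intersected with the boundary constraints (e.g.\ $C_{0,\partial\Omega}([0,T]\times\Omega)$ is the closure of $\mathrm{Span}\{C_0([0,T])\cdot C_{\partial\Omega}(\Omega)\}$), and that $\mathcal C_\beta\cdot\mathcal C_\Omega$ etc.\ indeed lie in the domain with the sum acting correctly (Leibniz is not an issue since the two operators differentiate in disjoint variables, so $(\mathcal L_\beta+\mathcal L_\Omega)(fg) = (\mathcal L_\beta f)g + f(\mathcal L_\Omega g)$ literally on product functions).

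The main obstacle is the core/closure identification: showing that $\mathrm{Span}\{\mathcal C_1\cdot\mathcal C_2\}$ is a \emph{core}, i.e.\ that the closure of $(\mathcal L_1+\mathcal L_2)$ restricted to it is all of $(\mathcal L_{1,2},\mathrm{Dom}(\mathcal L_{1,2}))$. The clean route is to verify the hypotheses of \cite[Lemma 1.34]{Schilling} (already used in the proof of Proposition \ref{prop_csg}): the span is dense in the product space, contained in the generator domain, and invariant under $R_s$. Density follows from Stone–Weierstrass plus the density of each $\mathcal C_i$ in $C(K_i)$. Invariance is the key point and follows because $R_s(fg) = (P_s f)(Q_s g)$ for product functions $f\otimes g$, and $P_s f \in \mathcal C_1$, $Q_s g \in \mathcal C_2$ by invariance of the individual cores; linearity extends this to the span. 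Containment in the domain: for $f\in\mathcal C_1,\ g\in\mathcal C_2$ one computes $s^{-1}(R_s(fg)-fg) = s^{-1}(P_sf - f)Q_sg + f\, s^{-1}(Q_sg - g)$, and letting $s\downarrow 0$ both terms converge uniformly (using $Q_sg\to g$ uniformly and $\sup_s\|Q_s\|\le 1$ for the first term) to $(\mathcal L_1 f)g + f(\mathcal L_2 g)$.

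For part (iv) I would argue exactly as in the proof of Proposition \ref{prop_csg}-(ii). The identity $P_s^{\beta,\Omega} = P_s^{\beta,\Omega,\mathrm{kill}}$ on $C_{0,\partial\Omega}([0,T]\times\Omega)$ is inherited variable-by-variable from $P_s^\beta = P_s^{\beta,\mathrm{kill}}$ on $C_0([0,T])$ (Proposition \ref{prop_csg}-(ii)): for $h$ with $h(0,\cdot)=0$, applying $P^\beta_s$ in the time variable agrees with $P^{\beta,\mathrm{kill}}_s$ since for each fixed $x$ the function $t\mapsto h(t,x)$ lies in $C_0([0,T])$, and $P^\Omega_s$ preserves the vanishing at $t=0$. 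For the domain identification, given $f\in\mathrm{Dom}(\mathcal L_{\beta,\Omega})$ with $f(0,\cdot)=0$, pick $\lambda>0$ and write $f = (\lambda - \mathcal L_{\beta,\Omega})^{-1}h = \int_0^\infty e^{-\lambda s}P^{\beta,\Omega}_s h\,ds$ for some $h\in C_{\partial\Omega}([0,T]\times\Omega)$; evaluating at $t=0$ and using $P^{\beta,\Omega}_s h(0,x) = (\lambda)^{-1}$-weighted\dots more precisely $P^\beta_s$ fixes the value at $0$ via $P^\beta_s h(0,x) = h(0,x)$ shows $\lambda^{-1}h(0,\cdot) = f(0,\cdot) = 0$, hence $h\in C_{0,\partial\Omega}([0,T]\times\Omega)$, so $f$ is in the range of the killed resolvent, i.e.\ $f\in\mathrm{Dom}(\mathcal L_{\beta,\Omega}^{\mathrm{kill}})$; the reverse inclusion and the equality of the operators on the common domain follow by equating resolvent equations, exactly as in Proposition \ref{prop_csg}-(ii). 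I expect parts (i)--(iv) to be essentially bookkeeping once the tensor-core lemma is in place; the only genuine care needed is matching the boundary/decay constraints of the various $C$-spaces with the Stone–Weierstrass density argument.
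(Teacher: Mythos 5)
Your proposal follows essentially the same route as the paper: commuting contraction semigroups give the sub-Feller property, the generator is computed on product functions by the same two-term decomposition and then extended by linearity, the core is identified via \cite[Lemma 1.34]{Schilling} after a Stone--Weierstrass density argument (this density step, adapted to the boundary constraints and to the fact that $C_c^\infty(\Omega)\not\subset\text{Dom}(\mathcal L_\Omega)$, is where the paper's appendix does most of its actual work, via cut-offs and a two-stage approximation), and part (iv) is handled by the same resolvent argument. One small correction in (iv): evaluating the resolvent representation at $t=0$ gives $f(0,\cdot)=(\lambda-\mathcal L_\Omega)^{-1}\bigl(h(0,\cdot)\bigr)$, not $\lambda^{-1}h(0,\cdot)$, because $P^\Omega_s$ still acts nontrivially in the space variable; the desired conclusion $h(0,\cdot)=0$ then follows from injectivity of the resolvent of $P^\Omega$ rather than directly.
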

\begin{proof}
The proofs of (i), (ii) and (iii) can be found in Appendix \ref{proof-i-ii-iii}.

(iv) The first claim is an immediate consequence of $P^{\beta,\text{kill}}=P^\beta$ on $C_{0}([0,T])$.
The second claim follows from the third by considering a resolvent equation. To prove the third claim, we show the equivalent statement
\[
\text{Dom}(\mathcal L_{\beta,\Omega}^{\text{kill}})\subset \text{Dom}(\mathcal L_{\beta,\Omega}),\quad\text{and}\quad\text{if }u\in \text{Dom}(\mathcal L_{\beta,\Omega}),\text{ then }u-u(0)\in \text{Dom}(\mathcal L_{\beta,\Omega}^{\text{kill}}).
\]
The first inclusion is immediate using $P_s^{\beta,\Omega}=P_s^{\beta,\Omega,\text{kill}}, \text{ on }C_{0,\partial\Omega}([0,T]\times \Omega)$. For the second part, let $u\in \text{Dom}(\mathcal L_{\beta,\Omega})$ and consider its resolvent representation for some $\lambda>0$ and $g\in C_{\partial\Omega}([0,T]\times \Omega)$. Then
\[
u(0,x)=\int_0^\infty e^{-\lambda s}P^\beta_sP^\Omega_sg(0,x)\,ds=\int_0^\infty e^{-\lambda s}P^\beta_sP^\Omega_s(g(0))(t,x)\,ds,
\] 
as $P^\beta_sg(0,x)=P^\beta_s(g(0))(t,x)$. Now consider
\begin{align*}
u(t,x)-u(0,x)&=\int_0^\infty e^{-\lambda s}P^\Omega_sP^\beta_s(g-g(0))(t,x)\,ds\\
&=\int_0^\infty e^{-\lambda s}P^\Omega_sP^{\beta,\text{kill}}_s(g-g(0))(t,x)\,ds\in \text{Dom}(\mathcal L_{\beta,\Omega}^{\text{kill}}),
\end{align*}
where we use the fact that  $P^{\beta,\text{kill}}=P^{\beta}$ on $C_{0,\partial\Omega}([0,T]\times \Omega)$ and that $g-g(0)\in C_{0,\partial\Omega}([0,T]\times \Omega)$.
\end{proof}

\begin{remark}
Note that 
\[
(-\mathcal L_{\beta,\Omega}^{\text{kill}})^{-1}g(t,x)=\int_0^\infty P_s^{\beta,\Omega}g(t,x)\, ds=\mathbf E\left[\int_0^{\tau_0(t) \wedge \tau_\Omega(x)}g\left(-X^{t,\beta}(s),X^{x,\alpha}(s)\right)ds\right],
\]
for $g\in C_{0,\partial\Omega}([0,T]\times \Omega)$. Also, from now on we might write $\tau_{t,x}$ for $\tau_0(t) \wedge \tau_\Omega(x)$.
\end{remark}

\section{Stochastic weak solution for problem (\ref{postRL})}\label{sec_ws}

\subsection{Definition of weak solution}
Define the operator 
\begin{equation*}
-D^{\beta,*}_0\varphi (s):=  \partial_sI^{1-\beta}_{T}\varphi(s) +\delta_0(ds) I^{1-\beta}_{T}\varphi(0),
\end{equation*}
where $\delta_0$ is the delta-measure at $0$, and the Riemann-Liouville integral $I^{1-\beta}_{T}$ is defined as
\[
I^{1-\beta}_{T}f(s):=\int_s^Tf(t)\,\frac{(t-s)^{-\beta}dt}{\Gamma(1-\beta)},\quad s<T.
\]
 In the current section only the pairing $\langle\cdot ,\cdot\rangle$ is defined as
\begin{equation*}
\langle f, g\rangle := \int_0^T\int_\Omega f(t,x)g(t,x) \,dx\,dt.
\end{equation*}
\begin{definition}\label{def_weaksolution}
Let $f\in L^\infty((0,T)\times\Omega)$ and $\phi_0\in C_{\partial\Omega}(\Omega)$. A function $u\in L^{2}((0,T)\times \Omega)$ is said to be a \emph{weak solution to problem }(\ref{postRL}) if
\begin{equation}
\langle u, (-D^{\beta,*}_0+\flap_\Omega)\varphi\rangle =\langle -f, \varphi\rangle,\quad\text{for every }\varphi\in C^{1,2}_c((0,T)\times \Omega),
\label{wsdual}
\end{equation}
and $u(t)\to \phi_0$ a.e. as $t\downarrow 0$.

\end{definition}

The next proposition motivates Definition \ref{def_weaksolution}.
\begin{proposition}\label{prop_Caputodual}
Let $\varphi\in C_c^1((0,T))$ and $u\in C([0,T]) \cap C^1((0,T))$ such that $u'\in L^1((0,T))$. Then
\begin{equation*}
\int_0^TD^{\beta}_0u(t)\varphi(t)\,dt =- \int_0^Tu(t)\left(\partial_t I^{1-\beta}_{T}\varphi(t)\right)\,dt -u(0) I^{1-\beta}_{T}\varphi(0).
\end{equation*}
\end{proposition}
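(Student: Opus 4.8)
The plan is to prove the identity by a Fubini argument combined with a single integration by parts in time, treating carefully the boundary terms that arise because $u$ need not be differentiable at $0$. First I would use Proposition~\ref{prop_csg}-(iv): since $u\in C([0,T])\cap C^1((0,T))$ with $u'\in L^1((0,T))$, we have $D^\beta_0 u(t)=\int_0^t u'(r)(t-r)^{-\beta}\Gamma(1-\beta)^{-1}\,dr$ for $0<t<T$. Substituting this into the left-hand side gives a double integral
\[
\int_0^T\varphi(t)\int_0^t u'(r)\,\frac{(t-r)^{-\beta}}{\Gamma(1-\beta)}\,dr\,dt.
\]
Since $\varphi\in C_c^1((0,T))$ is bounded with support compactly inside $(0,T)$, $u'\in L^1$, and the kernel $(t-r)^{-\beta}$ is integrable over the triangle $\{0<r<t<T\}$, the integrand is absolutely integrable, so Fubini applies and I can swap the order of integration to obtain
\[
\int_0^T u'(r)\int_r^T \varphi(t)\,\frac{(t-r)^{-\beta}}{\Gamma(1-\beta)}\,dt\,dr=\int_0^T u'(r)\,I^{1-\beta}_T\varphi(r)\,dr.
\]

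Next I would integrate by parts in $r$ on $[0,T]$. The function $r\mapsto I^{1-\beta}_T\varphi(r)$ is continuous on $[0,T]$ and, because $\varphi$ vanishes near $T$, it vanishes at $r=T$; moreover it is differentiable on $(0,T)$ with derivative $\partial_r I^{1-\beta}_T\varphi(r)$, which one checks is bounded (again using that $\varphi$ has compact support away from the endpoints, so no singularity from the kernel is seen). Integration by parts — justified since $u\in C([0,T])$, $u'\in L^1((0,T))$, and $I^{1-\beta}_T\varphi$ is Lipschitz — yields
\[
\int_0^T u'(r)\,I^{1-\beta}_T\varphi(r)\,dr = \Big[u(r)\,I^{1-\beta}_T\varphi(r)\Big]_0^T - \int_0^T u(r)\,\partial_r I^{1-\beta}_T\varphi(r)\,dr.
\]
The boundary term at $T$ vanishes since $I^{1-\beta}_T\varphi(T)=0$, and the boundary term at $0$ contributes $-u(0)\,I^{1-\beta}_T\varphi(0)$, giving exactly the claimed formula.

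The main obstacle is the justification of the two limiting/regularity steps near the endpoint $r=0$ (and, to a lesser extent, near $t=T$): namely, that Fubini is legitimate despite the non-integrable-looking factor $u'$, and that the integration by parts holds even though $u$ is only assumed continuous — not $C^1$ — at $0$. Both are handled by the hypotheses: the compact support of $\varphi$ inside $(0,T)$ keeps $I^{1-\beta}_T\varphi$ and its derivative bounded and regular everywhere on $[0,T]$ (the kernel singularity at $t=r$ is never probed because $\varphi$ is smooth and the kernel is integrable), so all the weight of the argument sits on the integrability of $u'$, which is assumed. I would make these points explicit by approximating, if needed, $\int_0^T$ by $\int_\varepsilon^T$ and letting $\varepsilon\downarrow 0$, using continuity of $u$ at $0$ to identify the limit of the boundary term as $u(0)I^{1-\beta}_T\varphi(0)$.
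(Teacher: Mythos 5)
Your proposal is correct and follows essentially the same route as the paper: apply Proposition \ref{prop_csg}-(iv), swap the order of integration by Fubini to produce $\int_0^T u'(s)I^{1-\beta}_T\varphi(s)\,ds$, and integrate by parts to pick up the boundary term $-u(0)I^{1-\beta}_T\varphi(0)$. Your additional care with the justification of Fubini and of the integration by parts near $r=0$ (via the boundedness of $\partial_s I^{1-\beta}_T\varphi$ coming from the compact support of $\varphi$) is sound and merely makes explicit what the paper leaves implicit.
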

\begin{proof}
Using Proposition \ref{prop_csg}-(iv), Fubini's Theorem and  integration by parts, compute
\begin{align*}
\int_0^TD^\beta_0u(t)\varphi(t)\,dt&=\int_{\mathbb R} \int_{\mathbb R} u'(s)\frac{(t-s)^{-\beta}}{\Gamma(1-\beta)}\varphi(t)\mathbf1_{\{0< t< T\}}\mathbf1_{\{0< s< t\}}\,ds\,dt\\
&=\int_{\mathbb R} u'(s) \mathbf1_{\{0< s<T\}}\left(\int_s^T\frac{(t-s)^{-\beta}}{\Gamma(1-\beta)}\varphi(t)\,dt\right)\,ds\\
&=\int_0^T u'(s) I^{1-\beta}_{T}\varphi(s)\,ds\\
&=-\int_0^Tu(s) \partial_sI^{1-\beta}_{T}\varphi(s)\,ds- u(0) I^{1-\beta}_{T}\varphi(0).
\end{align*}
\end{proof}

From Proposition \ref{prop_Caputodual} and the identity in (\ref{dual_dfl}), it is straightforward to prove the following lemma.
\begin{lemma}\label{lem_dualonsmooth}
Let  $\varphi\in C_c^{1,2}((0,T)\times \Omega)$ and $u\in C_{\partial\Omega}([0,T]\times \Omega) \cap C^{1,2}((0,T)\times \Omega)$ such that $\partial_tu\in L^1((0,T)\times\Omega)$. Then 
\begin{equation*}
\langle u,  (-D^{\beta,*}_0+\flap_\Omega)\varphi\rangle = \langle (-D^{\beta}_0+\flap_\Omega) u,  \varphi\rangle.
\end{equation*}
\end{lemma}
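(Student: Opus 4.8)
The plan is to combine Proposition \ref{prop_Caputodual} (the one-dimensional time-duality identity) with the spatial self-adjointness identity (\ref{dual_dfl}) via a Fubini argument. First I would fix $\varphi\in C_c^{1,2}((0,T)\times\Omega)$ and $u\in C_{\partial\Omega}([0,T]\times\Omega)\cap C^{1,2}((0,T)\times\Omega)$ with $\partial_t u\in L^1((0,T)\times\Omega)$, and unpack the definition of the bilinear form: $\langle u,(-D^{\beta,*}_0+\flap_\Omega)\varphi\rangle = -\langle u,\partial_t I^{1-\beta}_T\varphi\rangle - \int_\Omega u(0,x)I^{1-\beta}_T\varphi(0,x)\,dx + \langle u,\flap_\Omega\varphi\rangle$, where $\flap_\Omega$ acts in the $x$-variable.

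Next I would handle the two pieces separately. For the time-derivative piece, for each fixed $x\in\Omega$ the function $t\mapsto u(t,x)$ lies in $C([0,T])\cap C^1((0,T))$ with $u'(\cdot,x)\in L^1((0,T))$ (the latter for a.e.\ $x$ by Fubini applied to the hypothesis $\partial_t u\in L^1$), and $t\mapsto\varphi(t,x)\in C_c^1((0,T))$, so Proposition \ref{prop_Caputodual} gives
\[
-\int_0^T u(t,x)\,\partial_t I^{1-\beta}_T\varphi(t,x)\,dt - u(0,x)I^{1-\beta}_T\varphi(0,x) = \int_0^T D^\beta_0 u(t,x)\,\varphi(t,x)\,dt.
\]
Integrating this identity over $x\in\Omega$ and invoking Fubini (justified by $\partial_t u\in L^1$ together with boundedness of $u,\varphi$ and integrability of the Mittag-Leffler/Riemann-Liouville kernels on the compact time-support of $\varphi$) converts the first two terms of the bilinear form into $\langle D^\beta_0 u,\varphi\rangle$. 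For the spatial piece, for each fixed $t\in(0,T)$ the function $x\mapsto u(t,x)$ lies in $C_{\partial\Omega}(\Omega)\cap C^2(\Omega)$ and $x\mapsto\varphi(t,x)\in C_c^2(\Omega)$, so identity (\ref{dual_dfl}) gives $\int_\Omega u(t,x)\flap_\Omega\varphi(t,x)\,dx = \int_\Omega \flap_\Omega u(t,x)\,\varphi(t,x)\,dx$; integrating over $t$ and applying Fubini yields $\langle u,\flap_\Omega\varphi\rangle = \langle\flap_\Omega u,\varphi\rangle$. Adding the two contributions gives $\langle u,(-D^{\beta,*}_0+\flap_\Omega)\varphi\rangle = \langle(-D^\beta_0+\flap_\Omega)u,\varphi\rangle$, as claimed.

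The main obstacle is the careful justification of the Fubini interchanges and of the pointwise-in-$x$ applicability of Proposition \ref{prop_Caputodual}: one must check that $u'(\cdot,x)\in L^1((0,T))$ for a.e.\ $x$, that the double integral $\int\!\!\int |u'(s,x)|(t-s)^{-\beta}|\varphi(t,x)|\,ds\,dt\,dx$ is finite (using $\varphi$ compactly supported in time so $(t-s)^{-\beta}$ is integrable against the relevant region and $\varphi$ is bounded), and that $\flap_\Omega u(t,x)$ is integrable on $(0,T)\times\Omega$ so that the right-hand side makes sense. Since $\varphi$ has compact support in $(0,T)\times\Omega$ and $u$ is bounded with the stated regularity, all these are routine, and the lemma follows. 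I would simply remark that this is "straightforward from Proposition \ref{prop_Caputodual} and (\ref{dual_dfl})" and omit the elementary measure-theoretic bookkeeping.
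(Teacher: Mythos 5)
Your proposal is correct and follows exactly the route the paper intends: the paper gives no written proof of this lemma, stating only that it is ``straightforward to prove'' from Proposition \ref{prop_Caputodual} and the identity (\ref{dual_dfl}), which is precisely the slice-wise (fix $x$ for the time duality, fix $t$ for the spatial duality) plus Fubini argument you carry out. Your bookkeeping of the Fubini justifications and of the a.e.-in-$x$ applicability of Proposition \ref{prop_Caputodual} is the content the paper leaves implicit, and it is sound.
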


\subsection{Existence of a weak solution}

Following  \cite{HKT17}, we define two auxiliary notions of solution for problem (\ref{postRL}), starting from the abstract evolution equation
\begin{equation}
\mathcal L_{\beta,\Omega} u = -f \text{ on }(0,T]\times\overbar\Omega,\quad u=\phi_0 \text{ on }\{0\}\times\overbar\Omega,\quad u\in \text{Dom}(\mathcal L_{\beta,\Omega}).
\label{abstractee}
\end{equation}

\begin{definition}\label{def:sdog}
Let $f\in C_{\partial\Omega}([0,T]\times \Omega)$ and $\phi_0\in \text{Dom}(\mathcal L_\Omega)$ such that $f(0)=-\mathcal L_\Omega \phi_0$. We say that a function $u\in C_{\partial\Omega}([0,T]\times \Omega)$ is a \emph{solution in the domain of the generator to problem} (\ref{postRL}) if  $u$ satisfies  (\ref{abstractee}).
\end{definition}
The next solution concept for problem (\ref{postRL}) is defined as a pointwise approximation of solutions  in the domain of the generator $\{u_n\}_{n\in\mathbb N}$ such that the approximating forcing term $\{f_n\}_{n\in\mathbb N}$ satisfies a dominated convergence type of condition.
\begin{definition}\label{def_gs}
Let $f\in B([0,T]\times \overbar\Omega)$ and $\phi_0\in \text{Dom}(\mathcal L_\Omega)$. We say that a function $u\in B([0,T]\times \overbar\Omega)$ is a \emph{generalised solution  to problem} (\ref{postRL}) if 
$$
u = \lim_{n\to\infty}u_n \quad\text{pointwise},
$$ where each $u_n$ is the  solution in the domain of the generator for a corresponding forcing term $f_n\in C_{\partial\Omega}([0,T]\times \Omega)$ such that 
$$
f_n\to f\text{ a.e. on }(0,T]\times \Omega,\quad\sup_n \|f_n\|_\infty<\infty,\quad\text{and}\quad f_n(0)=-\mathcal L_\Omega \phi_0\text{ for each }n\in\mathbb N.
$$
\end{definition}
\begin{remark} Any generalised solution must satisfy the boundary conditions $u=0$ on $[0,T]\times\partial\Omega$ and $u=\phi_0$ on  $\{0\}\times\Omega$. 
\end{remark}

\begin{lemma}\label{lem_sdoggs} Let $\phi_0\in \text{Dom}(\mathcal L_\Omega)$. Then
\begin{enumerate}[(i)]
	\item If $f+\mathcal L_\Omega \phi_0\in C_{0,\partial\Omega}([0,T]\times \Omega)$, then there exists a unique  solution in the domain of the generator to problem (\ref{postRL}).
	\item If $f\in B([0,T]\times \overbar\Omega)$, then there exists a unique generalised  solution  to problem (\ref{postRL}).
	\item Both solutions in part (i) and (ii) allow the stochastic representation (\ref{SRpostRL}).
	\end{enumerate}
\end{lemma}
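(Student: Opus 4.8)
The plan is to recast the abstract equation (\ref{abstractee}) as a Poisson problem for the killed generator $\mathcal L_{\beta,\Omega}^{\text{kill}}$ by subtracting the time-constant extension of $\phi_0$, and to invert $\mathcal L_{\beta,\Omega}^{\text{kill}}$ by exploiting finiteness of the relevant mean hitting time. Two preliminary facts are needed. First, extending $\phi_0$ to $[0,T]\times\Omega$ as a function constant in time, one has $\phi_0\in\text{Dom}(\mathcal L_{\beta,\Omega})$ with $\mathcal L_{\beta,\Omega}\phi_0=\mathcal L_\Omega\phi_0$: choosing in Lemma \ref{thm_semigroups} the invariant core $\mathcal C_\beta=C^1([0,T])$ (which contains the constant $1$) and taking $h_n\in\mathcal C_\Omega$ with $h_n\to\phi_0$, $\mathcal L_\Omega h_n\to\mathcal L_\Omega\phi_0$, the products $1\cdot h_n\in\text{Span}\{\mathcal C_\beta\cdot\mathcal C_\Omega\}$ satisfy $(\mathcal L_\beta+\mathcal L_\Omega)(1\cdot h_n)=\mathcal L_\Omega h_n\to\mathcal L_\Omega\phi_0$ (since $\mathcal L_\beta 1=-D^\beta_0 1=0$), and closedness of $\mathcal L_{\beta,\Omega}$ gives the claim. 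Second, $-\mathcal L_{\beta,\Omega}^{\text{kill}}$ is a bijection onto $C_{0,\partial\Omega}([0,T]\times\Omega)$ with bounded inverse $R_0:=\int_0^\infty P^{\beta,\Omega}_s\,ds$: indeed $\int_0^\infty\|P^{\beta,\Omega,\text{kill}}_s\|\,ds\le\int_0^\infty\sup_{t\le T}\mathbf P(\tau_0(t)>s)\,ds\le\mathbf E[\tau_0(T)]=T^\beta/\Gamma(\beta+1)<\infty$ by (\ref{ident1}), so letting $\lambda\downarrow 0$ in $(\lambda-\mathcal L_{\beta,\Omega}^{\text{kill}})^{-1}$ and using closedness together with $\|\lambda(\lambda-\mathcal L_{\beta,\Omega}^{\text{kill}})^{-1}\|\to0$ identifies $R_0$ as the two-sided inverse, while the Remark gives $R_0g(t,x)=\mathbf E[\int_0^{\tau_{t,x}}g(-X^{t,\beta}(s),X^{x,\alpha}(s))\,ds]$.

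For part (i), put $\tilde f:=f+\mathcal L_\Omega\phi_0\in C_{0,\partial\Omega}([0,T]\times\Omega)$, $v:=R_0\tilde f$ and $u:=\phi_0+v$. Then $v\in\text{Dom}(\mathcal L_{\beta,\Omega}^{\text{kill}})\subset\text{Dom}(\mathcal L_{\beta,\Omega})$ by Lemma \ref{thm_semigroups}(iv), so $u\in\text{Dom}(\mathcal L_{\beta,\Omega})$, $u(0)=\phi_0$ (as $v(0)=0$), and $\mathcal L_{\beta,\Omega}u=\mathcal L_\Omega\phi_0+\mathcal L_{\beta,\Omega}^{\text{kill}}v=\mathcal L_\Omega\phi_0-\tilde f=-f$; thus $u$ solves (\ref{abstractee}). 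Uniqueness follows because the difference $w$ of two such solutions has $w(0)=0$ and $\mathcal L_{\beta,\Omega}w=0$ on $(0,T]\times\overbar\Omega$, hence on all of $[0,T]\times\overbar\Omega$ by continuity, so $w\in\text{Dom}(\mathcal L_{\beta,\Omega}^{\text{kill}})$ with $-\mathcal L_{\beta,\Omega}^{\text{kill}}w=0$ and therefore $w=R_0 0=0$. To obtain the representation (\ref{SRpostRL}) for this $u$, write $u=\phi_0+R_0f+R_0(\mathcal L_\Omega\phi_0)$, where $R_0f$ is exactly the second term of (\ref{SRpostRL}); it then suffices to show
\begin{equation*}
\phi_0(x)+\mathbf E\left[\int_0^{\tau_{t,x}}\mathcal L_\Omega\phi_0\big(X^{x,\alpha}(s)\big)\,ds\right]=\mathbf E\left[\phi_0\big(X^{x,\alpha}(\tau_0(t))\big)\mathbf 1_{\{\tau_0(t)<\tau_\Omega(x)\}}\right].
\end{equation*}
I would prove this by applying Dynkin's formula to the Markov process $Z=\{(-X^{t,\beta}_0(s),X^{x,\alpha}_\Omega(s))\}_{s\ge0}$ (generator $\mathcal L_{\beta,\Omega}$, cemetery convention) and the function $\phi_0\in\text{Dom}(\mathcal L_{\beta,\Omega})$, stopped at $\tau_{t,x}\wedge T'$: since $\tau_{t,x}\le\tau_0(t)<\infty$ a.s.\ with $\mathbf E[\tau_{t,x}]<\infty$, the limit $T'\to\infty$ (dominated convergence, bounds $\|\phi_0\|_\infty$ and $\|\mathcal L_\Omega\phi_0\|_\infty\,\tau_{t,x}$) yields $\mathbf E[\phi_0(Z_{\tau_{t,x}})]-\phi_0(x)=\mathbf E[\int_0^{\tau_{t,x}}\mathcal L_\Omega\phi_0(X^{x,\alpha}(s))\,ds]$, because on $\{s<\tau_{t,x}\}$ both coordinates are alive and $\mathcal L_{\beta,\Omega}\phi_0$ is time-independent; and $\phi_0(Z_{\tau_{t,x}})=\phi_0(X^{x,\alpha}(\tau_0(t)))\mathbf 1_{\{\tau_0(t)<\tau_\Omega(x)\}}$ since on $\{\tau_\Omega(x)<\tau_0(t)\}$ the space component has been sent to the cemetery where $\phi_0=0$, and $\{\tau_0(t)=\tau_\Omega(x)\}$ is null ($\tau_0(t)$ is atomless and independent of $X^{x,\alpha}$).

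For part (ii), and part (iii) for it, I would first build the approximating data: by Lusin's theorem and Tietze extension take continuous $g_n$ on $[0,T]\times\overbar\Omega$ with $\|g_n\|_\infty\le\|f\|_\infty$ and $g_n\to f$ a.e.\ (Borel--Cantelli); let $\chi_n\in C(\overbar\Omega)$ vanish on $\partial\Omega$ and equal $1$ off the $1/n$-neighbourhood of $\partial\Omega$, let $\eta_n\in C([0,T])$ satisfy $\eta_n(0)=1$ and $\eta_n\equiv0$ on $[1/n,T]$, and set $f_n:=-\eta_n(t)\mathcal L_\Omega\phi_0(x)+(1-\eta_n(t))\chi_n(x)g_n(t,x)$. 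Then $f_n\in C_{\partial\Omega}([0,T]\times\Omega)$, $f_n(0)=-\mathcal L_\Omega\phi_0$, $\sup_n\|f_n\|_\infty\le\|\mathcal L_\Omega\phi_0\|_\infty+\|f\|_\infty$, $f_n\to f$ a.e.\ on $(0,T]\times\Omega$ (since $\partial\Omega$ and $\{0\}$ are Lebesgue-null), and $f_n+\mathcal L_\Omega\phi_0\in C_{0,\partial\Omega}([0,T]\times\Omega)$, so part (i) supplies $u_n$ represented by (\ref{SRpostRL}) with forcing $f_n$. The occupation measure $A\mapsto\mathbf E[\int_0^{\tau_{t,x}}\mathbf 1_A(-X^{t,\beta}(s),X^{x,\alpha}(s))\,ds]$ is absolutely continuous with respect to Lebesgue measure on $(-\infty,t)\times\Omega$ — it is dominated by the product of the occupation density $(t-r)^{\beta-1}/\Gamma(\beta)$ from (\ref{ident3}) and the smooth laws of $X^{x,\alpha}(s)$ — so $f_n\to f$ a.e.-Lebesgue forces $f_n(-X^{t,\beta}(s),X^{x,\alpha}(s))\to f(\cdots)$ for a.e.\ $(\omega,s)$ on $\{s<\tau_{t,x}\}$; dominated convergence (the integrable bound $\sup_n\|f_n\|_\infty\,\tau_{t,x}$) then shows $u_n(t,x)$ converges pointwise to the right-hand side of (\ref{SRpostRL}) with forcing $f$. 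Hence that limit is a generalised solution, which establishes existence and part (iii) for part (ii); uniqueness is immediate, as any generalised solution comes with defining sequences $\{\hat f_n\}$ having the same a.e.-Lebesgue limit $f$ and a uniform bound, so the identical dominated-convergence computation forces its pointwise limit to coincide with the same stochastic representation.

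The step I expect to be the main obstacle is the Dynkin identification in part (iii) — matching the value of the killed space--time process $Z$ at the stopping time $\tau_{t,x}$ with the indicator-weighted boundary term, which forces some care with the cemetery convention and with nullity of $\{\tau_0(t)=\tau_\Omega(x)\}$ — together with the absolute-continuity bookkeeping for the occupation measure underpinning the a.e.-convergence passage in part (ii); by contrast, the invertibility of $\mathcal L_{\beta,\Omega}^{\text{kill}}$ at $0$ is routine once the mean-hitting-time bound $T^\beta/\Gamma(\beta+1)$ from (\ref{ident1}) is in hand.
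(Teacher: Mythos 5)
Your proposal is correct and follows essentially the same route as the paper: both reduce (\ref{abstractee}) to the killed Poisson problem via the decomposition $u=\phi_0+(-\mathcal L_{\beta,\Omega}^{\text{kill}})^{-1}(f+\mathcal L_\Omega\phi_0)$, invert $\mathcal L_{\beta,\Omega}^{\text{kill}}$ using $\mathbf E[\tau_0(T)]<\infty$ (the paper simply cites Dynkin's Theorem 1.1$'$ where you re-derive it), obtain the $\phi_0$-term of (\ref{SRpostRL}) by Dynkin's formula, and handle the generalised solution by an a.e.-approximating sequence plus dominated convergence through the absolutely continuous occupation measure. Your only additions — the explicit Lusin--Tietze construction of $f_n$ and the cemetery bookkeeping at $\tau_{t,x}$ — are elaborations of steps the paper leaves implicit, not a different argument.
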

\begin{proof}
(i) Observe that the potential $(-\mathcal L_{\beta,\Omega}^{\text{kill}})^{-1}$ maps $C_{0,\partial\Omega}([0,T]\times \Omega)$ to itself. This follows from $P^{\beta,\Omega,\text{kill}}_sg\in C_{0,\partial\Omega}([0,T]\times \Omega)$ for $g\in C_{0,\partial\Omega}([0,T]\times \Omega)$, $s\ge0$, and Dominated Convergence Theorem (DCT) with dominating function $G(s):=\|g\|_\infty\mathbf P[s<\tau_0(T)]$. Note that we use the first identity in (\ref{ident1}) to prove that $G\in L^1((0,\infty))$. The potential $(-\mathcal L_{\beta,\Omega}^{\text{kill}})^{-1}$  is also bounded by the inequality
\begin{equation*}
\left|(-\mathcal L_{\beta,\Omega}^{\text{kill}})^{-1}g(t,x)\right|\le \|g\|_\infty\mathbf E\left[\tau_0(T)\right],\quad g\in C_{0,\partial\Omega}([0,T]\times \Omega).
\end{equation*}
 It then follows by \cite[Theorem 1.1']{dynkin1965} that $\bar u:=(-\mathcal L_{\beta,\Omega}^{\text{kill}})^{-1}(f+\mathcal L_\Omega \phi_0)$ is the unique solution to the abstract evolution equation 
\begin{equation}
\mathcal L_{\beta,\Omega}^{\text{kill}} \bar u = -(f+\mathcal L_\Omega \phi_0) \text{ on }(0,T]\times\overbar\Omega,\quad \bar u=0 \text{ on }\{0\}\times\overbar\Omega,\quad\text{and  $\bar u\in \text{Dom}(\mathcal L_{\beta,\Omega}^{\text{kill}})$}.
\label{aaa}
\end{equation}
It is now enough to show that $\bar u$ satisfies (\ref{aaa}) if and only if $u=\bar u+\phi_0$ satisfies (\ref{abstractee}). For the `if' direction, let $u\in \text{Dom}(\mathcal L_{\beta,\Omega})$  satisfy (\ref{abstractee}). Note that $u(0)=\phi_0$. Then $\bar u:= u-\phi_0\in \text{Dom}(\mathcal L_{\beta,\Omega}^{\text{kill}})$, and $\mathcal L_{\beta,\Omega} \bar u=\mathcal L_{\beta,\Omega}^{\text{kill}} \bar u$, by Lemma \ref{thm_semigroups}-(iv). So we can compute 
\begin{align*}
 \mathcal L_{\beta,\Omega}^{\text{kill}}\bar u = \mathcal L_{\beta,\Omega}( u-\phi_0)= \mathcal L_{\beta,\Omega}u - \mathcal L_\Omega \phi_0=-f - \mathcal L_\Omega \phi_0,
\end{align*}
where we use 
$$
\mathcal L_{\beta,\Omega}1\phi_0=(\mathcal L_\beta+\mathcal L_\Omega)1\phi_0=\mathcal L_\Omega \phi_0,
$$
 from Lemma \ref{thm_semigroups}-(i) taking the invariant cores $\mathcal C_\beta =\text{Dom}(\mathcal L_\beta)$ and $\mathcal C_\Omega=\text{Dom}(\mathcal L_\Omega)$ (recalling that $\mathcal L_\beta1=0$). For the `only if' direction, let $\bar u$ satisfy (\ref{aaa}), and define $u:=\bar u +\phi_0$. Then with the same justifications as just above, compute
\begin{align*}
 \mathcal L_{\beta,\Omega} u =\mathcal  L_{\beta,\Omega}^{\text{kill}}\bar u+\mathcal L_{\beta,\Omega}\phi_0=-(f+\mathcal L_\Omega \phi_0)+\mathcal L_{\Omega}\phi_0=-f.
\end{align*}
It follows that $$
u = (-\mathcal L_{\beta,\Omega}^{\text{kill}})^{-1}(f+\mathcal L_\Omega \phi_0)+ \phi_0.
$$
(ii) Let $f\in B([0,T]\times\overbar\Omega)$. Then $f+\mathcal L_\Omega \phi_0\in B([0,T]\times\overbar\Omega)$. Now take a sequence $\{\tilde f_n\}_{n\mathbb N}\in C_{0,\partial\Omega}([0,T]\times \Omega) $ such that $\tilde f_n\to f+\mathcal L_\Omega \phi_0$ a.e., and $\sup_n\|\tilde f_n\|_\infty<\infty$. Define $f_n:=\tilde f_n -\mathcal L_\Omega \phi_0$ for each $n\in\mathbb N$ and note that $f_n\to f$ a.e., $\sup_n\|\tilde f_n\|_\infty<\infty$ and $f_n(0)=-\mathcal L_\Omega \phi_0$, as required by Definition \ref{def_gs}. Now, for each $f_n$ consider the stochastic representation of the respective solution in the domain  of the generator
\[
u_n(t,x) =\mathbf E\left[\int_0^{\tau_{t,x}} f_n\left(-X^{t,\beta}(s),X^{x,\alpha}(s)\right) ds\right]+\mathbf E\left[\int_0^{\tau_{t,x}}\mathcal L_\Omega \phi_0\left(X^{x,\alpha}(s)\right) ds\right]+\phi_0(x).
\]
 Fix $(t,x)\in (0,T]\times\Omega$. Using absolute continuity with respect of Lebesgue measure of the laws of $-X^{t,\beta}(s)$ and $X^{x,\alpha}_\Omega(s)$ for each $s>0$, and the bound $\mathbf E\left[{\tau_{t,x}}\right]\le \mathbf E\left[{\tau_0(t)}\right]<\infty$, we can apply DCT twice  to obtain as $n\to\infty$
\begin{align*}
\mathbf E\left[\int_0^{\tau_{t,x}} f_n\left(-X^{t,\beta}(s),X^{x,\alpha}(s)\right) ds\right]&= \int_0^\infty P_s^{\beta,\text{kill}}P_s^\Omega f_n(t,x)\,ds\\
&\to \int_0^\infty P_s^{\beta,\text{kill}}P_s^\Omega f(t,x)\,ds\\
&= \mathbf E\left[\int_0^{\tau_{t,x}}f\left(-X^{t,\beta}(s),X^{x,\alpha}(s)\right) ds\right],
\end{align*}
using as a dominating function $G:=\sup_n\|f_n\|_\infty$ to show that for each $s>0$ 
\[
F_n(s):=P_s^{\beta,\text{kill}}P_s^\Omega f_n(t,x)\to P_s^{\beta,\text{kill}}P_s^\Omega f(t,x)=:F(s),
\]
 and the dominating function $G(s):=\sup_n\|f_n\|_\infty\mathbf P[s<\tau_{t,x}]$ to show that 
\[
\int_0^\infty F_n(s)\,ds\to \int_0^\infty F(s)\,ds.
\]
The convergence on $[0,T]\times\partial\Omega\cup\{0\}\times\overbar\Omega$ is trivial.  It follows that a generalised solution $u$ exists and it is  given by
 $$
u = (-\mathcal L_{\beta,\Omega}^{\text{kill}})^{-1}(f+\mathcal L_\Omega \phi_0)+ \phi_0.
$$
Finally, independence of the approximating sequence proves uniqueness.\\

(iii) This is a standard application of Dynkin formula (\cite[Theorem 5.1]{dynkin1965}) using the finite stopping times $\tau_{t,x}$,  $(t,x)\in(0,T]\times\Omega$, namely
\begin{align*}
(-\mathcal L_{\beta,\Omega}^{\text{kill}})^{-1}(\mathcal L_\Omega \phi_0)(t,x) = \mathbf E\left[\int_0^{\tau_{t,x}}\mathcal L_{\beta,\Omega} \phi_0\left(X^{x,\alpha}(s)\right) ds\right]
=\mathbf E\left[\phi_0(X^{x,\alpha}(\tau_{t,x})) \right]-\phi_0(x).
\end{align*}
\end{proof}

We now show that the dual  of $\mathcal L_{\beta,\Omega}$ is $(-D^{\beta,*}_0+\flap_\Omega)$.
\begin{lemma}\label{lem_sdogandws}
Let $u\in \text{Dom}(\mathcal L_{\beta,\Omega})$. Then 
\begin{equation*}
\langle \mathcal L_{\beta,\Omega}u ,\varphi \rangle =\langle u,(-D^{\beta,*}_0+\flap_\Omega)\varphi\rangle,\quad \text{for every }\varphi\in C_c^{1,2}((0,T)\times\Omega).
\end{equation*}
\end{lemma}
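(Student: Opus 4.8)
The plan is to reduce the claim to the already-established duality facts by a density argument. Specifically, I want to show that for $u \in \text{Dom}(\mathcal L_{\beta,\Omega})$ and fixed $\varphi \in C_c^{1,2}((0,T)\times\Omega)$, both sides of the asserted identity are continuous linear functionals of $u$ (in the appropriate topology), and that they agree on a core, hence everywhere.

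First I would fix $\varphi \in C_c^{1,2}((0,T)\times\Omega)$ and consider the two functionals $u \mapsto \langle \mathcal L_{\beta,\Omega}u, \varphi\rangle$ and $u \mapsto \langle u, (-D_0^{\beta,*}+\flap_\Omega)\varphi\rangle$ on $\text{Dom}(\mathcal L_{\beta,\Omega})$. The right-hand side is manifestly continuous in the sup-norm on $C_{\partial\Omega}([0,T]\times\Omega)$, since $(-D_0^{\beta,*}+\flap_\Omega)\varphi$ is a fixed $L^1$ function on $(0,T)\times\Omega$ (note $\flap_\Omega\varphi$ is bounded and supported in a compact subset of $\Omega$, and $D_0^{\beta,*}\varphi$ is integrable with the delta-mass term contributing $I_T^{1-\beta}\varphi(0)=0$ since $\varphi$ vanishes near $0$; actually one should be mildly careful here and note $\varphi\in C_c$ kills the boundary term entirely). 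The left-hand side is continuous with respect to the graph norm of $\mathcal L_{\beta,\Omega}$. So it suffices to verify the identity on a core.

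By Lemma \ref{thm_semigroups}-(i), $\text{Span}\{\mathcal C_\beta \cdot \mathcal C_\Omega\}$ is a core for $(\mathcal L_{\beta,\Omega}, \text{Dom}(\mathcal L_{\beta,\Omega}))$ for any choice of invariant cores $\mathcal C_\beta$, $\mathcal C_\Omega$. I would choose $\mathcal C_\beta = C^1([0,T])$ (an invariant core by Proposition \ref{prop_csg}-(i)) and $\mathcal C_\Omega$ an invariant core for $\mathcal L_\Omega$ consisting of functions in $C_{\partial\Omega}(\Omega)\cap C^2(\Omega)$ — here I would use the approximating-sequence statement of Proposition \ref{prop_fl}-(i), or simply pass to products $w(t,x) = a(t)b(x)$ with $a\in C^1([0,T])$, $b\in C_{\partial\Omega}(\Omega)\cap C^2(\Omega)$ in $\text{Dom}(\mathcal L_\Omega)$. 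For such a product $w$, $\mathcal L_{\beta,\Omega}w = (\mathcal L_\beta a)\,b + a\,(\mathcal L_\Omega b) = (-D_0^\beta a)\,b + a\,(\flap_\Omega b)$, which is in $C_{\partial\Omega}([0,T]\times\Omega)$ and, crucially, $w \in C_{\partial\Omega}([0,T]\times\Omega)\cap C^{1,2}((0,T)\times\Omega)$ with $\partial_t w \in L^1((0,T)\times\Omega)$. Hence Lemma \ref{lem_dualonsmooth} applies directly to $w$, giving $\langle w, (-D_0^{\beta,*}+\flap_\Omega)\varphi\rangle = \langle (-D_0^\beta + \flap_\Omega)w, \varphi\rangle = \langle \mathcal L_{\beta,\Omega}w, \varphi\rangle$. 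Extending by linearity settles the identity on the core.

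The main obstacle — really the only delicate point — is making the closure/density step rigorous: given $u \in \text{Dom}(\mathcal L_{\beta,\Omega})$, take $w_n$ in the core with $w_n \to u$ and $\mathcal L_{\beta,\Omega}w_n \to \mathcal L_{\beta,\Omega}u$ uniformly; then $\langle w_n, (-D_0^{\beta,*}+\flap_\Omega)\varphi\rangle \to \langle u, (-D_0^{\beta,*}+\flap_\Omega)\varphi\rangle$ because $(-D_0^{\beta,*}+\flap_\Omega)\varphi \in L^1$, and $\langle \mathcal L_{\beta,\Omega}w_n, \varphi\rangle \to \langle \mathcal L_{\beta,\Omega}u, \varphi\rangle$ because $\varphi$ is bounded with compact support. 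One should double-check that $(-D_0^{\beta,*}+\flap_\Omega)\varphi$ is genuinely in $L^1((0,T)\times\Omega)$: the term $\partial_s I_T^{1-\beta}\varphi$ is continuous on $(0,T)$ and, since $\varphi$ is supported away from $t=T$, has no singularity there, while $\flap_\Omega\varphi$ is bounded on the compact support of $\varphi$; the delta term drops out as noted. With that verified, the limit passes and the proof is complete.
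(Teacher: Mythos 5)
Your overall strategy (verify the identity on a core for $\mathcal L_{\beta,\Omega}$, then pass to the limit in graph norm by dominated convergence) is exactly the paper's, but the step where you verify the identity on core elements has a genuine gap. To apply Lemma \ref{lem_dualonsmooth} to products $w=a\,b$ you need $b$ to be $C^2$ in space, and hence you need an invariant core $\mathcal C_\Omega$ for $(\mathcal L_\Omega,\text{Dom}(\mathcal L_\Omega))$ consisting of functions in $C_{\partial\Omega}(\Omega)\cap C^2(\Omega)$. No such core is available: Proposition \ref{prop_fl}-(i) only supplies, for a given $u\in\text{Dom}(\mathcal L_\Omega)$, a sequence $u_n\in C_{\partial\Omega}(\Omega)\cap C^2(\Omega)$ with $\flap_\Omega u_n\to\mathcal L_\Omega u$ merely uniformly on compact subsets of $\Omega$; these $u_n$ need not lie in $\text{Dom}(\mathcal L_\Omega)$, and the paper itself notes in the Appendix that even $C_c^\infty(\Omega)\not\subset\text{Dom}(\mathcal L_\Omega)$. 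So the fallback set $C^2(\Omega)\cap\text{Dom}(\mathcal L_\Omega)$ is not known to be graph-norm dense or invariant, and Lemma \ref{thm_semigroups}-(i) cannot be invoked for it. The repair — which is what the paper does — is to take the trivial invariant cores $\mathcal C_\beta=C^1([0,T])$ and $\mathcal C_\Omega=\text{Dom}(\mathcal L_\Omega)$, so core elements are $\sum_i a_i(t)b_i(x)$ with $b_i\in\text{Dom}(\mathcal L_\Omega)$ only, and to dualise the two variables separately: Proposition \ref{prop_Caputodual} handles the $C^1$ time factor, while Proposition \ref{prop_fl}-(ii) (the weak form of the spatial duality, itself proved from the approximating sequence of Proposition \ref{prop_fl}-(i), which you gestured at but did not use) handles the space factor for arbitrary $b_i\in\text{Dom}(\mathcal L_\Omega)$, together with the identity $\mathcal L_{\beta,\Omega}w=(-D^\beta_0+\mathcal L_\Omega)w$ on the core. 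With that substitution your limit passage goes through.

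A secondary slip: the delta term does not vanish. For $\varphi\in C_c^{1,2}((0,T)\times\Omega)$ one has $I^{1-\beta}_{T}\varphi(0,x)=\int_0^T\varphi(t,x)\,t^{-\beta}\Gamma(1-\beta)^{-1}\,dt$, which is generally nonzero even though $\varphi$ vanishes near $t=0$; the point mass contributes $u(0,x)\,I^{1-\beta}_{T}\varphi(0,x)$ to the pairing, matching the term $-u(0)I^{1-\beta}_{T}\varphi(0)$ in Proposition \ref{prop_Caputodual}. This does not affect sup-norm continuity of $u\mapsto\langle u,(-D^{\beta,*}_0+\flap_\Omega)\varphi\rangle$ (a point evaluation is still continuous), but discarding it would make the two sides disagree on core elements with $w(0)\neq 0$. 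Similarly, $\flap_\Omega\varphi$ is bounded but not compactly supported in $\Omega$, which is harmless here only because $\Omega$ is bounded.
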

\begin{proof}
By Lemma \ref{thm_semigroups}-(i) and Proposition \ref{prop_csg}-(i) we can pick a sequence 
$$
\{u_n\}_{n\in\mathbb N}\subset \text{Span}\left\{C^1([0,T])\cdot \text{Dom}(\mathcal L_\Omega)\right\},
$$ such that $u_n\to u$ and $\mathcal L_{\beta,\Omega}u_n\to \mathcal L_{\beta,\Omega}u$ in $C_{\partial\Omega}([0,T]\times \Omega)$, with the additional property 
\begin{equation}
\mathcal L_{\beta,\Omega}u_n =(-D^\beta_0+\mathcal L_\Omega)u_n,\quad\text{for every }n\in\mathbb N.
\label{ident}
\end{equation}
 Hence, for every $\varphi\in C_c^{1,2}((0,T)\times\Omega)$, as $n\to\infty$
\begin{equation*}
\langle \mathcal L_{\beta,\Omega}u ,\varphi \rangle \leftarrow \langle \mathcal L_{\beta,\Omega}u_n ,\varphi \rangle =\langle u_n,(-D^{\beta,*}_0+\flap_\Omega)\varphi\rangle\to \langle u,(-D^{\beta,*}_0+\flap_\Omega)\varphi\rangle,
\end{equation*}
where we use DCT for both limits, and for the equality we use the identity (\ref{ident}) along with Proposition \ref{prop_Caputodual} and the  dual identity in Proposition \ref{prop_fl}-(ii).
\end{proof}

We now combine Lemma \ref{lem_sdogandws} with the notion of generalised solution to obtain the main theorem of this section.
\begin{theorem}\label{thm_wspostRL}
Let $f\in L^\infty((0,T)\times \Omega)$ and $\phi_0\in C_{\partial\Omega}(\Omega)$. Then the function $u\in B([0,T]\times \overbar\Omega)$ defined in (\ref{SRpostRL})  is a weak solution to problem (\ref{postRL}).
\end{theorem}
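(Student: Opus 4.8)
The plan is to produce the weak solution by two nested approximations, relying on Lemma~\ref{lem_sdoggs} and Lemma~\ref{lem_sdogandws}, and to read off the initial condition directly from the stochastic representation~(\ref{SRpostRL}). First I fix a bounded Borel representative of $f$ on $[0,T]\times\overbar\Omega$; since for each $s>0$ the laws of $-X^{t,\beta}(s)$ and $X^{x,\alpha}(s)$ are absolutely continuous, the right-hand side of~(\ref{SRpostRL}) does not depend on this choice and defines a function $u\in B([0,T]\times\overbar\Omega)$ with $\|u\|_\infty\le\|\phi_0\|_\infty+\|f\|_\infty\mathbf E[\tau_0(T)]$ by the first identity in~(\ref{ident1}); as $(0,T)\times\Omega$ has finite Lebesgue measure, $u\in L^2((0,T)\times\Omega)$. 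Assume temporarily that $\phi_0\in\text{Dom}(\mathcal L_\Omega)$. By Lemma~\ref{lem_sdoggs}(ii)--(iii), $u$ is then the generalised solution to~(\ref{postRL}), realised as a pointwise limit $u=\lim_n u_n$, where each $u_n\in\text{Dom}(\mathcal L_{\beta,\Omega})$ is the solution in the domain of the generator for some forcing $f_n\in C_{\partial\Omega}([0,T]\times\Omega)$ with $f_n\to f$ a.e., $\sup_n\|f_n\|_\infty<\infty$, $f_n(0)=-\mathcal L_\Omega\phi_0$; each $u_n$ also satisfies~(\ref{SRpostRL}) with $f_n$ in place of $f$, so $\sup_n\|u_n\|_\infty<\infty$ by the bound above.

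Next I pass to the limit in the weak identity. Since $\mathcal L_{\beta,\Omega}u_n=-f_n$, Lemma~\ref{lem_sdogandws} gives $\langle -f_n,\varphi\rangle=\langle u_n,(-D^{\beta,*}_0+\flap_\Omega)\varphi\rangle$ for every $\varphi\in C^{1,2}_c((0,T)\times\Omega)$; unwinding the definition of $-D^{\beta,*}_0$ and using $u_n(0)=\phi_0$, the right-hand side equals $\int_0^T\!\!\int_\Omega u_n\bigl(\partial_t I^{1-\beta}_T\varphi+\flap_\Omega\varphi\bigr)\,dx\,dt+\int_\Omega\phi_0\,I^{1-\beta}_T\varphi(0)\,dx$. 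Letting $n\to\infty$: on the left $\langle -f_n,\varphi\rangle\to\langle -f,\varphi\rangle$ by dominated convergence, since $|f_n|\le\sup_n\|f_n\|_\infty$ and $\varphi$ is bounded with compact support; the first integral on the right converges to $\int_0^T\!\!\int_\Omega u\bigl(\partial_t I^{1-\beta}_T\varphi+\flap_\Omega\varphi\bigr)\,dx\,dt$, again by dominated convergence, because $u_n\to u$ pointwise and boundedly while $\partial_t I^{1-\beta}_T\varphi$ and $\flap_\Omega\varphi$ are bounded, hence in $L^1((0,T)\times\Omega)$; and the trace integral does not depend on $n$. This is precisely~(\ref{wsdual}) for $u$, under the temporary assumption $\phi_0\in\text{Dom}(\mathcal L_\Omega)$.

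For the initial condition --- which I verify for arbitrary $\phi_0\in C_{\partial\Omega}(\Omega)$ and $f\in L^\infty$ --- fix $x\in\Omega$; as $t\downarrow0$ one has $\tau_0(t)\downarrow0$ a.s., hence $X^{x,\alpha}(\tau_0(t))\to x$ a.s.\ by right-continuity, and $\tau_\Omega(x)>0$ a.s.\ because $x$ lies in the open set $\Omega$, so $\mathbf 1_{\{\tau_0(t)<\tau_\Omega(x)\}}\to1$ a.s. Bounded convergence then shows the first term of~(\ref{SRpostRL}) tends to $\phi_0(x)$ while the second is bounded by $\|f\|_\infty\mathbf E[\tau_0(t)]\to0$; thus $u(t,x)\to\phi_0(x)$ for every $x\in\Omega$, in particular a.e. It remains to remove the assumption $\phi_0\in\text{Dom}(\mathcal L_\Omega)$: since $\text{Dom}(\mathcal L_\Omega)$ is dense in $C_{\partial\Omega}(\Omega)$, pick $\phi_0^{(m)}\in\text{Dom}(\mathcal L_\Omega)$ with $\phi_0^{(m)}\to\phi_0$ uniformly and let $u^{(m)}$ be given by~(\ref{SRpostRL}) with $\phi_0^{(m)}$; by linearity in the datum, $\|u^{(m)}-u\|_\infty\le\|\phi_0^{(m)}-\phi_0\|_\infty\to0$. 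Passing to the limit $m\to\infty$ in the weak identity for $u^{(m)}$ --- the bulk term converges since $u^{(m)}\to u$ in $L^2((0,T)\times\Omega)$ against a bounded multiplier, and the trace term since $\phi_0^{(m)}\to\phi_0$ uniformly and $I^{1-\beta}_T\varphi(0,\cdot)$ is bounded on $\Omega$ --- yields~(\ref{wsdual}) for $u$, which together with $u(t)\to\phi_0$ a.e.\ shows that $u$ is a weak solution to~(\ref{postRL}).

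I expect the main subtlety to be the bookkeeping of the atom $\delta_0(ds)\,I^{1-\beta}_T\varphi(0)$ inside $-D^{\beta,*}_0$: one must check that the trace term $u_n(0)$ carried by Lemma~\ref{lem_sdogandws} for functions in $\text{Dom}(\mathcal L_{\beta,\Omega})$ matches Definition~\ref{def_weaksolution} once this trace is identified with $\phi_0$ through the a.e.\ limit, and then ensure the two nested dominated-convergence passages (first in $f_n$, then in $\phi_0^{(m)}$) are legitimate. The probabilistic inputs --- $\tau_0(t)\downarrow0$, $\tau_\Omega(x)>0$ for $x\in\Omega$, and absolute continuity of the one-dimensional marginals --- are standard and recorded in Section~\ref{sec_preliminaries}.
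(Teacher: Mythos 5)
Your proposal is correct and follows essentially the same route as the paper: first treat $\phi_0\in\text{Dom}(\mathcal L_\Omega)$ via the generalised-solution approximation of Lemma \ref{lem_sdoggs} combined with the duality identity of Lemma \ref{lem_sdogandws} and two dominated-convergence passages, then remove the restriction on $\phi_0$ by density, and verify $u(t)\to\phi_0$ from the stochastic representation exactly as in Remark \ref{rmk_cont}. The only cosmetic difference is that you make the $\delta_0$-trace term explicit and obtain uniform (rather than merely pointwise bounded) convergence in the $\phi_0$-approximation, neither of which changes the argument.
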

\begin{proof}
Assume for the moment that $\phi_0\in \text{Dom}(\mathcal L_\Omega)$. By the definition of a generalised solution we can take an approximating sequence of forcing terms $\{f_n\}_{n\in\mathbb N}\subset C_{\partial\Omega}([0,T]\times\Omega)$ such that $f_n\to f$ a.e., $\sup_n\|f_n\|_\infty<\infty$, and the respective solutions in the domain of the generator $\{u_n\}_{n\in\mathbb N}$ satisfy
\[
u_n(0)=\phi_0\text{ for all }n\in\mathbb N,\quad u_n\to u\text{ pointwise on }[0,T]\times \Omega,\quad \sup_n\|u_n\|_\infty<\infty,
\]  
where the last property is an immediate consequence of the stochastic  representation (\ref{SRpostRL}).
Hence, we obtain for every $\varphi \in C^{1,2}_c((0,T)\times\Omega)$, as $n\to\infty$
\[
\langle -f,\varphi \rangle\leftarrow\langle -f_n,\varphi \rangle = \langle \mathcal L_{\beta,\Omega}u_n,\varphi \rangle  = \langle u_n,(-D^{\beta,*}_0+\flap_\Omega)\varphi \rangle \to \langle u,(-D^{\beta,*}_0+\flap_\Omega)\varphi\rangle,
\]
where we applied DCT for both limits, the first equality is due to the $u_n$'s being solutions in the domain of the generator, and the second equality holds as a consequence of Lemma \ref{lem_sdogandws}. \\
Now, for $\phi_{0}\in C_{\partial\Omega}(\Omega)$, let  $\{\phi_{0,n}\}_{n\in\mathbb N}\subset\text{Dom}(\mathcal L_\Omega)$ such that $\phi_{0,n}\to \phi_{0}$ in $C_{\partial\Omega}(\Omega)$. Let $u_n$ be the generalised solution to problem (\ref{preRL}) for $f\in B([0,T]\times \overbar\Omega)$ and $\phi_n\in \text{Dom}( \mathcal L_\Omega)$, and $u$ defined as in (\ref{SRpostRL}). Then $u_n\to u$ pointwise and $\sup_n\|u_n\|_\infty<\infty$, which in turn implies by DCT
\[
\langle -f,\varphi \rangle=\lim_{n\to\infty} \langle u_n,(-D^{\beta,*}_0+\flap_\Omega)\varphi \rangle =\langle u,(-D^{\beta,*}_0+\flap_\Omega)\varphi \rangle.
\] 
It is clear that the result holds for $f\in L^\infty((0,T)\times \Omega)$. Finally, the required convergence of $u$ to the initial condition $\phi_0$ follows by the argument in Remark \ref{rmk_cont}, using the stochastic representation (\ref{SRpostRL}).
\end{proof}

\section{Stochastic classical solution for problem (\ref{postRL})}\label{sec_existence}
\begin{definition}Let $f\in C((0,T]\times\Omega)$ and $\phi_0\in C(\Omega)$. A function $u\in C_{\partial\Omega}([0,T]\times\Omega)\cap C^{1,2}((0,T)\times\Omega)$, such that $|\partial_tu(t,x)|\le Ct^{-\gamma},$ for every $(t,x)\in(0,T]\times\Omega, \text{ for some }\gamma\in(0,1),\ C>0$,  is said to be a \emph{classical solution to problem} (\ref{postRL}) if $u$ satisfies the identities  in (\ref{postRL}),  and for every $x\in\Omega$
$$
\lim_{t\downarrow 0}|u(t,x)-\phi_0(x)|=0.
$$
\end{definition}
In this section  the pairing $\langle \cdot,\cdot\rangle$ is defined as
\[
\langle f,g\rangle:=\int_\Omega f(x)g(x)\,dx.
\]
 The proof of the main theorem of this section (Theorem \ref{thm_sspostRL}),  extends the eigenfunction expansion argument in \cite[Thoerem 5.1]{MeerChen}, using the next  lemma as the key extra ingredient. Define for $\lambda\in\mathbb R\backslash\{0\}$ and $f\in C([0,T])$
\begin{equation*}
\myF{f}{\lambda}(t):=(-\lambda)^{-1}\int_0^t f(r)\partial_t E_\beta(-\lambda (t-r)^\beta)\,dr,\quad t>0.
\end{equation*}

\begin{lemma}\label{lem_identity}
Let $\lambda>0$ and $f\in C([0,T])$. Then
\begin{enumerate}[(i)]
	\item 
\begin{equation*}
\mathbf E\left[\int_0^{\tau_0(t)}e^{-\lambda s} f(-X^{t,\beta}(s))\,ds \right]=\myF{f}{\lambda}(t),\quad t>0.
\end{equation*}

\item The bound  
\begin{equation}
 \left|\myF{f}{\lambda}(t)\right|\le \frac{c}{\lambda}\|f\|_\infty, \quad t>0,
\label{first}
\end{equation}
holds, and if $f\in C^{1}([0,T])$ then
\begin{equation}
\left|\partial_t \myF{f}{\lambda}(t)\right|\le\frac{c}{\lambda}\left( \|f'\|_\infty +f(0)\frac{ \lambda t^{\beta-1}}{1+\lambda t^\beta}\right),\quad t>0,
\label{second}
\end{equation}
for some positive constant $c$.
\end{enumerate}
\end{lemma}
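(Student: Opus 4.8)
The plan is to prove (i) by a Fubini-and-Laplace-transform computation, and then to obtain both bounds in (ii) from (i) together with one Mittag--Leffler derivative estimate. For (i), recall from the proof of Proposition~\ref{prop_csg}-(ii) that the sub-probability law of $-X^{t,\beta}(s)$ on the event $\{s<\tau_0(t)\}$ has density $r\mapsto p_s^\beta(t-r)\mathbf 1_{(0,t)}(r)$. Since $e^{-\lambda s}|f(r)|p_s^\beta(t-r)$ has double integral over $(0,\infty)\times(0,t)$ bounded by $\lambda^{-1}\|f\|_\infty$, Tonelli's theorem gives
\[
\mathbf E\left[\int_0^{\tau_0(t)}e^{-\lambda s}f\big(-X^{t,\beta}(s)\big)\,ds\right]=\int_0^t f(r)\,g_\lambda(t-r)\,dr,\qquad g_\lambda(u):=\int_0^\infty e^{-\lambda s}p_s^\beta(u)\,ds,
\]
and it remains to identify $g_\lambda$, which by~(\ref{ident3}) satisfies $g_\lambda(u)\le u^{\beta-1}/\Gamma(\beta)$, so in particular is locally integrable. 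Laplace-transforming in $u$ and using $\int_0^\infty e^{-\xi u}p_s^\beta(u)\,du=e^{-\xi^\beta s}$ yields $\int_0^\infty e^{-\xi u}g_\lambda(u)\,du=(\xi^\beta+\lambda)^{-1}$; on the other hand $(\xi^\beta+\lambda)^{-1}=\sum_{j\ge1}(-\lambda)^{j-1}\xi^{-j\beta}$ has term-by-term Laplace inverse $\sum_{j\ge1}(-\lambda)^{j-1}u^{j\beta-1}/\Gamma(j\beta)=(-\lambda)^{-1}\partial_uE_\beta(-\lambda u^\beta)$ (this being the classical variation-of-constants kernel of $D_0^\beta v+\lambda v=f$, $v(0)=0$, cf.\ \cite[Theorem 7.2]{kai}). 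By injectivity of the Laplace transform, $g_\lambda(u)=(-\lambda)^{-1}\partial_uE_\beta(-\lambda u^\beta)$, and substituting back gives exactly $\myF{f}{\lambda}(t)$.

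Bound~(\ref{first}) then follows at once from (i): $|\myF{f}{\lambda}(t)|\le\|f\|_\infty\,\mathbf E\big[\int_0^{\tau_0(t)}e^{-\lambda s}\,ds\big]\le\lambda^{-1}\|f\|_\infty$. For~(\ref{second}) I would, assuming now $f\in C^1([0,T])$, integrate by parts in the definition of $\myF{f}{\lambda}$ using $\partial_tE_\beta(-\lambda(t-r)^\beta)=-\partial_rE_\beta(-\lambda(t-r)^\beta)$ and $E_\beta(0)=1$, obtaining
\[
\myF{f}{\lambda}(t)=\tfrac1\lambda\big(f(t)-f(0)E_\beta(-\lambda t^\beta)\big)-\tfrac1\lambda\int_0^t f'(r)E_\beta(-\lambda(t-r)^\beta)\,dr,
\]
and then differentiate in $t$: the remaining convolution of $f'$ with the bounded kernel $v\mapsto E_\beta(-\lambda v^\beta)$ (with value $1$ at $v=0$) has $t$-derivative $f'(t)+\int_0^t f'(r)\partial_tE_\beta(-\lambda(t-r)^\beta)\,dr$, the two $f'(t)$ contributions cancel, and there results the clean identity
\[
\partial_t\myF{f}{\lambda}(t)=\myF{f'}{\lambda}(t)-\frac{f(0)}{\lambda}\,\partial_tE_\beta(-\lambda t^\beta).
\]
By~(\ref{first}) applied to $f'$ the first term is $\le\lambda^{-1}\|f'\|_\infty$, and writing $\partial_tE_\beta(-\lambda t^\beta)=-\lambda\beta t^{\beta-1}E_\beta'(-\lambda t^\beta)$ reduces the second to the estimate $(1+x)E_\beta'(-x)\le C$ for $x\ge0$.

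I would prove this last estimate probabilistically. By~(\ref{ident1}) with $t=1$, $E_\beta(-x)=\mathbf E[e^{-x\tau_0(1)}]$, so differentiating under the expectation (legitimate since $\tau_0(1)e^{-x\tau_0(1)}\le\tau_0(1)$ and $\mathbf E[\tau_0(1)]=\Gamma(\beta+1)^{-1}<\infty$ by~(\ref{ident1})) gives $E_\beta'(-x)=\mathbf E[\tau_0(1)e^{-x\tau_0(1)}]\ge0$, whence $(1+x)E_\beta'(-x)=\mathbf E[(1+x)\tau_0(1)e^{-x\tau_0(1)}]\le\mathbf E[\tau_0(1)]+e^{-1}$ using $ue^{-xu}\le u$ and $(xu)e^{-xu}\le e^{-1}$ for $u\ge0$. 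Taking $x=\lambda t^\beta$ in the identity for $\partial_t\myF{f}{\lambda}(t)$ then yields~(\ref{second}) for a suitable constant $c$ (with $|f(0)|$ in place of $f(0)$, which is what the later application needs). I expect the one genuinely non-bookkeeping point to be the rigorous differentiation of the Mittag--Leffler convolution after the integration by parts --- which rests on the standard rule $\tfrac{d}{dt}\int_0^t g(r)\Phi(t-r)\,dr=g(t)\Phi(0^+)+\int_0^t g(r)\Phi'(t-r)\,dr$ for $g$ continuous and $\Phi\in W^{1,1}_{\mathrm{loc}}$ with $\Phi(0^+)$ finite, here with $\Phi(v)=E_\beta(-\lambda v^\beta)$ --- together with the uniform-in-$\lambda$ decay of $E_\beta'$ just established, which is precisely what forces the extra factor $\lambda t^{\beta-1}/(1+\lambda t^\beta)$ in~(\ref{second}) and the differentiability hypothesis on $f$.
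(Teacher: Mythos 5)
Your proof is correct, but part (i) follows a genuinely different route from the paper's. The paper identifies the left-hand side as the resolvent $(\lambda-\mathcal L_\beta)^{-1}w$ of the absorbed semigroup, shows by an explicit computation that this resolvent lies in $C^1_0([0,T])$ so that $\mathcal L_\beta$ acts on it as $-D^\beta_0$, and then invokes the existence--uniqueness theory for the Caputo initial value problem $D^\beta_0\bar u=-\lambda\bar u+f$ (\cite[Theorems 6.5 and 7.2]{kai}) to match it with the Mittag--Leffler formula; the extension from $C^1$ to $C([0,T])$ is done by dominated convergence, exactly as you do. You instead compute the potential kernel $g_\lambda(u)=\int_0^\infty e^{-\lambda s}p^\beta_s(u)\,ds$ directly and identify it as $(-\lambda)^{-1}\partial_u E_\beta(-\lambda u^\beta)$ by matching Laplace transforms ($(\xi^\beta+\lambda)^{-1}$ on both sides), which bypasses the semigroup-generator machinery and the Caputo IVP uniqueness theorem at the cost of a Laplace-inversion/injectivity argument; both are sound, and yours is arguably more self-contained. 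For part (ii) your integration by parts and the resulting identity $\partial_t\myF{f}{\lambda}(t)=\myF{f'}{\lambda}(t)-\lambda^{-1}f(0)\partial_tE_\beta(-\lambda t^\beta)$ coincide with the paper's computation; the only difference is that the paper quotes the bounds $0\le -\partial_tE_\beta(-\lambda t^\beta)\le c\lambda t^{\beta-1}/(1+\lambda t^\beta)$ from \cite[Theorem 7.3]{kai} and \cite[Equation (17)]{Kra03}, whereas you derive the equivalent estimate $(1+x)E_\beta'(-x)\le C$ probabilistically from $E_\beta(-x)=\mathbf E[e^{-x\tau_0(1)}]$ and $\mathbf E[\tau_0(1)]<\infty$, which is a nice self-contained substitute. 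Your bound (\ref{first}) with constant $1$ via the representation (i) is cleaner than the paper's direct kernel estimate, and your remark that $f(0)$ in (\ref{second}) should be read as $|f(0)|$ matches an implicit convention in the paper's own proof.
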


\begin{proof} (i) Given the second identity in (\ref{ident1}), it is enough to prove the equivalent identity 
\begin{equation}
\mathbf E\left[\int_0^{\tau_0(t)}e^{-\lambda s} f(-X^{t,\beta}(s))\,ds \right]+u_0\mathbf E\left[ e^{-\lambda \tau_0(t)} \right]=\myF{f}{\lambda}(t) + u_0E_\beta(-\lambda t^\beta),
\label{whatweprove}
\end{equation}
where $u_0$ is some constant. 
We show that the lhs of (\ref{whatweprove}) is the unique continuous solution to the Caputo initial value problem solved by the rhs of (\ref{whatweprove}).
Let $w\in C_0([0,T])$  such that $w'\in C([0,T])$. Then $u(t):=(\lambda-\mathcal L_\beta)^{-1}w(t) = \mathbf E[\int_0^{\tau_0(t)} e^{-\lambda s}w(-X^{t,\beta}(s))\,ds]$ solves the resolvent equation  
$$
\mathcal L_\beta u =\lambda u - w,\quad u(0)=0,
$$
and $u\in \text{Dom}(\mathcal L_\beta)$, by Proposition \ref{prop_csg}-(i). By the following computation 
\begin{align*}
\partial_t u(t)&=\partial_t \int_0^t w(t-y)\left(\int_0^\infty e^{-\lambda s} p^\beta_s(y)\,ds\right)dy\\
&= w(0)\int_0^\infty e^{-\lambda s} p^\beta_s(t)\,ds+\int_0^t w'(t-y)\int_0^\infty e^{-\lambda s} p^\beta_s(y)\,ds\,dy,&t>0,
\end{align*}
it follows that 
 $u\in C^1_0([0,T])$, and so $\mathcal L_\beta u =-D^{\beta}_0 u$ by Proposition \ref{prop_csg}-(i). Let $u_0\in\mathbb R$. Then $\bar u:= u+u_0 $ is a continuous solution to the Caputo initial value problem 
\begin{align*}
-D^{\beta}_0\bar u=\mathcal L_\beta u-D^{\beta}_0u_0 =\lambda u-w=\lambda \bar u-(w+\lambda u_0),
\end{align*}
with initial value $\bar u(0)=u_0$. By  \cite[Theorem 6.5 and Theorem 7.2]{kai} we obtain $
\bar u =\text{rhs of (\ref{whatweprove}) for $f=w+\lambda u_0$}.$
Now compute
\begin{align*}
\bar u(t)&=\mathbf E\left[\int_0^{\tau_0(t)} e^{-\lambda s}\left(w(-X^{t,\beta}(s))\pm\lambda u_0\right)ds\right] +u_0\\
&=\mathbf E\left[\int_0^{\tau_0(t)} e^{-\lambda s}\left (w(-X^{t,\beta}(s))+\lambda u_0\right)ds\right]-\lambda u_0\frac{\mathbf E\left[ e^{-\lambda \tau_0(t)}\right]-1}{-\lambda} +u_0\\
&=\mathbf E\left[\int_0^{\tau_0(t)} e^{-\lambda s}\left (w(-X^{t,\beta}(s))+\lambda u_0\right)ds\right]+ u_0\mathbf E\left[ e^{-\lambda \tau_0(t)}\right].
\end{align*}
Now, for an arbitrary $f\in C^1([0,T])$, by picking  $w\equiv f-f(0)$ and $u_0 \equiv f(0)\lambda^{-1}$, we obtain the equality (\ref{whatweprove}). A straightforward application of DCT proves the claim for $f\in C([0,T]).$\\

(ii) 
Recall that there exists a constant $c>0$ such that $0\le-\partial_t E_\beta(-\lambda t^\beta)\le c\frac{\lambda t^{\beta-1}}{1+\lambda t^\beta}$ by \cite[Theorem 7.3]{kai} and \cite[Equation (17)]{Kra03}, and  $E_\beta(-\lambda t^\beta)\le \frac{ c}{1+\lambda t^\beta}$. Then
\begin{align*}
\left| (-\lambda)^{-1}\int_0^t f(r) \partial_t E_\beta(-\lambda (t-r)^\beta)\, dr\right|\le&\ \|f\|_{\infty}\frac{1-E_\beta(-\lambda t^\beta)}{\lambda}\le  \|f\|_{\infty} \frac{1+ c}{\lambda}.
\end{align*}
For the second inequality we exploit the smoothness of $f$, computing for $t>0$
\begin{align*}
 \partial_t \myF{f}{\lambda}(t)=&\ (-\lambda)^{-1} \partial_t \left(-\int_0^t f(r)  \partial_rE_\beta(-\lambda  (t-r)^\beta)\, dr\right)\\
 =&\ (-\lambda)^{-1} \partial_t \left(\int_0^t f'(r)  E_\beta(-\lambda  (t-r)^\beta)\, dr-f(t)  +f(0) E_\beta(-\lambda  t^\beta) \right)\\
 =&\ (-\lambda)^{-1}\left(\int_0^t f'(r)  \partial_t E_\beta(-\lambda  (t-r)^\beta)\, dr\pm f'(t)  +f(0) \partial_t E_\beta(-\lambda  t^\beta) \right)\\
 =&\ \myF{f'}{\lambda}(t)-\lambda^{-1}f(0) \partial_t E_\beta(-\lambda  t^\beta).
\end{align*}
Then
\begin{align*}
\left| \partial_t \myF{f}{\lambda}(t)\right|\le &\ \|f'\|_\infty \frac{1+c}{\lambda}+f(0)c\frac{ t^{\beta-1}}{1+\lambda t^\beta}.
\end{align*}
\end{proof}

From the proof of \cite[Theorem 5.1]{MeerChen}, we infer the following lemma.
\begin{lemma}\label{lem_chen_bounds} Working with the notation of  Proposition \ref{prop_fl}-(iii): 
\begin{enumerate}[(i)]
\item  the system of eigenvectors $\{\psi_n\}_{n\in\mathbb N}$ forms an orthonormal basis of $\text{Dom}(\mathcal L_{\Omega,2}^k)\subset L^2(\Omega)$. The corresponding eigenvalues can be ordered so that $\lambda_{n} \leq \lambda_{n+1}$, and also $\lambda_n\le \tilde c_1n^{\alpha/d}$ for some constant $\tilde c_1>0$. Also, for any compact subset $K$ of $\Omega$, $j=0,1,2$, there are  constants $c_1=c_1(K,j,d,\alpha)$ such that
\begin{align}
    \label{eq_eigenvector_bound} |\nabla^j \psi_n(x) | &\leq c_1 \lambda_n ^{(d+2j)/(2\alpha)},
\end{align}
where $c_1(K,0,d,\alpha)$ is independent of $K$.
\item Suppose $\phi_0 \in \operatorname{Dom} (\mathcal L_{\Omega,2}^k)$ for $k > -1 + (3d+4)/(2\alpha)$. Then 
$ N := \sum_{n=1}^\infty \lambda_n^{2k} \langle \phi_0, \psi_n\rangle ^2 < \infty $, and the series 
 \[
\sum_{n=1}^\infty E_\beta (-\lambda_n t^\beta)\langle \phi_0, \psi_n \rangle \psi_n(x)=\mathbf E\left[\phi_0(X^{x,\alpha}(\tau_0(t)))\mathbf 1_{\{\tau_0(t)<\tau_\Omega(x)\}}\right],
 \]
 defines a function in $C_{\partial\Omega}([0,T]\times \Omega)\cap C^{1,2}( (0,T)\times \Omega)$ , with bounds for $j=1,2$,
\begin{alignat}{3}
    & \nonumber\sum_{n=1}^\infty \abs{E_\beta (-\lambda_n t^\beta)\langle \phi_0, \psi_n \rangle \nabla^j \psi_n(x)}
    &&\le( c_2  \sqrt{N})t^{-\beta} \sum_{n=1}^\infty \lambda_n^{(d+4)/(2\alpha) -1-k} < \infty, 
    &&\quad  t>0,\\
    &\nonumber \sum_{n=1}^\infty \abs{\partial_t E_\beta (-\lambda_n t^\beta)\langle \phi_0, \psi_n \rangle \psi_n(x)} 
    &&\leq  c_3 t^{\gamma \beta  - 1}, 
    && \quad x\in \Omega,
\end{alignat}
where $c_2 = c_2(K,j,d,\alpha), c_3 = c_3(\Omega,\alpha)$, and $0\le \gamma \le  1\wedge (4/(2\alpha)-1) $.
\end{enumerate}
\end{lemma}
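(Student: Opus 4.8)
The plan is to treat the lemma as a repackaging of the proof of \cite[Theorem 5.1]{MeerChen}, supplying the new Mittag-Leffler bookkeeping from Lemma \ref{lem_identity}. For part (i), I would invoke the spectral theorem for the self-adjoint operator $\mathcal L_{\Omega,2}$ with compact resolvent (Proposition \ref{prop_fl}(iii)) to get the eigenvalues $0<\lambda_1<\lambda_2\le\cdots\to\infty$ and the $L^2$-orthonormal eigenbasis $\{\psi_n\}$; since $\text{Dom}(\mathcal L_{\Omega,2}^k)$ is by definition $\{f:\sum_n\lambda_n^{2k}\langle f,\psi_n\rangle^2<\infty\}$, each $\psi_n$ lies in it and the $\psi_n$ form an orthonormal basis of it. The two-sided Weyl bound $\tilde c_2 n^{\alpha/d}\le\lambda_n\le\tilde c_1 n^{\alpha/d}$ I would obtain from the heat-trace estimate $\sum_n e^{-\lambda_n s}=\int_\Omega p^\Omega_s(x,x)\,dx\le c\,s^{-d/\alpha}$ (and a matching lower bound) by optimizing over $s>0$. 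For the eigenfunction bounds (\ref{eq_eigenvector_bound}) I would use the self-reproducing identity $\psi_n=e^{\lambda_n s}P^\Omega_s\psi_n$ (valid pointwise since, by Proposition \ref{prop_fl}(iii), $\psi_n\in C_{\partial\Omega}(\Omega)$ is also an eigenfunction of $P^\Omega$ on $C_{\partial\Omega}(\Omega)$): writing $\nabla^j\psi_n(x)=e^{\lambda_n s}\int_\Omega\nabla^j_x p^\Omega_s(x,y)\psi_n(y)\,dy$, the interior scaling estimate $\|\nabla^j_x p^\Omega_s(x,\cdot)\|_{L^2(\Omega)}\le c\,s^{-(d+2j)/(2\alpha)}$ (valid for $x$ in a compact subset, following \cite{MeerChen}; no interior estimate is needed for $j=0$, so the constant is uniform on $\Omega$), Cauchy--Schwarz and $\|\psi_n\|_{L^2}=1$ give $|\nabla^j\psi_n(x)|\le c\,e^{\lambda_n s}s^{-(d+2j)/(2\alpha)}$, and choosing $s=\lambda_n^{-1}$ yields the claim.

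For part (ii), the finiteness $N<\infty$ is exactly the hypothesis, and from it $\lambda_n^k|\langle\phi_0,\psi_n\rangle|\le\sqrt N$ for every $n$; note also that, combined with (\ref{eq_eigenvector_bound}) for $j=0$ and the Weyl bound, the hypothesis forces $k>3d/(2\alpha)$, so $\sum_n\langle\phi_0,\psi_n\rangle\psi_n$ converges uniformly and (after choosing the continuous representative) $\phi_0\in C_{\partial\Omega}(\Omega)$. To identify the series with the stochastic representation I would condition on $\tau_0(t)$ and use independence of $\tau_0(t)$ and $X^{x,\alpha}_\Omega$ to write $\mathbf E[\phi_0(X^{x,\alpha}(\tau_0(t)))\mathbf 1_{\{\tau_0(t)<\tau_\Omega(x)\}}]=\mathbf E[(P^\Omega_{\tau_0(t)}\phi_0)(x)]$; for fixed $s>0$ the eigenexpansion $(P^\Omega_s\phi_0)(x)=\sum_n e^{-\lambda_n s}\langle\phi_0,\psi_n\rangle\psi_n(x)$ converges absolutely and uniformly in $x$ (exponential decay dominates the polynomial factor $\lambda_n^{d/(2\alpha)-k}$), hence holds pointwise; then I would interchange $\mathbf E$ and the sum by dominated convergence — the dominating series being finite by the $j=0$ instance of the estimate below — and invoke $\mathbf E[e^{-\lambda\tau_0(t)}]=E_\beta(-\lambda t^\beta)$ from (\ref{ident1}) to reach $\sum_n E_\beta(-\lambda_n t^\beta)\langle\phi_0,\psi_n\rangle\psi_n(x)$.

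The remaining task, the two termwise bounds and the ensuing regularity, is where the Mittag-Leffler estimates enter. From the proof of Lemma \ref{lem_identity}(ii) (via \cite{kai,Kra03}) one has $0\le E_\beta(-\lambda t^\beta)\le c(1+\lambda t^\beta)^{-1}$ and $|\partial_t E_\beta(-\lambda t^\beta)|\le c\,\lambda t^{\beta-1}(1+\lambda t^\beta)^{-1}$. For the first bound I would estimate $E_\beta(-\lambda_n t^\beta)\le c\,\lambda_n^{-1}t^{-\beta}$, so that with $\lambda_n^k|\langle\phi_0,\psi_n\rangle|\le\sqrt N$ and (\ref{eq_eigenvector_bound}) the $n$th term is at most $c\sqrt N\,t^{-\beta}\lambda_n^{(d+2j)/(2\alpha)-1-k}\le c\sqrt N\,t^{-\beta}\lambda_n^{(d+4)/(2\alpha)-1-k}$ for $j\in\{1,2\}$, and $\sum_n\lambda_n^{(d+4)/(2\alpha)-1-k}<\infty$ by the Weyl lower bound precisely when $k>-1+(3d+4)/(2\alpha)$. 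For the second bound I would use the elementary inequality $x/(1+x)\le x^\gamma$ (valid for $x\ge0$, $0\le\gamma\le1$) to get $|\partial_t E_\beta(-\lambda_n t^\beta)|\le c\,\lambda_n^\gamma t^{\gamma\beta-1}$, whence with the $j=0$ case of (\ref{eq_eigenvector_bound}) the $n$th term is at most $c\sqrt N\,t^{\gamma\beta-1}\lambda_n^{\gamma+d/(2\alpha)-k}$, and this sum converges whenever $\gamma<k-3d/(2\alpha)$, which the hypothesis guarantees for all $0\le\gamma\le1\wedge(4/(2\alpha)-1)$. These bounds show the series, its termwise $x$-derivatives up to order $2$, and its termwise $t$-derivative all converge uniformly on $K\times[\delta,T]$ for each compact $K\subset\Omega$ and $\delta>0$; since each summand is smooth in $x$ and $C^1$ in $t$, the sum lies in $C^{1,2}((0,T)\times\Omega)$ with the stated estimates. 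Finally, joint continuity up to $\{0\}\times\Omega$ and the vanishing on $[0,T]\times\partial\Omega$ — membership in $C_{\partial\Omega}([0,T]\times\Omega)$ — I would read off from the stochastic representation exactly as in \cite[Theorem 5.1]{MeerChen}.

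\textbf{Main obstacle.} The only genuinely delicate point is the exponent bookkeeping: one must check that the convergence thresholds for the three relevant series (the identification series with $j=0$, and the two bounds with $j\le 2$ and with the free parameter $\gamma$) all reduce to the single hypothesis $k>-1+(3d+4)/(2\alpha)$, and one must justify that the termwise-differentiated series represent the derivatives of the sum, which is a routine uniform-convergence argument once the bounds are in hand. The substantive analytic inputs — the gradient bounds (\ref{eq_eigenvector_bound}), the Weyl law, and the boundary/initial continuity of the stochastic representation — are imported from \cite{MeerChen}, so no new hard analysis is required there; the only new ingredient, used to absorb the singular Mittag-Leffler kernel $\partial_t E_\beta(-\lambda_n t^\beta)$, is Lemma \ref{lem_identity}.
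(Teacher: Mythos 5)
Your proposal is correct, and it follows exactly the route the paper intends: the paper gives no proof of this lemma beyond the phrase ``from the proof of \cite[Theorem 5.1]{MeerChen} we infer\dots'', and your reconstruction (spectral theorem plus two-sided Weyl asymptotics, the self-reproducing identity $\psi_n=e^{\lambda_n s}P^\Omega_s\psi_n$ with $s=\lambda_n^{-1}$ for the gradient bounds, conditioning on $\tau_0(t)$ together with $\mathbf E[e^{-\lambda_n\tau_0(t)}]=E_\beta(-\lambda_n t^\beta)$ for the identification, and the Mittag-Leffler estimates $E_\beta(-x)\lesssim(1+x)^{-1}$, $x/(1+x)\le x^\gamma$ for the two termwise bounds) is precisely that argument, with the exponent bookkeeping checked correctly against the threshold $k>-1+(3d+4)/(2\alpha)$. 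A minor point in your favour: you correctly identify that the convergence arguments use the \emph{lower} Weyl bound $\lambda_n\gtrsim n^{\alpha/d}$, which is the direction actually needed even though the lemma's statement records the upper one.
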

We will assume that the forcing term $f$ in (\ref{postRL}) belongs to the space of functions
\begin{equation}\label{spaceH}
C^1([0,T];\text{Dom}(\mathcal L_{\Omega,2}^k)):=\left\{f\in C_{\partial\Omega}^1([0,T]\times\overbar \Omega): \sup_t\| f(t)\|_{\mathcal L_{\Omega,2}^k}+ \sup_t\left\| \partial_t f(t)\right\|_{\mathcal L_{\Omega,2}^k}<\infty\right\}.
\end{equation}
 Note that if $f\in C^1([0,T];\text{Dom}(\mathcal L_{\Omega,2}^k))$, then there exists $M>0$ such that for every $n\in\mathbb N$
\begin{equation}
\sup_{t\in[0,T]}|\langle f(t),\psi_n \rangle|\le M\lambda_n^{-k},\quad \text{and}\quad \sup_{t\in[0,T]}\left| \langle \partial_tf(t),\psi_n \rangle\right|\le  M\lambda_n^{-k}.
\label{bounds_pair}
\end{equation}

\begin{remark}\label{rmk_HTk}
The inclusion $\text{Span}\{C^1([0,T])\cdot\text{Dom}(\mathcal L_{\Omega,2}^k)\}\subset C^1([0,T];\text{Dom}(\mathcal L_{\Omega,2}^k))$ is clear. Moreover, if $k\in \mathbb N$, then the inclusion $ C^{1,2k}_{c}([0,T]\times\Omega)\subset C^1([0,T];\text{Dom}(\mathcal L_{\Omega,2}^k))$ holds\footnote{We define  $C^{1,2k}_{c}([0,T]\times\Omega)= C^{1,2k}((0,T)\times\Omega)\cap\{f,\partial_tf \in C([0,T]\times\Omega), \text{supp}\{ f\}\subset [0,T]\times\Omega \text{ is compact} \} $.}. To see this, let $f\in C^{1,2k}_c([0,T]\times\Omega)$ and compute for each $t\in[0,T]$
\begin{align*}
\sum_{n=1}^\infty \lambda^{2k}_n\langle f(t),\psi_n\rangle^2&=\sum_{n=1}^\infty \langle f(t),\mathcal L_{\Omega,2}^k\psi_n\rangle^2=\sum_{n=1}^\infty \langle (\flap_\Omega)^kf(t),\psi_n\rangle^2=\|(\flap_\Omega)^kf(t)\|_{L^2(\Omega)}^2<\infty,
\end{align*}
\end{remark}
where the second equality holds by the same argument at the end the  proof of Theorem \ref{thm_sspostRL}, using $(\flap_\Omega)^mf(t)\in L^2(\Omega)$ for each $t\in[0,T]$ and $m\le k$. Now observe that by DCT the function $t\mapsto\|(\flap_\Omega)^kf(t)\|_{L^2(\Omega)}$ is continuous on $[0,T]$, because $(\flap_\Omega)^kf\in C([0,T]\times\overbar\Omega)$. Repeat the argument for $\partial_t f$ to conclude.

\begin{lemma}\label{lem_nvs}
If $f(t)\in \text{Dom}(\mathcal L_{\Omega,2}^k)$ for $k>-1+(3d+4)/(2\alpha)$, for every $t\in[0,T]$, and $f\in C_{\partial\Omega}([0,T]\times\Omega)$, then
\begin{equation*}
\mathbf E\left[\int_0^{\tau_{t,x}}f\left(-X^{t,\beta}(s), X^{x,\alpha}(s)\right)ds\right]=\sum_{n=1}^\infty \psi_n(x)\myF{\langle f(\cdot),\psi_n \rangle}{\lambda_n}(t).
\end{equation*}
If in addition $f\in C^1([0,T];\text{Dom}(\mathcal L_{\Omega,2}^k))$, then there exists a constant $C$ such that for $t\in(0,T]$
\begin{align}\label{second2}
 \sum_{n=1}^\infty \left| \psi_n(x)\partial_t \myF{\langle f(\cdot),\psi_n \rangle}{\lambda_n}(t)\right|
&\le Ct^{\beta - 1}.
\end{align}
\end{lemma}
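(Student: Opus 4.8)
The plan is to combine the separation-of-variables representation of the potential operator with the two parts of Lemma \ref{lem_identity}. First I would establish the identity
\[
\mathbf E\left[\int_0^{\tau_{t,x}}f\left(-X^{t,\beta}(s),X^{x,\alpha}(s)\right)ds\right]=\int_0^\infty P_s^{\beta,\text{kill}}P_s^\Omega f(t,x)\,ds,
\]
which is just the Fubini/Tonelli rewriting already used repeatedly in Section \ref{sec_ws}, justified by the finiteness of $\mathbf E[\tau_0(t)]$ from (\ref{ident1}) and the positivity of the integrand after splitting into positive and negative parts. Since $\{\psi_n\}$ is an orthonormal basis of $L^2(\Omega)$ and $P_s^\Omega\psi_n=e^{-\lambda_n s}\psi_n$ (Proposition \ref{prop_fl}-(iii)), I would expand $f(s,\cdot)=\sum_n\langle f(s),\psi_n\rangle\psi_n$, apply $P_s^\Omega$ termwise, then apply $P_s^{\beta,\text{kill}}$ (which acts only on the time variable). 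The $s$-integral of the $n$-th term is, by Proposition \ref{prop_csg}-(ii) together with Lemma \ref{lem_identity}-(i),
\[
\int_0^\infty e^{-\lambda_n s}P_s^{\beta,\text{kill}}\langle f(\cdot),\psi_n\rangle(t)\,ds=\mathbf E\left[\int_0^{\tau_0(t)}e^{-\lambda_n s}\langle f(-X^{t,\beta}(s)),\psi_n\rangle\,ds\right]=\myF{\langle f(\cdot),\psi_n\rangle}{\lambda_n}(t),
\]
giving the claimed series. The interchange of $\sum_n$ with the expectation/integral is controlled by Lemma \ref{lem_identity}-(ii), bound (\ref{first}), together with the first estimate in (\ref{bounds_pair}) and the pointwise eigenfunction bound (\ref{eq_eigenvector_bound}) for $j=0$: $|\psi_n(x)\myF{\langle f(\cdot),\psi_n\rangle}{\lambda_n}(t)|\le c_1 M\lambda_n^{d/(2\alpha)-1-k}$, which is summable precisely because $k>-1+(3d+4)/(2\alpha)$ forces the exponent below $-\alpha/d$ after invoking $\lambda_n\ge \tilde c_1^{-1}n^{\alpha/d}$ (or more simply $\lambda_n$ grows like $n^{\alpha/d}$). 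Dominated convergence then legitimises all the interchanges and also shows the sum converges uniformly in $(t,x)$.

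For the second assertion I would differentiate termwise and estimate using bound (\ref{second}) of Lemma \ref{lem_identity}-(ii): for each $n$,
\[
\left|\partial_t\myF{\langle f(\cdot),\psi_n\rangle}{\lambda_n}(t)\right|\le\frac{c}{\lambda_n}\left(\|\langle\partial_tf(\cdot),\psi_n\rangle\|_\infty+|\langle f(0),\psi_n\rangle|\frac{\lambda_n t^{\beta-1}}{1+\lambda_n t^\beta}\right)\le\frac{cM}{\lambda_n^{1+k}}+cM\lambda_n^{-k}\frac{t^{\beta-1}}{1+\lambda_n t^\beta},
\]
using both inequalities in (\ref{bounds_pair}). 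Multiplying by $|\psi_n(x)|\le c_1\lambda_n^{d/(2\alpha)}$ and summing, the first piece contributes $\le C\sum_n\lambda_n^{d/(2\alpha)-1-k}<\infty$, hence is bounded uniformly on $[0,T]$ and in particular is $\le C't^{\beta-1}$ for $t\in(0,T]$ since $t^{\beta-1}$ is bounded below away from $0$. For the second piece I would use the elementary inequality $\frac{t^{\beta-1}}{1+\lambda_n t^\beta}\le t^{\beta-1}$ to extract $t^{\beta-1}$, leaving $t^{\beta-1}\sum_n\lambda_n^{d/(2\alpha)-k}$; here one needs the stronger summability $k>d/(2\alpha)+\alpha/d$, which is implied by $k>-1+(3d+4)/(2\alpha)$ (indeed $-1+(3d+4)/(2\alpha)-d/(2\alpha)=(2d+4)/(2\alpha)-1=(d+2)/\alpha-1\ge 2/\alpha>\alpha/d$ when $\alpha<2$; I would spell out this arithmetic). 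Collecting the two pieces yields (\ref{second2}) with a constant $C$ depending on $\Omega,\alpha,\beta,k$ and the $C^1([0,T];\text{Dom}(\mathcal L_{\Omega,2}^k))$-norm of $f$.

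The main obstacle I anticipate is bookkeeping the exponents so that every series that appears is genuinely summable under the single hypothesis $k>-1+(3d+4)/(2\alpha)$: one must check the worst term, which is the $f(0)$-term in (\ref{second}) carrying the extra factor $\lambda_n$ before the cutoff, against the eigenfunction growth $\lambda_n^{d/(2\alpha)}$. A secondary technical point is justifying termwise differentiation in $t$: this follows from locally uniform convergence of the differentiated series, which (\ref{second2}) itself supplies once $t$ is bounded away from $0$, since $t^{\beta-1}$ is locally bounded on $(0,T]$ — so I would prove the bound first and then invoke it to license the differentiation, exactly as in the structure of Lemma \ref{lem_chen_bounds}. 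Everything else (Fubini, dominated convergence, the eigenfunction expansion of $P_s^\Omega$) is routine given the results already assembled.
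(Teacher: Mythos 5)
Your proposal is correct and follows essentially the same route as the paper's proof: rewrite the potential as $\int_0^\infty P_s^{\beta,\mathrm{kill}}P_s^\Omega f\,ds$, expand in the eigenbasis of $P^\Omega$, identify each mode via Lemma \ref{lem_identity}-(i), and control all interchanges and the termwise time derivative by summing the bounds (\ref{first}) and (\ref{second}) against $\|\psi_n\|_\infty\le c_1\lambda_n^{d/(2\alpha)}$ and the growth $\lambda_n\asymp n^{\alpha/d}$. The one slip is in the exponent bookkeeping for the $f(0)$-term: the condition for $\sum_n\lambda_n^{d/(2\alpha)-k}<\infty$ is $k>d/(2\alpha)+d/\alpha=3d/(2\alpha)$, not $k>d/(2\alpha)+\alpha/d$ as you wrote, but this corrected threshold is still implied by the hypothesis since $-1+(3d+4)/(2\alpha)=3d/(2\alpha)+2/\alpha-1>3d/(2\alpha)$ for $\alpha<2$, so the argument goes through unchanged.
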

\begin{proof}
We justify the following equalities
\begin{align*}
\mathbf E\left[\int_0^{\tau_{t,x}}f\left(-X^{t,\beta}(s), X^{x,\alpha}(s)\right)ds\right]&=\int_0^\infty P_s^{\beta,\text{kill}}P_s^{\Omega} f(t,x)\,ds\\
&=\int_0^\infty P_s^{\beta,\text{kill}}\left(\sum_{n=1}^\infty \langle f(t),\psi_n \rangle \psi_n(x) e^{-s\lambda_n}\right)\,ds\\
&=\sum_{n=1}^\infty \psi_n(x)\int_0^\infty P_s^{\beta,\text{kill}} \langle f(t),\psi_n \rangle  e^{-s\lambda_n}\,ds\\
&=\sum_{n=1}^\infty \psi_n(x)\mathbf E\left[\int_0^{\tau_0(t)} \langle f(-X^{t,\beta}(s)),\psi_n \rangle  e^{-s\lambda_n}\,ds\right]\\
&=\sum_{n=1}^\infty \psi_n(x)\myF{\langle f(\cdot),\psi_n \rangle}{\lambda_n}(t).
\end{align*}
We can apply Fubini's Theorem in the third equality as  $${\sum_{n=1}^\infty |\langle f(t),\psi_n\rangle|\|\psi_n\|_\infty}\le C\sum_{n=1}^\infty n^{(\alpha/d)\left(d/(2\alpha)-k\right)}<\infty,$$ for some constant $C>0$, each $t\ge 0$ and any $ k > 3d/(2\alpha)$, using the bounds in Lemma \ref{lem_chen_bounds}-(i) and in (\ref{bounds_pair}). We apply Lemma \ref{lem_identity}-(i) in the fifth equality as $r\mapsto\langle f(r),\psi_n \rangle\in C([0,T])$ for each $n\in\mathbb N$. The other equalities are clear.\\
For the last claim we use  the  bounds in (\ref{second}),  (\ref{bounds_pair}) and Lemma \ref{lem_chen_bounds}-(i)  to obtain
\begin{align*}
 \sum_{n=1}^\infty \left| \psi_n(x)\partial_t \myF{\langle f(t),\psi_n \rangle}{\lambda_n}(t)\right|&\le\sum_{n=1}^\infty | \psi_n(x)|\frac{c }{\lambda_n}\left(\sup_{r\in[0,T]}\left|\langle \partial_rf(r),\psi_n \rangle\right|+\frac{ \lambda_n t^{\beta-1}}{1+\lambda_n t^\beta} |\langle f(0),\psi_n \rangle|\right)\\
&\le\sum_{n=1}^\infty | \psi_n(x)| \frac{cM\lambda_n^{-k} }{\lambda_n}\left(1+\frac{ \lambda_n t^{\beta-1}}{1+\lambda_n t^\beta}\right)\\
&\le (c_1cM)\sum_{n=1}^\infty \frac{\lambda_n^{d/( 2\alpha)}\lambda_n^{-k} }{\lambda_n}\left(1+\frac{ \lambda_n t^{\beta-1}}{1+\lambda_n t^\beta}\right)\\
        & \leq  ( c_1c M ) t^{\beta - 1} \sum_{n=1}^\infty \lambda_n^{  d/(2\alpha)-k}\\ 
        & \leq (\tilde c_1c_1c M ) t^{\beta - 1} \sum_{n=1}^\infty n^{( \alpha /d )\left( d/(2\alpha)-k\right)}<\infty,
\end{align*}
 for any $ k > 3d/(2\alpha)$, where the constants $\tilde c_1,c_1,c$ and $ M$ follow the notation of the referenced inequalities, and a constant is omitted in the fourth inequality. 
\end{proof}

\begin{theorem}\label{thm_sspostRL} Let $\Omega\subset \mathbb R^d$ be a regular  set. Assume that  $\phi_0\in \text{Dom}(\mathcal L_{\Omega,2}^k)$, and  $f\in C^1([0,T];\text{Dom}(\mathcal L_{\Omega,2}^k))$ for some $k>-1+(3d+4)/(2\alpha)$, where $C^1([0,T];\text{Dom}(\mathcal L_{\Omega,2}^k))$ is defined in (\ref{spaceH}). Then 
\begin{equation}\label{regprop}
\begin{split}
& u\in C_{\partial\Omega}([0,T]\times \Omega)\cap C^{1,2}((0,T)\times \Omega),\quad \text{and}\\
&|\partial_tu(t,x)|\le Ct^{-\gamma}, \text{ for every }(t,x)\in(0,T]\times\Omega, \text{ for some }\gamma\in(0,1),\ C>0,
\end{split}
\end{equation}
where $u$ is defined in (\ref{SRpostRL}). Moreover, $ u$ is the unique classical solution to problem (\ref{postRL}). 
\end{theorem}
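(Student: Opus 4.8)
The plan is to write the candidate in (\ref{SRpostRL}) as $u=v+w$, where, by Lemma \ref{lem_chen_bounds}-(ii) and Lemma \ref{lem_nvs},
\[
v(t,x)=\sum_{n=1}^\infty E_\beta(-\lambda_n t^\beta)\langle\phi_0,\psi_n\rangle\psi_n(x),\qquad w(t,x)=\sum_{n=1}^\infty\psi_n(x)\,\myF{\langle f(\cdot),\psi_n\rangle}{\lambda_n}(t),
\]
to establish the regularity (\ref{regprop}) for each of $v$ and $w$, then to upgrade the weak solution of Theorem \ref{thm_wspostRL} to a classical one, and finally to prove uniqueness. For $v$, Lemma \ref{lem_chen_bounds}-(ii) already gives $v\in C_{\partial\Omega}([0,T]\times\Omega)\cap C^{1,2}((0,T)\times\Omega)$ together with $|\partial_t v(t,x)|\le c_3 t^{\gamma_0\beta-1}$ for some $\gamma_0\in(0,1)$; and since $k>-1+(3d+4)/(2\alpha)$ forces $\text{Dom}(\mathcal L^k_{\Omega,2})\subset C_{\partial\Omega}(\Omega)$ (by (\ref{eq_eigenvector_bound}) and Cauchy--Schwarz), $\phi_0$ may be taken continuous and $v(0,\cdot)=\phi_0$.

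For $w$ I would differentiate the defining series term by term. Each $\myF{\langle f(\cdot),\psi_n\rangle}{\lambda_n}$ belongs to $C^1([0,T])$ because $t\mapsto\langle f(t),\psi_n\rangle$ is in $C^1([0,T])$ with derivative $\langle\partial_t f(t),\psi_n\rangle$, and each $\psi_n$ belongs to $C_{\partial\Omega}(\Omega)\cap C^2(\Omega)$. Combining the $t$-uniform bound (\ref{first}) of Lemma \ref{lem_identity}-(ii) with the eigenvector bounds (\ref{eq_eigenvector_bound}) and the coefficient bounds (\ref{bounds_pair}), the series for $\nabla^j_x w$, $j=0,1,2$, converge absolutely and uniformly on $K\times[0,T]$ for every compact $K\subset\Omega$ precisely when $k>-1+(3d+2j)/(2\alpha)$; the case $j=2$ is exactly the standing hypothesis on $k$. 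Together with estimate (\ref{second2}) of Lemma \ref{lem_nvs}, which yields uniform convergence of the term-by-term $\partial_t$-series on $K\times[\varepsilon,T]$ for every $\varepsilon>0$ as well as the bound $|\partial_t w(t,x)|\le Ct^{\beta-1}$, the standard theorem on term-by-term differentiation gives $w\in C_{0,\partial\Omega}([0,T]\times\Omega)\cap C^{1,2}((0,T)\times\Omega)$. Adding the two pieces, $u=v+w$ satisfies (\ref{regprop}) with, say, $\gamma:=1-\gamma_0\beta\in(0,1)$, since on $(0,T]$ both $t^{\gamma_0\beta-1}$ and $t^{\beta-1}$ are dominated by a multiple of $t^{-\gamma}$.

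Next I would verify the equation in (\ref{postRL}). By (\ref{regprop}) we have $\partial_t u\in L^1((0,T)\times\Omega)$, so Proposition \ref{prop_csg}-(iv) makes $D^\beta_0 u(t,x)=\int_0^t\partial_r u(r,x)(t-r)^{-\beta}\Gamma(1-\beta)^{-1}\,dr$ well-defined, and a dominated-convergence argument based on $|\partial_t u(t,x)|\le Ct^{-\gamma}$ (together with the interior $C^2$-regularity of $u$) shows $(-D^\beta_0+\flap_\Omega)u+f\in C((0,T]\times\Omega)$. Since $u$ is a weak solution by Theorem \ref{thm_wspostRL} and $u,\partial_t u$ meet the hypotheses of Lemma \ref{lem_dualonsmooth}, for every $\varphi\in C^{1,2}_c((0,T)\times\Omega)$
\[
\langle(-D^\beta_0+\flap_\Omega)u+f,\,\varphi\rangle=\langle u,(-D^{\beta,*}_0+\flap_\Omega)\varphi\rangle+\langle f,\varphi\rangle=\langle -f,\varphi\rangle+\langle f,\varphi\rangle=0,
\]
hence $(-D^\beta_0+\flap_\Omega)u+f\equiv0$ on $(0,T]\times\Omega$ by continuity. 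The boundary condition $u=0$ on $[0,T]\times\partial\Omega$ is contained in $u\in C_{\partial\Omega}([0,T]\times\Omega)$, and $\lim_{t\downarrow0}|u(t,x)-\phi_0(x)|=0$ follows from $v(0,\cdot)=\phi_0$, $w(0,\cdot)=0$ and continuity, so $u$ is a classical solution. For uniqueness I would argue as in \cite[Theorem 5.1]{MeerChen}: the difference $w$ of two classical solutions is a classical solution of the homogeneous problem, its coefficients $w_n(t):=\langle w(t),\psi_n\rangle$ lie in $C([0,T])\cap C^1((0,T))$ with $w_n(0)=0$ and $w_n'\in L^1((0,T))$ (as $|\partial_t w(t,x)|\le Ct^{-\gamma}$), and differentiating under the integral with Fubini's theorem, Proposition \ref{prop_csg}-(iv), the self-adjointness (\ref{dual_dfl}) of $\flap_\Omega$ and $\mathcal L_\Omega\psi_n=-\lambda_n\psi_n$ shows $D^\beta_0 w_n=-\lambda_n w_n$, $w_n(0)=0$; by \cite[Theorem 6.5 and Theorem 7.2]{kai} this forces $w_n\equiv0$, so $w(t)=0$ in $L^2(\Omega)$ for every $t$ and $w\equiv0$.

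The main obstacle is the regularity claim (\ref{regprop}) for $w$ --- specifically the legitimacy of term-by-term time differentiation of the Mittag-Leffler series and the resulting $t^{-\gamma}$ blow-up rate; this is exactly where the differentiability-in-time assumption on $f$ is used, through the kernel estimate (\ref{second}) feeding into (\ref{second2}). The other delicate point is the identification $\langle\flap_\Omega w(t),\psi_n\rangle=-\lambda_n\langle w(t),\psi_n\rangle$ in the uniqueness argument, which relies on the approximation of $\psi_n$ supplied by Proposition \ref{prop_fl}-(i).
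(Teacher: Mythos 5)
Your proposal is correct and follows essentially the same route as the paper: the same decomposition into the two eigenfunction series, the same bounds (Lemma \ref{lem_chen_bounds}, (\ref{bounds_pair}), (\ref{first}), (\ref{second2})) justifying term-by-term differentiation via the Weierstrass M-test, the same passage from the weak solution of Theorem \ref{thm_wspostRL} to the pointwise equation via Lemma \ref{lem_dualonsmooth} and continuity, and the same uniqueness argument through the Caputo ODEs satisfied by $\langle w(t),\psi_n\rangle$ and \cite[Theorems 6.5 and 7.2]{kai}. The only differences are presentational (you make the duality step and the exponent bookkeeping slightly more explicit), and your identification of the two delicate points matches where the paper itself invests its effort.
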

\begin{proof} (The notation for constants is consistent with the referenced inequalities.)\\
By Lemma \ref{lem_chen_bounds}-(ii) and Lemma \ref{lem_nvs} we can write our candidate solution (\ref{SRpostRL}) as
\[
u(t,x)=\sum_{n=1}^\infty  E_\beta (-\lambda_n t^\beta)  \langle \phi_0,\psi_n\rangle \psi_n(x)+  \sum_{n=1}^\infty \myF{\langle f(\cdot),\psi_n \rangle}{\lambda_n}(t)\psi_n(x),
\]
and the first series enjoys the regularity properties stated in (\ref{regprop}).  We now prove the same regularity for the second series. Observe that $\sum_{n=1}^\infty \myF{\langle f(\cdot),\psi_n \rangle}{\lambda_n}(t)\psi_n(x) $ converges uniformly to a  function in $C_{\partial\Omega}([0,T]\times \Omega)$, since we have the uniform bound 
\begin{align*}
\sum_{n=1}^\infty 
    | \myF{\langle f(\cdot),\psi_n \rangle}{\lambda_n}(t)\psi_n(x)| &     \leq \sum_{n=1}^\infty c\lambda_n^{-1}\|\langle f(\cdot),\psi_n \rangle\|_{C([0,T])} c_1\lambda_n^{d/(2\alpha)} \\
		 &  \le( cc_1M)\sum_{n=1}^\infty \lambda_n^{-1-k+d/(2\alpha)}\\
		 &  \leq (\tilde c_1c_1cM)\sum_{n=1}^\infty n^{(\alpha /d)(d/(2\alpha)-k-1)}<\infty,
\end{align*}
for any $ k > -1+3d/(2\alpha)$, using the bounds in (\ref{bounds_pair}),  (\ref{first}) and  Lemma \ref{lem_chen_bounds}-(i).	 Further, for $j=1,2$, and for any $x$ in a compact subset $K$ of $\Omega$, the term-wise space derivative of $u$ can be bounded as follows,
\begin{equation}
    \begin{split}
         \sum_{n=1}^\infty |  \myF{\langle f(\cdot),\psi_n \rangle}{\lambda_n}(t) | \|\nabla^j \psi_n \|_\infty 
        &\le  \sum_{n=1}^\infty c\lambda_n^{-1}\|\langle f(\cdot),\psi_n \rangle\|_{C([0,T])}  c_1\lambda_n^{(d+4)/2\alpha}  \\
						&\le(\tilde c_1 c_1cM)\sum_{n=1}^\infty n^{(\alpha/d)\left((d+4)/(2\alpha) - k - 1\right)}<\infty,
    \end{split}
\end{equation}
as
\[ \frac\alpha{d} \left(\frac{d+4}{2\alpha} - k - 1\right) < -1 \iff k > \frac{3d+4-2\alpha }{2\alpha},  \]
 where we use the bounds in (\ref{bounds_pair}),  (\ref{first}) and  Lemma \ref{lem_chen_bounds}-(i).  Thus,  Weierstrass M-test implies that for any $t>0$, $u(t)$ is  a $C^2$ function on every $K\subset\Omega$ compact. For the time regularity we use the inequality  (\ref{second2}) from Lemma \ref{lem_nvs}\footnote{From the proof of Lemma \ref{lem_nvs} it follows that if $\phi_0=f(0)=0$, then  $\partial_t u$  is bounded.}.\\
 By Theorem \ref{thm_wspostRL}, $u$ is also a weak solution to problem (\ref{postRL}), and by Lemma \ref{lem_dualonsmooth} and standard approximation arguments, $u$ satisfies the equalities in (\ref{postRL}). Continuity at $t=0$ can be proved as in Remark \ref{rmk_cont}.

To prove uniqueness, consider two  classical solutions to problem (\ref{postRL}), denoted by $u,v$. Then $w := u-v$ is a classical solution to problem (\ref{postRL}) with $f=0$, $\phi_0=0$. Consider the continuous functions on $[0,T]$,  $t\mapsto\langle w(t),\psi_n\rangle$, $n\in\mathbb N$. If we can justify 
\begin{equation}
D^{\beta}_0\langle w(t),\psi_n\rangle= \langle D^{\beta}_0w(t),\psi_n\rangle=\langle \flap_\Omega w(t),\psi_n\rangle=\langle  w(t),\mathcal L_{\Omega,2}\psi_n\rangle=-\lambda_n \langle w(t),\psi_n\rangle,
\label{eqqq}
\end{equation}
for $t>0$, it follows by \cite[Theorem 6.5 and Theorem 7.2]{kai} that $\langle w(t),\psi_n\rangle= 0$ for every $t\in [0,T],\ n\in\mathbb N$, and we are done. The first equality is a consequence of  $|\partial_r  w(r,y)|\le Cr^{-\gamma}$, for some $\gamma\in(0,1)$. The second and fourth equalities in (\ref{eqqq}) are clear. Now, as $\psi_n\in \text{Dom}(\mathcal L_{\Omega,2}) $, there exists a sequence $\{\psi_{n,j}\}_{j\in\mathbb N}\subset C_c^\infty(\Omega)$, such that  as $j\to\infty$
\begin{equation}
\psi_{n,j}\to\psi_n,\quad \text{and}\quad\flap_\Omega\psi_{n,j} =\mathcal L_{\Omega,2}\psi_{n,j}\to\mathcal L_{\Omega,2}\psi_n,\quad \text{in }L^2(\Omega),
\label{equaa}
\end{equation}
where the  equality in (\ref{equaa}) holds by \cite[Lemma 4.1]{MeerChen}.
Combining  (\ref{equaa}) with the equality (\ref{dual_dfl}) and $\flap_\Omega w(t)\in L^2(\Omega)$ for each $t>0$,  the third equality in (\ref{eqqq}) is proven.
\end{proof}

\section{Stochastic classical solution for problem (\ref{preRL})}\label{sec_sspre}

\subsection{Stochastic representation and continuity at $t=0$}\label{sec_SR}
\begin{lemma}\label{lem_SRs}
Define the function $f_\phi:(0,T]\times\Omega\to\mathbb R$ as  
\begin{equation}
f_\phi(t,x):=\int_t^\infty (\phi(t-r,x)-\phi(t,x))\frac{-\Gamma(-\beta)^{-1}dr}{r^{1+\beta}},
\label{w_p}
\end{equation}
assuming that $\phi\in C_{\infty,\partial \Omega}((-\infty,0]\times \Omega)$,  $\phi(0)\in \text{Dom}(\mathcal L_\Omega)$, and the extension of $\phi$ to $\phi(0)$ on $(0,T]\times\overbar\Omega$ is such that 
\begin{equation}
\phi\in \text{Dom}(\mathcal L_{\beta,\Omega}^\infty),\quad\text{and}\quad\mathcal L_{\beta,\Omega}^\infty\phi= (-D^\beta_{\infty}+\mathcal L_\Omega)\phi.
\label{conditionsonphi}
\end{equation}
 Then $f_\phi\in C([0, T]\times\overbar\Omega)$ and the function $u$ defined in (\ref{SRpostRL}) for $f=f_\phi$ and $\phi_0=\phi(0)$, equals the function $\tilde u$ defined in (\ref{SR}) for $g=0$, on $(0,T]\times \Omega$.
\end{lemma}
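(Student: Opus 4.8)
The plan is to prove two things: first, that $f_\phi \in C([0,T]\times\overbar\Omega)$, and second, that the two stochastic representations agree. For the continuity claim, note that by the definition of $D^\beta_\infty$ in (\ref{gen}) and the decomposition of the Marchaud derivative, one has for $t>0$ the identity
\begin{equation*}
f_\phi(t,x) = -D^\beta_\infty\phi(t,x) + \int_0^t(\phi(t-r,x)-\phi(t,x))\frac{\Gamma(-\beta)^{-1}dr}{r^{1+\beta}},
\end{equation*}
where the integral on the right is precisely $D^\beta_0\phi(0)(t,x)$ evaluated for the extension equal to $\phi(0)$ on $(0,T]$; since $\phi$ extended by $\phi(0)$ lies in $\text{Dom}(\mathcal L_{\beta,\Omega}^\infty)$ with $\mathcal L_{\beta,\Omega}^\infty\phi = (-D^\beta_\infty + \mathcal L_\Omega)\phi$ by assumption (\ref{conditionsonphi}), the function $-D^\beta_\infty\phi$ is continuous on $[0,T]\times\overbar\Omega$ (it equals $\mathcal L_{\beta,\Omega}^\infty\phi - \mathcal L_\Omega\phi$, both continuous), while the remaining integral is easily checked to be continuous up to $t=0$ (where it vanishes) using $\phi(0)\in C_{\partial\Omega}(\Omega)$ and the integrability of $r^{-1-\beta}$ near $t$. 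Hence $f_\phi\in C([0,T]\times\overbar\Omega)$, and in particular $f_\phi \in C_{\partial\Omega}([0,T]\times\Omega)$ since $\phi$ vanishes on $\partial\Omega$.

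For the equality of the two representations, fix $(t,x)\in(0,T]\times\Omega$. The function $\tilde u$ from (\ref{SR}) with $g=0$ is
\begin{equation*}
\tilde u(t,x) = \mathbf E\left[\phi\left(-X^{t,\beta}(\tau_0(t)), X^{x,\alpha}(\tau_0(t))\right)\mathbf 1_{\{\tau_0(t)<\tau_\Omega(x)\}}\right],
\end{equation*}
while $u$ from (\ref{SRpostRL}) with $f = f_\phi$, $\phi_0 = \phi(0)$ is
\begin{equation*}
u(t,x) = \mathbf E\left[\phi(0)\left(X^{x,\alpha}(\tau_0(t))\right)\mathbf 1_{\{\tau_0(t)<\tau_\Omega(x)\}}\right] + \mathbf E\left[\int_0^{\tau_{t,x}} f_\phi\left(-X^{t,\beta}(s), X^{x,\alpha}(s)\right)ds\right].
\end{equation*}
The idea is to apply a Dynkin-type formula to the space-time process $\{(-X^{t,\beta}(s), X^{x,\alpha}_\Omega(s))\}_{s\ge0}$ with the function $\phi$ (extended by $\phi(0)$ on $(0,T]\times\overbar\Omega$), stopped at the finite stopping time $\tau_{t,x} = \tau_0(t)\wedge\tau_\Omega(x)$. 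Since $\phi\in\text{Dom}(\mathcal L_{\beta,\Omega}^\infty)$ and the semigroup $P^{\beta,\Omega,\infty}$ is the one generated by this process (Lemma \ref{thm_semigroups}-(iii)), Dynkin's formula (\cite[Theorem 5.1]{dynkin1965}) gives
\begin{equation*}
\mathbf E\left[\phi\left(-X^{t,\beta}(\tau_{t,x}), X^{x,\alpha}(\tau_{t,x})\right)\right] - \phi(t,x) = \mathbf E\left[\int_0^{\tau_{t,x}} \mathcal L_{\beta,\Omega}^\infty\phi\left(-X^{t,\beta}(s), X^{x,\alpha}(s)\right)ds\right].
\end{equation*}

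Now I would analyze both sides. On the event $\{\tau_0(t)<\tau_\Omega(x)\}$ one has $\tau_{t,x}=\tau_0(t)$ and $X^{x,\alpha}(\tau_{t,x})\in\Omega$, so the stopped value of $\phi$ is the genuine $\phi\left(-X^{t,\beta}(\tau_0(t)), X^{x,\alpha}(\tau_0(t))\right)$ appearing in $\tilde u$; on the complementary event $\{\tau_\Omega(x)\le\tau_0(t)\}$ one has $X^{x,\alpha}(\tau_{t,x})\in\partial\Omega$ (by quasi-left-continuity / the regularity of $\Omega$, the exit is to the boundary for the symmetric stable process — but in fact we only need that $\phi$ is defined and equals $0$ there, which holds since $\phi\in C_{b,\partial\Omega}$-type space vanishing on $\partial\Omega$), so the stopped value is $0$. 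Wait — I should be careful: $X^{x,\alpha}$ jumps, so at $\tau_\Omega$ it lands outside $\overbar\Omega$ where $\phi$ need not be defined. The cleaner route is to use the absorbed/killed version $X^{x,\alpha}_\Omega$ together with the convention built into the semigroup $P^{\beta,\Omega,\infty}$ on $C_{\infty,\partial\Omega}$, i.e. work with $\phi\cdot\mathbf 1_{\{s<\tau_\Omega(x)\}}$ as in the definition of $\mathcal L$ in (\ref{bvpp}); then the boundary value is $0$ by construction. With this, the left-hand side of Dynkin's formula becomes exactly $\tilde u(t,x) - \phi(t,x)$. For the right-hand side, use the hypothesis $\mathcal L_{\beta,\Omega}^\infty\phi = (-D^\beta_\infty + \mathcal L_\Omega)\phi$; combining with the identity $f_\phi = -D^\beta_\infty\phi + (\text{the }D^\beta_0\text{-type correction term})$ derived above, and with the Dynkin identity of Lemma \ref{lem_sdoggs}-(iii), namely $(-\mathcal L_{\beta,\Omega}^{\text{kill}})^{-1}(\mathcal L_\Omega\phi(0))(t,x) = \mathbf E[\phi(0)(X^{x,\alpha}(\tau_{t,x}))] - \phi(0)(x)$, one rearranges the right-hand side into $\mathbf E\left[\int_0^{\tau_{t,x}}f_\phi(\cdots)ds\right] + \mathbf E[\phi(0)(X^{x,\alpha}(\tau_{t,x}))\mathbf 1_{\{\tau_0(t)<\tau_\Omega(x)\}}] - \phi(t,x)$, which together with the left-hand side yields $\tilde u = u$. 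The main obstacle I anticipate is bookkeeping the boundary contributions correctly — ensuring the process's values at the exit time $\tau_\Omega(x)$ are handled via the killed semigroup so that $\phi$ only ever needs to be evaluated on $(-\infty,T]\times\overbar\Omega$ — and matching the $D^\beta_\infty$-versus-$D^\beta_0$ split so that the "extension by $\phi(0)$" correction term is exactly absorbed by the Dynkin identity for $\mathcal L_\Omega\phi(0)$; both are conceptually routine but require care.
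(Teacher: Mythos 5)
Your proposal is correct and takes essentially the same route as the paper: continuity of $f_\phi$ comes from $f_\phi=-D^\beta_\infty\phi=\mathcal L_{\beta,\Omega}^{\infty}\phi-\mathcal L_\Omega\phi$ via (\ref{conditionsonphi}), and the equality of the two representations follows from Dynkin's formula for $P^{\beta,\Omega,\infty}$ applied to $\phi$ combined with Dynkin's formula for $P^\Omega$ applied to $\phi(0)$ (the paper runs the identical chain of equalities starting from $u$ rather than $\tilde u$, with the killed-process convention handling the boundary exactly as you describe). The only point worth tightening is that your ``$D^\beta_0$-type correction term'' $\int_0^t(\phi(t-r,x)-\phi(t,x))\Gamma(-\beta)^{-1}r^{-1-\beta}\,dr$ is identically zero because the extension of $\phi$ is constant in time on $[0,T]$, so $f_\phi=-D^\beta_\infty\phi$ exactly.
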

\begin{proof} The first claim follows from $f_\phi = -D^\beta_{\infty} \phi\in C([0, T]\times\overbar\Omega)$, using (\ref{conditionsonphi}) and $\mathcal L_\Omega\phi(t,x)=\mathcal L_\Omega\phi(0,x)$ for all $(t,x)\in[0,T]\times \overbar\Omega$. Recall that we write $\tau_{t,x}= \tau_0(t)\wedge\tau_\Omega(x)$. Fix $(t,x)\in (0,T]\times \Omega$. It is enough to justify the following equalities
\begin{align*}
u(t,x)&=\mathbf E\left[\phi(0,X^{x,\alpha}(\tau_0(t))\mathbf 1_{\{\tau_0(t)<\tau_\Omega(x)\}} +\int_0^{\tau_{t,x}} f_\phi\left(-X^{t,\beta}(s),X^{x,\alpha}(s)\right)ds\right]\\
&=\mathbf E\left[\phi(0,x)+\int_0^{\tau_{t,x}} \mathcal L_\Omega  \phi\left(0,X^{x,\alpha}(s)\right)ds +\int_0^{\tau_{t,x}} f_\phi\left(-X^{t,\beta}(s),X^{x,\alpha}(s)\right)ds\right]\\
&=\mathbf E\left[ \int_0^{\tau_{t,x}} \mathcal L_\Omega  \phi\left(-X^{t,\beta}(s),X^{x,\alpha}(s)\right) -D^{\beta}_{\infty} \phi\left(-X^{t,\beta}(s),X^{x,\alpha}(s)\right)ds\right]+\phi(0,x)\\
&=\mathbf E\left[ \int_0^{\tau_{t,x}} \mathcal L_{\beta,\Omega}^\infty \phi\left(-X^{t,\beta}(s),X^{x,\alpha}(s)\right)ds\right]+\phi(0,x)\\
&=\mathbf E\left[  \phi\left(-X^{t,\beta}(\tau_{t,x}),X^{x,\alpha}(\tau_{t,x})\right)\right]\pm\phi(0,x).
\end{align*}
For the second equality we use Dynkin formula with Lemma \ref{thm_semigroups}-(i) and $\phi(0)\in \text{Dom}(\mathcal L_\Omega)$; for the third equality, as we extended $\phi(t,x)=\phi(0,x)$ on $[0,T]\times\Omega$,  we use the identities  $f_\phi(t,x)=-D^\beta_{\infty}\phi(t,x)$ and $\mathcal L_\Omega  \phi(0,x)=\mathcal L_\Omega  \phi(t,x)$ on $(0,T]\times\Omega$;  in the fourth equality we use assumption (\ref{conditionsonphi}); the fifth equality is again an application of Dynkin formula  with Lemma \ref{thm_semigroups}-(iii) and $\phi(t,x)=\phi(0,x)$ on $(0,T]\times  \Omega$. 
\end{proof}

\begin{corollary}\label{cor_SRs}
If $\phi\in C_{b,\partial\Omega}^1((-\infty,0]\times\Omega)$, then for $(t,x)\in(0,T]\times\Omega$
\begin{align}\label{eqdyn}
\mathbf E\left[\phi\big(0,X^{x,\alpha}(\tau_{t,x})\big) +\int_0^{\tau_{t,x}} f_\phi\left(-X^{t,\beta}(s),X^{x,\alpha}(s)\right)ds\right]=\mathbf E\left[  \phi\left(-X^{t,\beta}(\tau_{t,x}),X^{x,\alpha}(\tau_{t,x})\right)\right].
\end{align}

\end{corollary}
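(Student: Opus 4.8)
The plan is to deduce \eqref{eqdyn} from (the proof of) Lemma \ref{lem_SRs} by a density argument, since the hypothesis $\phi\in C^1_{b,\partial\Omega}((-\infty,0]\times\Omega)$ neither contains nor is contained in the hypotheses of Lemma \ref{lem_SRs}. First observe that \eqref{eqdyn} is exactly the identity obtained by reading off the chain of equalities in the proof of Lemma \ref{lem_SRs}: its left-hand side is the function $u(t,x)$ of \eqref{SRpostRL} with $f=f_\phi$ and $\phi_0=\phi(0)$ (throughout, $X^{x,\alpha}$ is understood as the absorbed process $X^{x,\alpha}_\Omega$ and $\phi$ is extended by $\phi(0,\cdot)$ on $(0,T]\times\overbar\Omega$ and by $0$ at the cemetery, so that the explicit indicators in \eqref{SR}--\eqref{SRpostRL} are automatic), while its right-hand side is the last line of that chain. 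Hence \eqref{eqdyn} holds for every $\phi$ satisfying the hypotheses of Lemma \ref{lem_SRs}, and it remains only to trade those hypotheses for $\phi\in C^1_{b,\partial\Omega}((-\infty,0]\times\Omega)$.

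The next step I would record is a stability statement: if $\phi_m\to\phi$ pointwise on $(-\infty,0]\times\overbar\Omega$ with $\sup_m(\|\phi_m\|_\infty+\|\partial_t\phi_m\|_\infty)<\infty$, and \eqref{eqdyn} holds for each $\phi_m$, then it holds for $\phi$. The terms $\mathbf E[\phi_m(0,X^{x,\alpha}_\Omega(\tau_{t,x}))]$ and $\mathbf E[\phi_m(-X^{t,\beta}(\tau_{t,x}),X^{x,\alpha}_\Omega(\tau_{t,x}))]$ converge by dominated convergence (bounded integrands, pointwise convergence along the sample paths, finite underlying measure). For the integral term one checks from the defining formula \eqref{w_p} that $f_{\phi_m}\to f_\phi$ pointwise on $(0,\infty)\times\overbar\Omega$ and $\sup_m\|f_{\phi_m}\|_{L^\infty((0,T)\times\overbar\Omega)}\le C\sup_m(\|\phi_m\|_\infty+\|\partial_t\phi_m\|_\infty)$: in that integral the increment $\phi_m(s'-r,x')-\phi_m(0,x')$ is $O(r-s')$ near the lower endpoint by the uniform Lipschitz bound in time and $O(1)$ for large $r$, which simultaneously dominates the integrand by an $s'$-independent integrable function and licenses dominated convergence. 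Then $\mathbf E[\int_0^{\tau_{t,x}}f_{\phi_m}(-X^{t,\beta}(s),X^{x,\alpha}(s))\,ds]\to\mathbf E[\int_0^{\tau_{t,x}}f_\phi(\cdots)\,ds]$ by dominated convergence with dominating variable $C\tau_{t,x}$, using $\mathbf E[\tau_{t,x}]\le\mathbf E[\tau_0(t)]=t^\beta/\Gamma(\beta+1)<\infty$ from \eqref{ident1}.

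The heart of the proof is then to approximate a general $\phi\in C^1_{b,\partial\Omega}((-\infty,0]\times\Omega)$ by functions $\phi_m$ that satisfy the hypotheses of Lemma \ref{lem_SRs} and the bounds above. I would build $\phi_m$ in three moves: (1) flatten $\phi$ in time near $0$, replacing $t\mapsto\phi(t,\cdot)$ by a smoothing of $t\mapsto\phi(t\wedge(-1/m),\cdot)$, which is $C^1$ in time and constant in time on a neighbourhood of $0$ (so that its extension by the value at $0$ is $C^1$ across $t=0$), while keeping $\|\cdot\|_\infty\le\|\phi\|_\infty$ and $\|\partial_t\cdot\|_\infty\le\|\partial_t\phi\|_\infty$; (2) smooth in space by applying $P^\Omega_{s_m}$ with $s_m\downarrow0$, which puts each time-slice into $\text{Dom}(\mathcal L_\Omega)$ — indeed into $\text{Dom}(\mathcal L_{\Omega,2}^k)$ for all $k$, so that the eigenfunction expansion of the slice and its termwise $\mathcal L_\Omega$-image converge uniformly — and which commutes with $\partial_t$ and is a sup-norm contraction; (3) cut off in the distant past by multiplying with $\chi_m\in C^\infty((-\infty,T])$ equal to $1$ on $[-m,0]$ and on a neighbourhood of $[0,T]$, equal to $0$ below $-m-1$, with $\sup_m\|\chi_m'\|_\infty<\infty$, so $\phi_m$ vanishes as $t\to-\infty$. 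The extension $\psi_m$ of $\phi_m$ is then a uniform limit of finite tensor sums in $\text{Span}\{C^1_\infty((-\infty,T])\cdot\text{Dom}(\mathcal L_\Omega)\}$ (truncate the eigenfunction expansion of each slice; the coefficients carry a factor $e^{-\lambda_n s_m}$) with convergence also of the termwise $(\mathcal L_\beta^\infty+\mathcal L_\Omega)$-images, whence by Lemma \ref{thm_semigroups}-(iii) $\psi_m\in\text{Dom}(\mathcal L_{\beta,\Omega}^\infty)$ with $\mathcal L_{\beta,\Omega}^\infty\psi_m=(-D^\beta_\infty+\mathcal L_\Omega)\psi_m$, i.e. \eqref{conditionsonphi} holds; also $\phi_m(0,\cdot)\in\text{Dom}(\mathcal L_\Omega)$ and $\phi_m\in C_{\infty,\partial\Omega}((-\infty,0]\times\Omega)$, so Lemma \ref{lem_SRs} applies to $\phi_m$ and \eqref{eqdyn} holds for it. Finally $\|\phi_m\|_\infty\le\|\phi\|_\infty$, $\|\partial_t\phi_m\|_\infty\le\|\partial_t\phi\|_\infty+\sup_m\|\chi_m'\|_\infty\|\phi\|_\infty$, and for each fixed $(t,x)$ eventually $\chi_m(t)=1$, the flattening is inactive at $t$, and $P^\Omega_{s_m}[\phi(t,\cdot)](x)\to\phi(t,x)$, so $\phi_m\to\phi$ pointwise; combining with the stability statement yields \eqref{eqdyn}.

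The main obstacle is precisely this construction: one must approximate $\phi$ by functions regular enough to enter the class where Lemma \ref{lem_SRs} — in particular the generator identity \eqref{conditionsonphi} — is available, while (i) keeping the sup-norms and time-derivatives uniformly bounded, which forbids a naive finite eigenfunction truncation of the spatial part and is why $P^\Omega_{s_m}$ is kept intact and only truncated when verifying membership in $\text{Dom}(\mathcal L_{\beta,\Omega}^\infty)$, and (ii) avoiding the corner in time that the extension by $\phi(0,\cdot)$ would otherwise create at $t=0$, which is removed by the time-flattening of step (1). Everything else is routine dominated convergence built on the finiteness $\mathbf E[\tau_0(t)]<\infty$.
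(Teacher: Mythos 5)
Your proof is correct, and it shares the paper's overall skeleton: read \eqref{eqdyn} off the chain of equalities in Lemma \ref{lem_SRs}, prove stability of \eqref{eqdyn} under pointwise convergence with a uniform bound on $\|\phi_m\|_\infty+\|\partial_t\phi_m\|_\infty$ (exactly the quantity that controls $f_{\phi_m}$ through \eqref{w_p}), and then manufacture an approximating sequence inside the class where Lemma \ref{lem_SRs} applies. Where you genuinely diverge is in that last step. The paper works in two stages: it first treats $\phi$ compactly supported in $(-\infty,0]\times\overbar\Omega$ with $\partial_t\phi(0)=0$, approximating $\phi$ \emph{and} $\partial_t\phi$ uniformly by finite tensor sums in $\text{Span}\{C^1_\infty((-\infty,0])\cap\{f'(0)=0\}\cdot C_{\partial\Omega}(\Omega)\}$ (recycling the Stone--Weierstrass density argument from the proof of Lemma \ref{thm_semigroups}), then trades the spatial factors for elements of $\text{Dom}(\mathcal L_\Omega)$ and applies DCT; a second, pointwise approximation removes the compact support and the condition $\partial_t\phi(0)=0$. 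You instead mollify in one pass: your time-flattening near $0$ plays the role of the paper's hypothesis $\partial_t\phi(0)=0$ (both exist so that the constant extension past $t=0$ is genuinely $C^1$), your cutoff $\chi_m$ plays the role of the compact support, and the spatial regularization is done by $P^\Omega_{s_m}$ rather than by abstract density, with membership in $\text{Dom}(\mathcal L_{\beta,\Omega}^\infty)$ and the identity \eqref{conditionsonphi} checked through the eigenfunction expansion of $P^\Omega_{s_m}$ and closedness of the generator. This buys a single, explicit approximation with transparent norm control, at the price of importing spectral input that the paper defers to Section \ref{sec_existence}: you need $\sum_n e^{-\lambda_n s}\lambda_n^{p}<\infty$ (finiteness of the heat trace of $P^\Omega_s$, i.e.\ the two-sided Weyl bound $\lambda_n\asymp n^{\alpha/d}$, not just the one-sided inequality recorded in Lemma \ref{lem_chen_bounds}), and you should state explicitly that $P^\Omega_s g\in\text{Dom}(\mathcal L_\Omega)$ for $s>0$ — this is not automatic for a Feller semigroup and is exactly what the uniformly convergent expansion plus closedness of $\mathcal L_\Omega$ delivers. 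The paper's route avoids all spectral theory here and stays within the soft semigroup/density toolkit of Lemma \ref{thm_semigroups}.
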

\begin{proof} \emph{Step 1.}
We  prove (\ref{eqdyn}) for $\phi\in C_{\infty,\partial\Omega}^1((-\infty,0]\times\Omega)\cap \{\partial_t f(0)=0\}$ with compact support in $(-\infty,0]\times\overbar\Omega$. For such $\phi$, let $K>0$ such that $\phi$ is supported in $(-K,0]\times\overbar\Omega$. By the same arguments as in the proof of Lemma \ref{thm_semigroups}-(ii), it follows that $\text{Span}\{C([-K,0])\cap\{f(-K)=f(0)=0\}\cdot C_{\partial\Omega}(\Omega)\}$ is dense in $C_{\partial\Omega}([-K,0]\times\Omega)\cap\{f(-K)=f(0)=0\}$ with respect to the supremum norm. We can use this fact to construct a sequence $\{\phi_n\}_{n\in\mathbb N}\in \text{Span}\{C_{\infty}^1(-\infty,0])\cap\{f'(0)=0\}\cdot C_{\partial\Omega}(\Omega)\}$ such that 
\[
\|\phi_n-\phi\|_{C((-\infty,0]\times\overbar\Omega)}+\|\partial_t(\phi_n-\phi)\|_{C((-\infty,0]\times\overbar\Omega)}\to 0,\quad\text{as }n\to\infty.
\]
Moreover, it follows that $f_{\phi_n}\to f_\phi$  as $n\to\infty$ pointwise on $[0,T]\times\Omega$ and $\sup_n\|f_{\phi_n}\|_{C([0,T]\times\overbar\Omega)}$ is finite. It remains to show that (\ref{eqdyn}) holds for functions in $\text{Span}\{C_{\infty}^1(-\infty,0])\cap\{f'(0)=0\}\cdot C_{\partial\Omega}(\Omega)\}$, as  DCT applied to the sequences above yields the claim. By Lemma \ref{thm_semigroups}-(iii) with $C_\beta^\infty=C_\infty^1((-\infty,T])$, Proposition \ref{prop_csg} and Lemma \ref{lem_SRs}, equality (\ref{eqdyn})  holds for $\phi\in\text{Span}\{C_{\infty}^1((-\infty,0])\cap\{f'(0)=0\}\cdot\text{Dom}(\mathcal L_\Omega))\} $. As  $\text{Dom}(\mathcal L_\Omega)$ is dense in $C_{\partial\Omega}(\Omega)$, equality (\ref{eqdyn}) holds for    $\phi\in \text{Span}\{C_{\infty}^1((-\infty,0])\cap\{f'(0)=0\}\cdot C_{\partial\Omega}(\Omega)\} $ by DCT. \\
\emph{Step 2.} For  $\phi\in C^1_{b,\partial\Omega}((-\infty,0]\times\Omega)$,  take a sequence $\{\phi_n\}_{n\in\mathbb N}\subset C_{\infty,\partial\Omega}^1((-\infty,0]\times\Omega)\cap \{\partial_t f(0)=0\}$  compactly supported in $(-\infty,0]\times\overbar\Omega$, such that $\phi_n\to\phi$  pointwise on $(-\infty,0]\times\Omega$, and  $\sup_n\|\phi_n\|_{C((-\infty,0]\times\overbar\Omega)}+\sup_n\|\partial_t\phi_n\|_{C((-\infty,0]\times\overbar\Omega)}<\infty$. Then $f_{\phi_n}\to f_\phi$  pointwise on $[0,T]\times\Omega$ and $\sup_n\|f_{\phi_n}\|_{C([0,T]\times\overbar\Omega)}<\infty$. Finally, apply DCT to both sides of (\ref{eqdyn}).
\end{proof}

\begin{remark}\label{rmk_cont}
If we can apply Corollary \ref{cor_SRs}, then we can prove continuity at $t=0$ for the solution (\ref{SR}) via the following argument 
\begin{align*}
|\text{Formula (\ref{SRpostRL})}-\phi_0(x)|&\le|\mathbf E\left[\phi_0\left(X^{x,\alpha}(\tau_0(t)\wedge \tau_\Omega(x))\right)-\phi_0(x)\right]|+\|f\|_\infty \mathbf E\left[\tau_0(t)\right]\\
&= o_{t\downarrow 0}(1)\ +\|f\|_\infty\frac{t^\beta}{\Gamma(\beta+1)},
\end{align*}
for each $x\in\Omega$, using stochastic continuity of the process\footnote{This follows as $X^{x,\alpha}(s)$ is right continuous and $\tau_0(t)$ is right continuous, non-decreasing with $\tau_0(0)=0$.} $t\mapsto X^{x,\alpha}(\tau_0(t))$ at $t=0$. One could also use stochastic continuity at $t=0$ of $-X^{t,\beta}(\tau_0(t))=t-X^{\beta}(\tau_0(t))$, bypassing  Corollary \ref{cor_SRs}. In Proposition \ref{thm_ctsat0} in the Appendix we prove continuity at $t=0$ by proving a bound on big overshootings $-X^{t,\beta}(\tau_0(t))$ for small times.
\end{remark}

\subsection{Equivalence of the classical solutions to problems (\ref{preRL}) and (\ref{postRL})}\label{sec_eoss}

\begin{definition} Let $\phi \in C_{b,\partial\Omega}((-\infty,0]\times\Omega)$ and $g\in C((0,T]\times\Omega)$. A function $\tilde u\in C_{b,\partial\Omega}((-\infty,T]\times \Omega)\cap C^{1,2}((0,T)\times \Omega)$  such that $|\partial_t\tilde u(t,x)|\le Ct^{-\gamma}, \text{ for every }(t,x)\in(0,T]\times\Omega, \text{ for some }\gamma\in(0,1),\ C>0$, is said to be a \emph{classical solution to problem} (\ref{preRL}) if $\tilde u$ satisfies the identities  in (\ref{preRL}), and for every $x\in \Omega$
$$
\lim_{t\downarrow 0}|\tilde u(t,x)-\phi(0,x)|=0.
$$
\end{definition}
\begin{lemma}\label{lem_equivsol}
Let  $\phi\in C_{b,\partial\Omega}((-\infty,0]\times\Omega)$ such that $f_\phi\in C((0,T]\times\Omega)$, where  $f_\phi$  is defined in (\ref{w_p}), and let $g \in C((0,T]\times\Omega)$. Then, if  $u$ is a classical solution to problem (\ref{postRL}) with $f=f_\phi+g$ and $\phi_0=\phi(0)$, then the extension 
\begin{equation*}
\tilde u:=\left\{\begin{split}
u,\quad\text{in }& (0,T]\times\overbar\Omega,\\
\phi,\quad\text{in }& (-\infty,0]\times\Omega,
\end{split}
\right.
\label{ext}
\end{equation*}
 is a classical solution to problem (\ref{preRL}). Conversely, if  $\tilde u$ is a classical solution to problem (\ref{preRL}), then the restriction of $\tilde u$ to $[0,T]\times \overbar\Omega$ is a classical solution to problem (\ref{postRL}) with   $f=f_\phi+g$ and  $\phi_0=\phi(0)$.
\end{lemma}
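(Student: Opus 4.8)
The plan is to verify, by a direct computation, that the defining identities of one problem are equivalent to those of the other, once one substitutes the values of $\phi$. The key observation is the pointwise decomposition of the time operators: for a function that equals $\phi$ on $(-\infty,0]\times\Omega$ and $u$ on $(0,T]\times\overbar\Omega$, and for $t\in(0,T]$, we split the integral defining $D^\beta_\infty$ at $r=t$:
\begin{equation*}
D^\beta_\infty\tilde u(t,x)=\int_0^t(\tilde u(t-r,x)-\tilde u(t,x))\frac{\Gamma(-\beta)^{-1}dr}{r^{1+\beta}}+\int_t^\infty(\phi(t-r,x)-\tilde u(t,x))\frac{\Gamma(-\beta)^{-1}dr}{r^{1+\beta}}.
\end{equation*}
Adding and subtracting $\phi(0,x)$ inside the second integral, and using $\int_t^\infty\Gamma(-\beta)^{-1}r^{-1-\beta}dr=-t^{-\beta}/\Gamma(1-\beta)$ together with $\tilde u=u$ and $u(0)=\phi(0)$, this rearranges to $D^\beta_\infty\tilde u(t,x)=D^\beta_0u(t,x)-f_\phi(t,x)$, exactly as in the introductory computation in the excerpt (with the sign of $f_\phi$ read off from (\ref{w_p})). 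Hence the PDE $D^\beta_\infty\tilde u=\flap_\Omega\tilde u+g$ on $(0,T]\times\Omega$ holds if and only if $D^\beta_0u=\flap_\Omega u+(f_\phi+g)$ there, since $\flap_\Omega\tilde u=\flap_\Omega u$ on $\Omega$.

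Next I would match up the remaining data: the boundary condition $\tilde u=0$ on $[0,T]\times\partial\Omega$ is literally the boundary condition in (\ref{postRL}) together with $\phi=0$ on $\partial\Omega$ (which is part of $\phi\in C_{b,\partial\Omega}((-\infty,0]\times\Omega)$), and the initial condition $u=\phi_0=\phi(0)$ on $\{0\}\times\Omega$ is the restriction to $t=0$ of $\tilde u=\phi$ on $(-\infty,0]\times\Omega$. For the regularity: if $u$ is a classical solution to (\ref{postRL}) then $\tilde u$ restricted to $(0,T]\times\overbar\Omega$ inherits $C_{\partial\Omega}([0,T]\times\overbar\Omega)\cap C^{1,2}((0,T)\times\Omega)$ and the bound $|\partial_t\tilde u|\le Ct^{-\gamma}$; on $(-\infty,0]\times\Omega$ we have $\tilde u=\phi\in C_{b,\partial\Omega}((-\infty,0]\times\Omega)$; and the two pieces glue to an element of $C_{b,\partial\Omega}((-\infty,T]\times\Omega)$ because $u$ is continuous up to $t=0$ with $u(0)=\phi(0)$, which is precisely the continuity-at-$0$ clause in the definition of classical solution to (\ref{postRL}). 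The continuity statement $\lim_{t\downarrow0}|\tilde u(t,x)-\phi(0,x)|=0$ transfers verbatim. The converse direction is the same chain of equivalences read backwards: from a classical solution $\tilde u$ of (\ref{preRL}), its restriction $u$ to $[0,T]\times\overbar\Omega$ has all the required regularity, $u(0)=\phi(0)$, and the displayed splitting of $D^\beta_\infty$ shows $D^\beta_0u=\flap_\Omega u+(f_\phi+g)$ on $(0,T]\times\Omega$.

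The only genuinely delicate point is justifying the splitting of $D^\beta_\infty\tilde u$ and the rearrangement into $D^\beta_0u-f_\phi$ at the level of absolutely convergent integrals, i.e.\ checking that each of the two pieces $\int_0^t$ and $\int_t^\infty$ is finite for $t\in(0,T]$. For the near-diagonal piece $\int_0^t$ this follows from $|\partial_tu(t,x)|\le Ct^{-\gamma}$ with $\gamma<1$ (so the increment $u(t-r,x)-u(t,x)$ is $O(r^{1-\gamma})$ near $r=0$, integrable against $r^{-1-\beta}$ once we also use, say, $\beta<1-\gamma$ or simply split further and use boundedness of $u$ away from $r=0$); for the tail piece $\int_t^\infty$ it follows from $\tilde u$ being bounded and $\int_t^\infty r^{-1-\beta}dr<\infty$. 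The hypothesis $f_\phi\in C((0,T]\times\Omega)$ is exactly what makes $f=f_\phi+g$ an admissible forcing term and guarantees the second integral above equals $-f_\phi(t,x)+\phi(0,x)\int_t^\infty\Gamma(-\beta)^{-1}r^{-1-\beta}dr-\tilde u(t,x)\int_t^\infty\Gamma(-\beta)^{-1}r^{-1-\beta}dr$ with all terms finite. I expect this bookkeeping to be the main obstacle; everything else is a matching of definitions.
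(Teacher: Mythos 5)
Your proposal is correct and follows essentially the same route as the paper's proof: split the integral defining $D^\beta_\infty\tilde u$ at $r=t$, add and subtract $\phi(0,x)$ in the tail, and identify the result as $-D^\beta_0 u + f_\phi$ (using $u(0)=\phi(0)$), after which the matching of boundary data, initial data and regularity is immediate. The extra bookkeeping you supply on absolute convergence of the two pieces is sound (for the near-diagonal piece, note that for fixed $t>0$ the bound $|\partial_s u(s,x)|\le C(t/2)^{-\gamma}$ on $[t/2,t]$ already gives an $O(r)$ increment, so no relation between $\beta$ and $\gamma$ is needed) and only makes explicit what the paper leaves implicit.
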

\begin{proof}
The equivalence of convergence to initial data and the required regularities are clear. It is also immediate that $\flap_\Omega  u = \flap_\Omega  \tilde u$ on $(0,T]\times\Omega$. Write $\nu(r)=-\Gamma(-\beta)^{-1}r^{-1-\beta}$. On $(0,T]\times\Omega$ we have the equality
\begin{align*}
-D_{\infty}^{\beta}   \tilde u(t,x)=&\ \int_0^{\infty}(\tilde u(t-r,x)-\tilde u(t,x))\,\nu(r)dr\\
=&\ \int_0^{t}(\tilde u(t-r,x)-\tilde u(t,x))\,\nu(r)dr+\int_t^{\infty}\phi(t-r,x)\,\nu(r)dr\\
&\quad-\tilde u(t,x))\int_t^\infty\nu(r)dr \pm \phi(0,x)\int_t^\infty\,\nu(r)dr\\
=&\ -D_{0}^{\beta}   \tilde u(t,x)+f_\phi(t,x).
\end{align*}
This is enough to prove both directions.
\end{proof}

\subsection{Main result}

\begin{theorem}\label{thm_main} Let $\Omega\subset \mathbb R^d$ be a regular set. Assume that $\phi\in C_{b,\partial\Omega}^1((-\infty,0]\times\Omega)$ with $\phi(0)\in \text{Dom}(\mathcal L_{\Omega,2}^k)$ and $f_\phi,g\in C^1([0,T];\text{Dom}(\mathcal L_{\Omega,2}^k))$,  for some $k>-1+(3d+4)/(2\alpha)$, where $f_\phi$ is defined in (\ref{w_p}) and  $C^1([0,T];\text{Dom}(\mathcal L_{\Omega,2}^k))$ is defined in (\ref{spaceH}). Then 
\begin{align*}
&\tilde u\in C_{b,\partial\Omega}((-\infty,T]\times \Omega)\cap C^{1,2}((0,T)\times \Omega),\quad \text{and}\\
&|\partial_t\tilde u(t,x)|\le Ct^{-\gamma}, \text{ for every }(t,x)\in(0,T]\times\Omega, \text{ for some }\gamma\in(0,1),\ C>0,
\end{align*}
where $\tilde u$ is defined as in (\ref{SR}).
Moreover, $\tilde u$ is the unique classical solution to problem (\ref{preRL}). 
\end{theorem}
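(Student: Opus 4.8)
The plan is to deduce Theorem \ref{thm_main} from Theorem \ref{thm_sspostRL} using the equivalence of classical solutions in Lemma \ref{lem_equivsol} and the identification of the two stochastic representations in Corollary \ref{cor_SRs}. First I would set $f:=f_\phi+g$ and $\phi_0:=\phi(0)$. Since $C^1([0,T];\text{Dom}(\mathcal L_{\Omega,2}^k))$ is a linear space containing both $f_\phi$ and $g$ by hypothesis, it contains $f$; together with $\phi_0\in\text{Dom}(\mathcal L_{\Omega,2}^k)$ this is precisely the data for which Theorem \ref{thm_sspostRL} applies, producing a function $u$ — namely the right-hand side of (\ref{SRpostRL}) with this $f$ and $\phi_0$ — which enjoys the regularity (\ref{regprop}) and is the unique classical solution of problem (\ref{postRL}) for this data.

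Next I would transfer this to problem (\ref{preRL}) via Lemma \ref{lem_equivsol}. The inclusion $C^1([0,T];\text{Dom}(\mathcal L_{\Omega,2}^k))\subset C([0,T]\times\overbar\Omega)$ gives $f_\phi,g\in C((0,T]\times\Omega)$, while $\phi\in C^1_{b,\partial\Omega}((-\infty,0]\times\Omega)\subset C_{b,\partial\Omega}((-\infty,0]\times\Omega)$, so the lemma applies and the extension $\bar u$ of $u$ by $\phi$ on $(-\infty,0]\times\Omega$ is a classical solution of (\ref{preRL}). Its regularity is inherited from that of $u$: the gluing at $t=0$ is continuous because $u(0)=\phi_0=\phi(0)$ and $\phi$ is continuous, $\bar u$ is bounded because $u$ is bounded on the compact interval $[0,T]$ and $\phi$ is bounded, $\bar u$ vanishes on $\partial\Omega$ because $u$ and $\phi$ do, and on $(0,T)\times\Omega$ the $C^{1,2}$ regularity and the bound $|\partial_t\bar u|\le Ct^{-\gamma}$ coincide with those of $u$. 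For uniqueness, the converse part of Lemma \ref{lem_equivsol} shows that any classical solution of (\ref{preRL}) restricts on $[0,T]\times\overbar\Omega$ to a classical solution of (\ref{postRL}) with the same data $f=f_\phi+g$, $\phi_0=\phi(0)$, hence equals $u$ by the uniqueness in Theorem \ref{thm_sspostRL}, and therefore must equal $\bar u$.

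It then remains to identify $\bar u$ with the function $\tilde u$ of (\ref{SR}). On $(-\infty,0]\times\Omega$ this is immediate: there $\tau_0(t)=0<\tau_\Omega(x)$ and the forcing integral in (\ref{SR}) vanishes, so (\ref{SR}) reads $\phi(t,x)$. On $(0,T]\times\Omega$ the $g$-integrals in (\ref{SR}) and in (\ref{SRpostRL}) with $f=f_\phi+g$ cancel, and what is left is the identity
\begin{align*}
&\mathbf E\big[\phi\big(-X^{t,\beta}(\tau_0(t)),X^{x,\alpha}(\tau_0(t))\big)\mathbf 1_{\{\tau_0(t)<\tau_\Omega(x)\}}\big]\\
&\qquad=\mathbf E\big[\phi\big(0,X^{x,\alpha}(\tau_0(t))\big)\mathbf 1_{\{\tau_0(t)<\tau_\Omega(x)\}}\big]+\mathbf E\Big[\int_0^{\tau_{t,x}}f_\phi\big(-X^{t,\beta}(s),X^{x,\alpha}(s)\big)\,ds\Big],
\end{align*}
which I would obtain from (\ref{eqdyn}) in Corollary \ref{cor_SRs} (applicable since $\phi\in C^1_{b,\partial\Omega}((-\infty,0]\times\Omega)$) by splitting each expectation over $\{\tau_0(t)<\tau_\Omega(x)\}$ and its complement: on the complement $\tau_{t,x}=\tau_\Omega(x)$, and since the independent times $\tau_0(t)$ and $\tau_\Omega(x)$ have no common atom one has $\tau_\Omega(x)<\tau_0(t)$ almost surely there, whence $-X^{t,\beta}(\tau_\Omega(x))>0$ and the extension convention $\phi(s,\cdot)=\phi(0,\cdot)$ for $s>0$ makes the two sides of (\ref{eqdyn}) share the same contribution on that event; cancelling it yields the displayed identity, hence $\bar u=\tilde u$ on $(0,T]\times\Omega$, so $\bar u=\tilde u$ throughout.

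I expect the only genuinely delicate point to be this last bookkeeping step: keeping track of the boundary behaviour of $\phi$ (vanishing on $\partial\Omega$, extended by $\phi(0)$ for positive times) when $X^{x,\alpha}$ exits $\Omega$ before the inverse subordinator freezes, so that the pieces in (\ref{eqdyn}) cancel exactly. Everything else is a direct assembly of Theorem \ref{thm_sspostRL}, Lemma \ref{lem_equivsol} and Corollary \ref{cor_SRs}, together with the elementary evaluation of (\ref{SR}) for $t\le 0$.
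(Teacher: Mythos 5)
Your proposal is correct and follows essentially the same route as the paper's own (very terse) proof: apply Theorem \ref{thm_sspostRL} with $f=f_\phi+g$, $\phi_0=\phi(0)$, transfer existence, uniqueness and regularity to problem (\ref{preRL}) via Lemma \ref{lem_equivsol}, and identify (\ref{SRpostRL}) with (\ref{SR}) via Corollary \ref{cor_SRs}. The only difference is that you spell out the bookkeeping the paper leaves implicit — the evaluation of (\ref{SR}) for $t\le 0$ and the cancellation of the contribution on the event $\{\tau_\Omega(x)\le\tau_0(t)\}$ in (\ref{eqdyn}) — and these details are handled correctly.
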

\begin{proof}
By the assumptions on $\phi$ and $g$, and Lemma \ref{lem_equivsol}, existence and uniqueness of classical solutions follows by  Theorem \ref{thm_sspostRL} with $\phi_0=\phi(0)$ and $f=f_\phi+g$. Now apply Corollary \ref{cor_SRs} to obtain the stochastic representation (\ref{SR}) from the stochastic representation (\ref{SRpostRL}).
\end{proof}
\begin{remark}\label{rmk_HK}
Using Corollary \ref{cor_SRs} (or \cite[Theorem 1 for $\lambda=0$]{IkeWa62}), $\mathbf P[-X^{t}(\tau_0(t))\in\{0\}]=0$ for every $t>0$ (see \cite[III, Theorem 4]{ber}) and the independence of $X^{x,\alpha}$ and $-X^{t,\beta}$, one can show that for $(t,x)\in(0,T]\times\Omega$
\begin{equation*}
\mathbf E\left[\phi\left(-X^{t,\beta}(\tau_0(t)),X^{x,\alpha}(\tau_0(t))\right)\mathbf 1_{\{\tau_0(t)<\tau_\Omega(x)\}}\right]=\int_{-\infty}^0\int_\Omega \phi(r,y)H_{\beta,\alpha}^{t,x}(r,y)\,dr\,dy,
\end{equation*}
where
\begin{equation*}\begin{split}
H_{\beta,\alpha}^{t,x}(r,y)&=\int_0^t\frac{-\Gamma(-\beta)^{-1}}{(z-r)^{1+\beta}}\left(\int_0^\infty  p^\Omega_s(x,y)p^\beta_s(t-z)\,ds\right)\,dz.
\end{split}
\end{equation*}
It is straightforward to compute for $(t,x)\in(0,T]\times\Omega$
\begin{equation*}
\mathbf E\left[\int_0^{\tau_0(t)\wedge\tau_\Omega(x)} g\left(-X^{t,\beta}(s),X^{x,\alpha}(s)\right)ds\right]=  \int_0^t\int_\Omega g(z,y)\left(\int_0^\infty p^\Omega_s(x,y)p^\beta_s(t-z)\,ds\right)\,dz\,dy.
\end{equation*}
\end{remark}

\begin{remark}
Notice that the value $\phi(0)$ does not contribute to the solution (\ref{SR}) because $\mathbf P[-X^{t}(\tau_0(t))\in\{0\}]=0$ for all $t>0$. However, $u(t)\to \phi(0)$ as $t\downarrow 0$. We discuss the continuity of the solution at $t=0$ in more detail in Appendix \ref{app_cont}.  
\end{remark}
\begin{remark}
We could drop the condition $\|\partial_t\phi\|_\infty<\infty$ in  Theorem \ref{thm_main}, by weakening Corollary \ref{cor_SRs}, for example to $\phi$ being $\beta^*$-H\"older continuous at $t=0$, for some $ \beta^*>\beta$ and $\phi\in L^\infty((-\infty,0)\times\Omega)$. This is essentially because  $\lim_{t\downarrow 0}f_\phi(t)$ remains well-defined.  However, in order to apply  Theorem \ref{thm_sspostRL} in the proof of Theorem \ref{thm_main} we need to assume $f_\phi\in C^1([0,T];\text{Dom}(\mathcal L_{\Omega,2}^k))$. Hence,   a minimal requirement is that $\phi$ is continuously differentiable in time and both  $\phi$ and $\partial_t\phi$ are $\mathcal O(|r|^{\beta_*})$ at $-\infty$ and $\beta^*$-H\"older continuous at $0$, for some $\beta_*<\beta< \beta^*$, as we need $f_\phi$ and $\partial_tf_\phi$ to be continuous on $[0,T]\times\overline\Omega$.
\end{remark}

\begin{remark}
Suppose that $\phi\in C_{\infty,\partial\Omega}^{2,2 k}((-\infty,0]\times\Omega)$ and  $\phi(t)$ along with its  partial derivatives in space are compactly supported in $\Omega$, for each $t\in (-\infty,0]$, where $k\in\mathbb N$ and  $k>-1+(3d+4)/(2\alpha)$. Then, an application of Remark \ref{rmk_HTk} implies that $f_\phi\in C^1([0,T];\text{Dom}(\mathcal L_{\Omega,2}^k))$.
\end{remark}

\section{Intuition for the stochastic solution (\ref{SR})}\label{sec_int}
We discuss the intuition for the stochastic representation (\ref{SR}) as the solution to the EE (\ref{preRL}). Let us write  $-W(t)=t - X^{\beta}(\tau_0 (t))=-X^{t,\beta}(\tau_0 (t))$. Then  $W (t)$ is the overshoot of the subordinator $ X^{\beta}$ with respect to the barrier $t$, recalling that   the first exit time/inverse subordinator is given by $\tau_0 (t)=\inf\{s>0:\ t\le X^{\beta}(s)\}$. To ease notation we write $ Y^x:= \{X^{x,\alpha}(\tau_0(t))\mathbf 1_{\{\tau_0(t)<\tau_\Omega(x)\}}\}_{t\ge 0}$. Let us start from the intuition of Caputo  EEs, as if $\phi(t,x)=\phi(0,x)=:\phi_0(x)$ for every $t\in(-\infty,0]\times\Omega$, then the solution (\ref{SR}) reads
\begin{equation}
u(t,x)=\mathbf E\left[\phi_0(Y^{x}(t))\right],
\label{intro_SRC}
\end{equation}
and the EE  (\ref{preRL}) equals the Caputo EE (\ref{postRL}) (for $g=f=0$). The probabilistic object  defining the solution (\ref{intro_SRC}) is the anomalous diffusion $Y^x$. Recall that the particle $Y^x$ is either trapped or diffusing.\\
 \textbf{Key observation}: reasoning path-wise, for some $\bar x\in \Omega$
\[
 \text{the interval $(t_1,t_2)$ is the maximal open interval so that $t\mapsto Y^x(t)=\bar x$ is constant}
\] 
$$\iff$$
\[
 \text{the interval $(t_1,t_2)$ is the maximal open interval so that $t\mapsto\tau_0(t)$ is constant}
\] 
$$\iff$$
\[
 \text{the interval $(t_1,t_2)$ is the maximal open interval so that $t\mapsto X^{\beta}(\tau_0(t))$ is constant}
\]
$$\iff$$
\[
 \text{$X^{\beta}(\tau_0(t)-)=t_1$ and $X^{\beta}(\tau_0(t))=t_2$, (i.e. $X^\beta$ jumped from $t_1$ to $t_2$).}
\] 
The last statement implies that 
\[
 \text{ $W (t)= X^{\beta}(\tau_0(t))-t=t_2-t\in (0,t_2-t_1)$ for every $t\in (t_1,t_2)$},
\]
which is the trapping/waiting time of $Y^x(t)$.  In words: the event of the diffusion $ Y^x$  being trapped at a point $\bar x\in\Omega$ at time $t$ until time $t+s$  happens precisely when $W (t)=s$. Hence the law of  $-W (t)$ provides a weighting of the initial condition $\phi(\bar x)$ depending on the trapping/waiting time of $Y^x(t)$. Notice that the process $t\mapsto -W(t)$ is self-similar with index $1$ and it is composed by right continuous $45$ degrees increasing slopes with $0$ leftmost limit  (see Figure $1$).
\begin{figure}[h!] \label{fig1}
\centering
\includegraphics[trim = 0cm 0cm 0cm .2cm, clip=true,height=6cm]{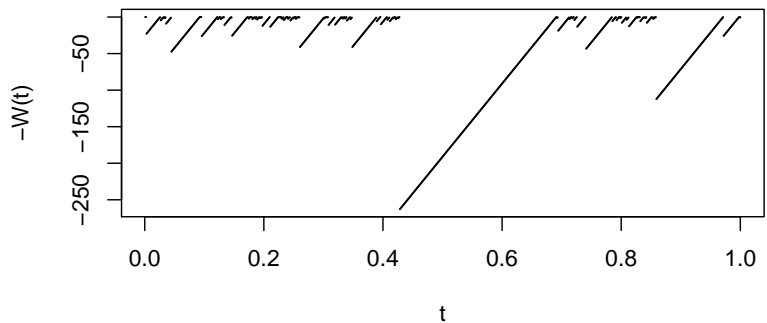}
 \caption{A typical path of the overshoot $t\mapsto -W(t)=-X^{t,\beta}(\tau_0(t))$, $\beta=0.9$. }
\end{figure}

\subsection{A non-memory interpretation}
It is possibly appealing to think about the values $(-\infty,0)\times \Omega$ for the initial condition $\phi$ as the `depth' underneath the surface $\{0\}\times \Omega$  where the particle $Y^x$ moves. Then one can think about the particle $Y^x(t)$ as falling instantaneously at the bottom of a hole/trap of depth $|t_2-t_1|$, and then taking time $|t_2-t_1|$ to climb back up to the surface. Then, at time $t$ one can observe the particle being $|t_2-t|$-depth-units down in the hole. From this viewpoint, once the particle is in the hole it just drifts upward with unit speed. As a quick example, consider the variable separable initial condition $\phi(t,x)= p(t)q(x)$ where $p(t)=\mathbf 1_{\{t< -1\}}$. Then the solution reads for $t>0$
\begin{align*}
u(t,x)&=\mathbf E\left[ q(Y^{x}(t))\mathbf 1_{\{ W(t)>1\}}\right]\\
&=\mathbf E\left[ q(Y^{x}(t)) |  Y^x(t)\text{ is more than 1 unit deep in a trap}\right] \\
\Big(&=\mathbf E\left[ q(Y^{x}(t)) |  Y^x(t)\text{ is trapped for more than 1 time-unit}\right]\Big).
\end{align*} 
Hence, in this example the diffusive particle $Y^x$ will have to be a least a unit deep in a hole (trapped for at least a unit time) for the values at its trapping point at its depth (in the past) to contribute to the solution.

\renewcommand{\thesection}{A} 
\renewcommand{\thesubsection}{\thesection.\Roman{subsection}}

\section{Appendix}

\subsection{Continuity of solution (\ref{SR}) at $t=0$}\label{app_cont}

\begin{proposition}\label{prop:bound}
For every $p,\varepsilon>0$, the following bound on  small overshootings holds, 
\begin{equation*}
\mathbf P[X^{t,\beta}(\tau_0(t))\le \varepsilon]\ge(1-p), \quad \text{for every }  t\le \varepsilon p^{\frac{1}{\beta}}.
\end{equation*}
\end{proposition}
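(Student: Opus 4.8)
The plan is as follows. Write $W(t):=X^{t,\beta}(\tau_0(t))=X^\beta(\tau_0(t))-t\ge 0$ for the overshoot of the $\beta$-stable subordinator $X^\beta$ over the level $t$, matching the notation of Section \ref{sec_int}. The asserted inequality is vacuous when $p\ge 1$, so I assume $0<p<1$; it then suffices to prove the scale-invariant bound
\[
\mathbf P\big[W(t)>\varepsilon\big]\le\Big(\frac{t}{\varepsilon}\Big)^{\beta},\qquad t,\varepsilon>0,
\]
since for $t\le\varepsilon p^{1/\beta}$ this gives $\mathbf P[W(t)>\varepsilon]\le p$, whence $\mathbf P[X^{t,\beta}(\tau_0(t))\le\varepsilon]=1-\mathbf P[W(t)>\varepsilon]\ge 1-p$.

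To bound $\mathbf P[W(t)>\varepsilon]$ I would count the single jump of $X^\beta$ that carries it past the level $t$. Since $X^\beta$ is a driftless subordinator it a.s.\ does not creep over $t$ (and a.s.\ $X^\beta(\tau_0(t))\neq t$, as used in Remark \ref{rmk_HK}), so on the event $\{W(t)>\varepsilon\}$ the path has, a.s., at least one time $s>0$ with $X^\beta(s-)<t$ and $X^\beta(s)>t+\varepsilon$. Reading the Lévy measure $\nu(dr)=-\Gamma(-\beta)^{-1}r^{-1-\beta}\,dr$ of $X^\beta$ off (\ref{gen}) and applying the compensation formula (Lévy system, see e.g.\ \cite{IkeWa62}) to the predictable integrand $\mathbf 1_{\{X^\beta(s-)<t\}}\mathbf 1_{\{X^\beta(s-)+r>t+\varepsilon\}}$ yields
\begin{align*}
\mathbf P\big[W(t)>\varepsilon\big]
&\le\mathbf E\Big[\sum_{s>0}\mathbf 1_{\{X^\beta(s-)<t,\,X^\beta(s)>t+\varepsilon\}}\Big]\\
&=\mathbf E\Big[\int_0^{\tau_0(t)}\nu\big((t+\varepsilon-X^\beta(s),\infty)\big)\,ds\Big],
\end{align*}
where I used that $\{s>0:X^\beta(s-)<t\}$ coincides with $[0,\tau_0(t))$ up to a Lebesgue-null set.

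To finish I would estimate the integrand: for $s<\tau_0(t)$ one has $X^\beta(s)<t$, hence $t+\varepsilon-X^\beta(s)>\varepsilon$ and $\nu\big((t+\varepsilon-X^\beta(s),\infty)\big)\le\nu((\varepsilon,\infty))=\varepsilon^{-\beta}/\Gamma(1-\beta)$. Pulling this constant out and invoking $\mathbf E[\tau_0(t)]=t^{\beta}/\Gamma(\beta+1)$ from (\ref{ident1}) gives
\[
\mathbf P\big[W(t)>\varepsilon\big]\le\frac{\varepsilon^{-\beta}}{\Gamma(1-\beta)}\cdot\frac{t^{\beta}}{\Gamma(\beta+1)}=\frac{(t/\varepsilon)^{\beta}}{\Gamma(1-\beta)\Gamma(1+\beta)},
\]
and since $\Gamma(1-\beta)\Gamma(1+\beta)=\beta\,\Gamma(1-\beta)\Gamma(\beta)=\pi\beta/\sin(\pi\beta)\ge 1$ (reflection formula together with $\sin x\le x$ for $x\ge 0$), the claimed estimate follows.

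The one delicate point is the reduction of $\{W(t)>\varepsilon\}$ to ``there is a jump from below $t$ to above $t+\varepsilon$'', i.e.\ ruling out that $X^\beta$ reaches the level $t$ otherwise than by such a jump; this is exactly where the absence of drift enters, and it can be quoted from \cite{ber}. Everything else is the compensation formula plus the two elementary inputs $X^\beta(s)<t$ for $s<\tau_0(t)$ and $\mathbf E[\tau_0(t)]=t^\beta/\Gamma(\beta+1)$. As an alternative one could bypass the compensation formula and use the explicit joint law of the undershoot $t-X^\beta(\tau_0(t)-)$ and overshoot $W(t)$ for the stable subordinator, built from the potential density $z^{\beta-1}/\Gamma(\beta)$ of (\ref{ident3}) and the Lévy measure, which gives $\mathbf P[W(t)>\varepsilon]=\Gamma(\beta)^{-1}\Gamma(1-\beta)^{-1}\int_0^t z^{\beta-1}(t-z+\varepsilon)^{-\beta}\,dz$ and then the same bound after replacing $(t-z+\varepsilon)^{-\beta}$ by $\varepsilon^{-\beta}$; or one could invoke the self-similarity $W(t)\overset{d}{=}tW(1)$ to reduce to $t=1$ at the outset.
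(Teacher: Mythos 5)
Your proof is correct, and its main line is genuinely different from the paper's. The paper evaluates $\mathbf P[X^{t,\beta}(\tau_0(t))\le\varepsilon]$ exactly via the Ikeda--Watanabe formula \cite{IkeWa62} together with the potential density from (\ref{ident3}), and then compares the resulting integral $a_\varepsilon(t)=\int_0^t(t-y)^{\beta-1}(y+\varepsilon)^{-\beta}\,dy$ with the Beta integral $a=\Gamma(\beta)\Gamma(1-\beta)$ using the pointwise estimate $(y+\varepsilon)^{-\beta}\le p\,y^{-\beta}$ for $y\le t\le\varepsilon p^{1/\beta}$; this is essentially the second alternative you sketch at the end. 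Your primary argument instead avoids the exact overshoot density altogether: the compensation formula bounds $\mathbf P[W(t)>\varepsilon]$ by the expected number of jumps from below $t$ to above $t+\varepsilon$, the L\'evy tail is bounded crudely by $\nu((\varepsilon,\infty))=\varepsilon^{-\beta}/\Gamma(1-\beta)$, and the first identity in (\ref{ident1}) closes the estimate, with the reflection formula confirming that the resulting constant $1/(\Gamma(1-\beta)\Gamma(1+\beta))=\sin(\pi\beta)/(\pi\beta)$ is at most $1$. What your route buys is robustness: it uses only the L\'evy system, the tail of $\nu$, and $\mathbf E[\tau_0(t)]$, so it would extend to subordinators with comparable L\'evy tails; what the paper's route buys is an exact expression from which sharper or asymptotic statements could be read off. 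The one delicate point you flag --- that on $\{W(t)>\varepsilon\}$ the level $t$ is a.s.\ crossed by a single jump with $X^\beta(\tau_0(t)-)<t$ strictly --- does hold for the driftless stable subordinator (no creeping, and the undershoot law has no atom at $t$), as quotable from \cite{ber}, so there is no gap.
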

\begin{proof}
With the first equality holding by \cite[Theorem 1 for $\lambda=0$]{IkeWa62} along with the identity (\ref{ident3}), compute 
\begin{align*}
\mathbf P[X^{t,\beta}(\tau_0(t))\le \varepsilon]&=\int_{-\varepsilon}^0 \left(\frac{1}{\Gamma(\beta)}\int_{0}^{ t} (-\partial_y(y-r)^{-\beta})\frac{ (t-y)^{\beta-1}}{\Gamma(1-\beta)}\, d y\right)\,dr\\
&=\int_{-\varepsilon}^0 \left(\frac{\beta}{\Gamma(\beta)\Gamma(1-\beta)}\int_{0}^{ t} (y-r)^{-\beta-1} (t-y)^{\beta-1}\, d y\right)\,dr\\
&=\frac{-\Gamma(\beta)^{-1}}{\Gamma(-\beta)}\int_{0}^{t} (t-y)^{\beta-1} \left( \int_{-\varepsilon}^0(y-r)^{-\beta-1}  d r\right)\,dy\\
&=\frac{-\Gamma(\beta)^{-1}\beta^{-1}}{\Gamma(-\beta)}(a-a_\varepsilon(t)),
\end{align*}
where $a_\varepsilon(t):= \int_{0}^{t} (t-y)^{\beta-1}(y+\varepsilon)^{-\beta}\,dy$  and $a:=\int_{0}^{t} (t-y)^{\beta-1}y^{-\beta}\,dy=\Gamma(\beta)\Gamma(1-\beta)$ for every $t>0$. Now pick $\tilde t=\varepsilon p^{1/\beta}$. Then for every  $0\le y\le\tilde t$
\begin{equation*}
(y+\varepsilon)^{-\beta}= (y+p^{-1/\beta}\tilde t)^{-\beta}\le p\tilde t^{-\beta}\le py^{-\beta},
\end{equation*}
hence for every $t\le \tilde t$
\[
\frac{a_\varepsilon(t)}{a}=\frac{\int_{0}^{t} (t-y)^{\beta-1}(y+\varepsilon)^{-\beta}dy}{\int_{0}^{t} (t-y)^{\beta-1}y^{-\beta}\,dy}\le p.
\]
  Then $a_\varepsilon(t)\le p a$ for every $t\le \tilde t$, which is equivalent to $a-a_\varepsilon(t)\ge (1-p)a$ for every $t\le \tilde t$. And so we obtain
\[
\mathbf P[X^{t,\beta}(\tau_0(t))\le \varepsilon]\ge(1-p) \frac{-\Gamma(\beta)^{-1}}{\Gamma(-\beta)}\beta^{-1}\Gamma(\beta)\Gamma(1-\beta)=(1-p).
\]
\end{proof}

We now use the bound in Proposition \ref{prop:bound} to prove the following continuity result
\begin{proposition}\label{thm_ctsat0}
Consider the function $\tilde u$ defined in (\ref{SR}), with an arbitrary $\Omega$-valued stochastic (sub-)process $X^x$ in place of $X^{x,\alpha}$, such that $t\mapsto X^{x}(\tau_0(t))$ is stochastically continuous at $t=0$. Also assume $\phi\in B((-\infty,0]\times\Omega))$ and $\phi$ is continuous  at every point in $\{0\}\times\Omega$. Then  for every $x\in\Omega$
\[
\lim_{t\downarrow 0}|\tilde u(t,x)- \phi(0,x)|=0.
\]
\end{proposition}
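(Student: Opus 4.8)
The plan is to split the right-hand side of (\ref{SR}) (with $X^x$ in place of $X^{x,\alpha}$) into the \emph{hitting term} carrying $\phi$ and the \emph{forcing term} carrying $g$, and to send each to $0$ as $t\downarrow 0$. The forcing term is disposed of at once: bounding the integrand by $\|g\|_\infty$ and using the first identity in (\ref{ident1}),
\begin{equation*}
\left|\mathbf E\left[\int_0^{\tau_0(t)\wedge\tau_\Omega(x)} g\left(-X^{t,\beta}(s),X^{x}(s)\right)ds\right]\right|\le \|g\|_\infty\,\mathbf E[\tau_0(t)]=\frac{\|g\|_\infty\,t^\beta}{\Gamma(\beta+1)},
\end{equation*}
which tends to $0$ (and if, as is really the point here, one keeps only the hitting term, this step is vacuous). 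It therefore remains to prove that $\mathbf E\big[\phi\big(-X^{t,\beta}(\tau_0(t)),X^{x}(\tau_0(t))\big)\mathbf 1_{\{\tau_0(t)<\tau_\Omega(x)\}}\big]\to\phi(0,x)$ as $t\downarrow 0$.

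The heart of the argument is the claim that the $\big((-\infty,0]\times\Omega\big)$-valued random variable $Z_t:=\big(-X^{t,\beta}(\tau_0(t)),X^{x}(\tau_0(t))\big)$ converges in probability to the deterministic point $(0,x)$ as $t\downarrow 0$, while $\mathbf 1_{\{\tau_0(t)<\tau_\Omega(x)\}}\to 1$ almost surely. For the first coordinate I would apply Proposition \ref{prop:bound} with $p=(t/\varepsilon)^\beta$ (so $t=\varepsilon p^{1/\beta}$, and $p\le 1$ once $t\le\varepsilon$): since $-X^{t,\beta}(\tau_0(t))\le 0$, it yields $\mathbf P[-X^{t,\beta}(\tau_0(t))<-\varepsilon]\le(t/\varepsilon)^\beta\to 0$ for every $\varepsilon>0$, i.e.\ the overshoot converges to $0$ in probability. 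For the second coordinate this is exactly the assumed stochastic continuity of $t\mapsto X^{x}(\tau_0(t))$ at $0$. A union bound promotes these two marginal convergences (to constants) to the joint convergence $Z_t\to(0,x)$ in probability. Finally $\tau_0$ is non-decreasing and right-continuous with $\tau_0(0)=0$, so $\tau_0(t)\downarrow 0$ a.s., and since $\tau_\Omega(x)>0$ a.s.\ (as $x$ lies in the open set $\Omega$) the indicator tends to $1$ a.s.

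To conclude I would fix $\delta>0$ and invoke the continuity of $\phi$ at $(0,x)$: there is $\eta>0$ with $|\phi(s,y)-\phi(0,x)|<\delta$ whenever $(s,y)\in(-\infty,0]\times\Omega$ satisfies $|s|+|y-x|<\eta$. On the event $A_t:=\{|Z_t-(0,x)|<\eta\}\cap\{\tau_0(t)<\tau_\Omega(x)\}$ the integrand lies within $\delta$ of $\phi(0,x)$, while everywhere it is dominated by $\|\phi\|_\infty$; since $\mathbf P[A_t^c]\to 0$ by the previous paragraph, $\limsup_{t\downarrow 0}\big|\mathbf E[\phi(Z_t)\mathbf 1_{\{\tau_0(t)<\tau_\Omega(x)\}}]-\phi(0,x)\big|\le\delta$, and $\delta>0$ was arbitrary. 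The one point that needs care is precisely this last ``bounded convergence in probability'' step: since $\phi$ is continuous only on the slice $\{0\}\times\Omega$ and merely bounded and measurable elsewhere, one cannot combine the continuous mapping theorem with ordinary dominated convergence along the path of $Z_t$, but must localize near $(0,x)$ as above and let the boundedness of $\phi$ swallow the small-probability bad set. The remaining book-keeping (the indicator and the behaviour of $\tau_0$ near $0$) is routine.
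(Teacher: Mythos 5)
Your proposal is correct and follows essentially the same route as the paper: it controls the overshoot coordinate via Proposition \ref{prop:bound}, the spatial coordinate via the assumed stochastic continuity, and then localizes near $(0,x)$ using continuity of $\phi$ there and boundedness of $\phi$ on the complementary small-probability event. The only cosmetic differences are that you package the two marginal estimates as convergence in probability of $Z_t$ before localizing (the paper writes the three-term triangle inequality directly, choosing $p=\delta$ and $t\le\delta^{1/\beta}\varepsilon$ instead of your $p=(t/\varepsilon)^\beta$), and that you explicitly dispose of the $g$-term and the indicator, which the paper leaves implicit.
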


\begin{proof}
 Let $x\in\Omega$. Let $\delta>0$ be arbitrary. Pick $\varepsilon,\varepsilon'>0$ such that 
\[
\sup_{(s,y)\in(-\varepsilon,0]\times B_{\varepsilon '}(x)}|\phi(s,y)-\phi(0,x)|\le \delta.
\]
Then 
\begin{align*}
|\tilde u(t,x)-\phi(0,x)|\le&\ \left|\mathbf E\left[(\phi(-X^{t,\beta}(\tau_0(t)),X^{x}(\tau_0(t))-\phi(0,x))\mathbf 1_{\{X^{t,\beta}(\tau_0(t))>\varepsilon\}}\right]\right|\\
&\ +\left|\mathbf E\left[(\phi(-X^{t,\beta}(\tau_0(t)),X^{x}(\tau_0(t)))-\phi(0,x))\mathbf1_{\{X^{t,\beta}(\tau_0(t))\le\varepsilon\}}\right]\right|\\
\le&\ 2\|\phi\|_{\infty}\mathbf P[X^{t,\beta}(\tau_0(t))>\varepsilon]\\
&\ +\mathbf E\left[|\phi(-X^{t,\beta}(\tau_0(t)),X^{x}(\tau_0(t)))-\phi(0,x)|\mathbf1_{\{X^{t,\beta}(\tau_0(t))\le\varepsilon, |X^{x}(\tau_0(t)))-x|\le\varepsilon'\}}\right]\\
&\ +\mathbf E\left[|\phi(-X^{t,\beta}(\tau_0(t)),X^{x}(\tau_0(t)))-\phi(0,x)|\mathbf1_{\{X^{t,\beta}(\tau_0(t))\le\varepsilon,|X^{x}(\tau_0(t)))-x|>\varepsilon'\}}\right]\\
\le&\ 2\|\phi\|_{\infty}\mathbf P[X^{t,\beta}(\tau_0(t))>\varepsilon] +\delta +2\|\phi\|_{\infty}\mathbf P[|X^{x}(\tau_0(t)))-x|>\varepsilon']
\end{align*}
 Now, by Proposition \ref{prop:bound}, for all $ t\le \delta^{\frac 1 \beta}\varepsilon$ it holds that $\mathbf P[X^{t,\beta}(\tau_0(t))>\varepsilon]\le \delta$.  Then  the estimate above reads
\begin{align*}
|\tilde u(t,x)-\phi(0,x)|\le&\ 2\|\phi\|_{\infty}\delta+\delta+ 2\|\phi\|_{\infty}\mathbf P[|X^{x}(\tau_0(t)))-x|>\varepsilon'],\quad \text{for every }t\le \delta^{\frac 1 \beta}\varepsilon.
\end{align*}
To conclude, by stochastic continuity,  pick a possibly smaller threshold $\bar t$  to obtain 
\[
\mathbf P[|X^{x}(\tau_0(t)))-x|>\varepsilon']\le \delta\quad \text{for every }t\le \bar t.
\]
\end{proof}
\begin{remark}
The continuity at $t=0$ of Proposition \ref{thm_ctsat0}  is not obvious. For example it is clear that Proposition \ref{thm_ctsat0} fails if we replace $-X^{t,\beta}$ with a decreasing Poisson process. In fact Proposition \ref{thm_ctsat0} fails in general if we replace  $-X^{t,\beta}$ with a decreasing compound Poisson process  $-N^t(s)$  with generator
$$ 
-D _\infty^{(\nu)} f(t):=\int_0^\infty (f(t-r)-f(t))\,\nu(dr),\quad \text{where}\quad0<\lambda:=\int_0^\infty \nu(dr)<\infty. 
$$ 
To see this, observe that for every $\varepsilon,t>0$
\begin{align*}
\mathbf P\left[N^t\left(\tau_0 (t)\right)> \varepsilon\right] \ge \mathbf P\left[\text{first jump of $N^t$ is  greater than $t+\varepsilon$}\right]=\int_{t+\varepsilon}^\infty \frac{\nu(dr)}{\lambda},
\end{align*} 
and note that the right hand side is non-decreasing as $t\downarrow 0$, where $\tau_0 $ is the left continuous inverse of $N^0$. As $\int_0^\infty\nu(dr)>0 $ we can choose $\varepsilon_0>0$ and $\bar t>0$  so that 
$$
\inf_{t\le\bar t}\mathbf P\left[N^t\left(\tau_0 (t)\right)> \varepsilon_0\right]\ge\int_{\bar t+\varepsilon_0}^\infty\frac{\nu(dr)}{\lambda}=:c>0.
$$
 Now, consider a continuous non-negative  $\phi$ with $\phi(0)=0$,   such that $\inf_{r\in (-\infty,-\varepsilon_0]}\phi(r)>0$. Then for every $t\le \bar t$
\begin{align*}
|\tilde u(t)-\phi(0)|&=\mathbf E\left[\phi(-N^t\left(\tau_0(t)\right)\left(\mathbf1_{\{N^t(\tau_0(t))> \varepsilon_0\}}+\mathbf1_{\{N^t(\tau_0(t))\le \varepsilon_0\}}\right)\right]\\
&\ge \mathbf E\left[\phi(-N^t\left(\tau_0(t)\right))\mathbf1_{\{N^t(\tau_0(t))> \varepsilon_0\}}\right]\\
&\ge \inf_{r\in (-\infty,-\varepsilon_0]}\phi(r)\mathbf P\left[N^t\left(\tau_0(t)\right)> \varepsilon_0\right]\\
&\ge \inf_{r\in (-\infty,-\varepsilon_0]}\phi(r)c>0.
\end{align*}
\end{remark}

\subsection{Proof of Lemma \ref{thm_semigroups}-(i)-(ii)-(iii)}\label{proof-i-ii-iii}
The three proofs are essentially the same,  hence  we prove only (ii). 

Note that $P^{\beta,\text{kill}}_sP^\Omega_r=P^\Omega_rP^{\beta,\text{kill}}_s$ for every $s,r\ge 0$, and that 
\[
\|P^\Omega_s f\|_{C([0,T]\times\overbar\Omega)},\ \|P^{\beta,\text{kill}}_s f\|_{C([0,T]\times\overbar\Omega)}\le \|f\|_{C([0,T]\times\overbar\Omega)},
\] for every $f\in C_{0,\partial\Omega}([0,T]\times\Omega)$, $s\ge0$. It is then easy to prove that $P^{\beta,\Omega,\text{kill}}$ is sub-Feller semigruop on $ C_{0,\partial\Omega}([0,T]\times\Omega)$. We denote the generator of $P^{\beta,\Omega,\text{kill}}$ by $(\mathcal L_{\beta,\Omega}^{\text{kill}},\text{Dom}(\mathcal L_{\beta,\Omega}^{\text{kill}}))$. Let $f=pq $, where $p\in \mathcal C_\beta^{\text{kill}}$ and $q\in\mathcal C_\Omega $. Then, by a standard  triangle inequality argument, we obtain
\begin{align*}
\Big|\frac{P^{\beta,\text{kill}}_hP^\Omega_h f(t,x)-f(t,x)}{h}&-(\mathcal L_\beta^{\text{kill}} +\mathcal L_\Omega )f(t,x)\Big|\\
\le&\ \|p\|_{C([0,T])}\left\|\frac{P^\Omega_hq-q}{h} -\mathcal L_\Omega q\right\|_{C(\overbar\Omega)} + \|\mathcal L_\Omega q\|_{C(\overbar\Omega)}\left\| P^{\beta,\text{kill}}_h p-p\right\|_{C([0,T])} \\
&\ +\|q\|_{C(\overbar\Omega)}\left\| \frac{P^{\beta,\text{kill}}_h p-p}{h}-\mathcal L_\beta^{\text{kill}}p\right\|_{C([0,T])}\to 0,
\end{align*} 
as $h\downarrow 0$. An induction argument proves that $ \text{Span} \{\mathcal C_\beta^{\text{kill}}\cdot \mathcal C_\Omega \}\subset  \text{Dom}(\mathcal L_{\beta,\Omega}^{\text{kill}})$ and $\mathcal L_{\beta,\Omega}^{\text{kill}}= (\mathcal L_\beta^{\text{kill}} +\mathcal L_\Omega)$ on $\text{Span} \{\mathcal C_\beta^{\text{kill}}\cdot \mathcal C_\Omega \}$. Observing that  $\text{Span} \{\mathcal C_\beta^{\text{kill}}\cdot \mathcal C_\Omega \}$ is invariant under $P^{\beta,\Omega\text{kill}}$ and it is a subspace of $\text{Dom}(\mathcal L_{\beta,\Omega}^{\text{kill}})$, if we can prove that $\text{Span} \{\mathcal C_\beta^{\text{kill}}\cdot \mathcal C_\Omega\}$ is dense in $ C_{0,\partial\Omega}([0,T]\times\Omega)$, we are done by  \cite[Lemma 1.34]{Schilling}. So proceed by noting that  set $\text{Span} \{C^\infty([0 ,T])\cdot C^\infty(\overbar\Omega)\}$ is a sub-algebra of $C([0,T]\times\overbar\Omega)$ that contains constant functions and separates points. Hence  $\text{Span} \{C^\infty([0 ,T])\cdot C^\infty(\overbar\Omega)\}$ is dense in $C([0,T]\times\overbar\Omega)$ by Stone-Weierstrass Theorem for compact\footnote{In the case of unbounded domains (part (iii) of the current lemma) use the Stone-Weierstrass Theorem for locally compact Hausdorff spaces.} Hausdorff spaces. We now prove density of the following set
\begin{align*}
\text{Span} \{C_c^\infty((0 ,T])\cdot C_c^\infty(\Omega)\}&\subset C_{0,\partial\Omega}([0,T]\times\Omega).
\end{align*}
For   $f\in  C_{0,\partial\Omega}([0,T]\times\Omega)$ we take a sequence $\{f_n\}_{n\in\mathbb N}\subset \text{Span} \{C^\infty([0 ,T])\cdot C^\infty(\overbar\Omega)\} $ such that $f_n\to f$, where $f_n(t,x)=\sum_{i=1}^{N_n}p_{i,n}(t)q_{i,n}(x)$, for some $N_n\in\mathbb N$ depending on $n\in\mathbb N$. Let $ 1_{T,n}\in C_c^\infty((0,T])$ and $ 1_{\Omega,n}\in C_c^\infty(\Omega)$ be smooth functions for each $n\in\mathbb N$, such  that $ 0\le 1_{T,n}, 1_{\Omega,n}\le 1 $, $  1_{T,n}(t)=  1_{\Omega,n}(x)=1$ for  $t\in (\frac1n,T]$ and $x\in K_n$, and $  1_{T,n}(t)=  1_{\Omega,n}(x)=0$ for  $t\in (0,\frac1{n+1}]$ and $x\in\Omega\backslash K_{n+1}$,  where $K_n$ is compact, $K_n\subset K_{n+1}\subset \Omega$ for each $n$, and $\cup_n K_n=\Omega$.  Define for each $n\in\mathbb N$, 
$$(t,x)\mapsto\tilde f_n(t,x):=\sum_{i=1}^{N_n}p_{i,n}(t)1_{T,n}(t)q_{i,n}(x)1_{\Omega,n}(x)\in \text{Span} \{C_c^\infty((0 ,T])\cdot C_c^\infty(\Omega)\}.$$
  Then, as $n\to\infty$
\begin{align*}
\|\tilde f_n -f\|_{C ([0,T]\times \Omega)}\le&\ \|f_n -f\|_{C ([\frac1n,T]\times K_n)} + \|\tilde f_n -f\|_{C \left((\frac{1}{n+1},\frac1n]\times \overbar\Omega\cup [0,T]\times K_{n+1}\backslash K_n\right)}\\
&\ +\|f\|_{C \left([0,T]\times \overbar\Omega \backslash K_{n+1}\cup [0,\frac{1}{n+1}]\times \overbar\Omega\right)}\to 0.
\end{align*}
As $C_c^\infty(\Omega) \not\subset \text{Dom}(\mathcal L_\Omega)$ we need to work a bit more. For any  $u\in C_{0,\partial\Omega}([0,T]\times\Omega) $ we can now take a uniformly approximating sequence $\{u_n\}_{n\in\mathbb N}\subset \text{Span} \{C_c^\infty((0 ,T])\cdot C_c^\infty(\Omega)\}$. Denote $u_n(t,x)=\sum_{i=1}^{N_n}p_{i,n}(t)q_{i,n}(x)$, for some $N_n\in\mathbb N$ depending on $n\in\mathbb N$, where $p_{i,n}\in C_c^\infty((0 ,T]),\ q_{i,n}\in C_c^\infty(\Omega)$ are non-zero, for each $i\in \{1,...,N_n\}$, $n\in\mathbb N$. As $\mathcal C_\beta$ and $\mathcal C_\Omega$ are dense in $C_0([0 ,T])\supset C_c^\infty((0 ,T])$ and $ C_{\partial\Omega}(\Omega)\supset C_c^\infty(\Omega)$, respectively, we can pick $\{(\tilde p_{i,n}, \tilde q_{i,n}):i\in  \{1,...,N_n\},n\in\mathbb N\}\subset \mathcal C_\beta\times \mathcal C_\Omega$, in the following fashion: for each triplet  $(N_n, p_{i,n},  q_{i,n})$, first pick $\tilde p_{i,n}$ so that 
\[
\| p_{i,n}-\tilde p_{i,n}\|_{C[0,T]}\le\frac{1}{ nN_n\|q_{i,n}\|_{C[0,T]}},
\]
secondly pick $\tilde q_{i,n}$ so that 
\[
\| q_{i,n}-\tilde q_{i,n}\|_{C[0,T]}\le \frac{1}{nN_n\|\tilde p_{i,n}\|_{C[0,T]}}.
\]
 Then, after defining $\tilde u_n(t,x):=\sum_{i=1}^{N_n}\tilde p_{i,n}(t)\tilde q_{i,n}(x)$, we obtain
\begin{align*}
\|u-\tilde u_n\|_\infty&\le \|u- u_n\|_\infty+\|u_n-\tilde u_n\|_\infty\\
&\le \|u- u_n\|_\infty+\sum_{i=1}^{N_n}\| p_{i,n} q_{i,n}-\tilde  p_{i,n}\tilde q_{i,n}\|_\infty\\
&\le \|u- u_n\|_\infty+\sum_{i=1}^{N_n}\left(\| q_{i,n}\|_\infty\|p_{i,n}-\tilde  p_{i,n}\|_\infty+\| \tilde p_{i,n}\|_\infty\|q_{i,n}-\tilde  q_{i,n}\|_\infty\right)\\
&\le \|u- u_n\|_\infty+\sum_{i=1}^{N_n}\left(\frac{\| q_{i,n}\|_\infty}{ nN_n\|q_{i,n}\|_{C[0,T]}}+\frac{\| \tilde p_{i,n}\|_\infty}{nN_n\|\tilde p_{i,n}\|_{C[0,T]}}\right)\\
&= \|u- u_n\|_\infty+\sum_{i=1}^{N_n}\frac{2}{nN_n}\\
&\le \|u- u_n\|_\infty+\frac{2}{n}\to 0,\quad \quad \text{as }n\to\infty. 
\end{align*}

\let\oldbibliography\thebibliography
\renewcommand{\thebibliography}[1]{\oldbibliography{#1}
\setlength{\itemsep}{0pt}}


\begin{thebibliography}{99}

\bibitem{Allen17} Allen, M.. \emph{Uniqueness for weak solutions of parabolic equations with a fractional time derivative}. arXiv preprint arXiv:1705.03959 (2017).

\bibitem{Caff16} Allen, M., Caffarelli L., Vasseur A.. \emph{A parabolic problem with a fractional time derivative}. Archive for Rational Mechanics and Analysis 221.2 (2016): 603-630.

\bibitem{Meer05}Baeumer, B.,  Kurita S.,  Meerschaert M. M.. \emph{Inhomogeneous fractional diffusion equations}. Fractional Calculus and Applied Analysis 8.4 (2005): 371-386.

\bibitem{MeerB} Baeumer, Boris, Tomasz Luks, and Mark M. Meerschaert. \emph{Space-time fractional Dirichlet problems.} arXiv preprint arXiv:1604.06421 (2016).

\bibitem{BM01} Baeumer, Boris, and Mark M. Meerschaert. \emph{Stochastic solutions for fractional Cauchy problems.} Fractional Calculus and Applied Analysis 4.4 (2001): 481-500.

\bibitem{BMN09} Baeumer B, Meerschaert M, Nane E. \emph{Brownian subordinators and fractional Cauchy problems. }Transactions of the American Mathematical Society. 2009;361(7):3915-30.

\bibitem{BC11} Barlow, Martin T.; \u Cern\'y, Ji\u r\'i. \emph{Convergence to fractional kinetics for random walks associated with unbounded conductances.} Probability theory and related fields, 2011, 149.3-4: 639-673.

\bibitem{MeeB13} Benson DA, Meerschaert MM, Revielle J. \emph{Fractional calculus in hydrologic modeling: A numerical perspective.} Advances in water resources. 2013 Jan 1;51:479-97.

\bibitem{Vio10} Bernyk, Violetta, Robert C. Dalang, and Goran Peskir. \emph{Predicting the ultimate supremum of a stable L\'evy process with no negative jumps.} The Annals of Probability 39.6 (2011): 2385-2423.

\bibitem{ber} J.~Bertoin (1996) \newblock {\it L\'evy processes}. Cambridge University Press.

\bibitem{Bingham71} Bingham, N. H. \emph{Limit theorems for occupation times of Markov processes}. Zeitschrift f\"ur Wahrscheinlichkeitstheorie und verwandte Gebiete 17.1 (1971): 1-22.

\bibitem{Bob10} Bobaru F , Duangpanya M . \emph{The peridynamic formulation for transient heat conduction}. Int J Heat Mass Transf 2010;53(19):4047–59 .

\bibitem{Bon16} Bonforte, M., V\'azquez, J. L. (2016). \emph{Fractional nonlinear degenerate diffusion equations on bounded domains part I. Existence, uniqueness and upper bounds. }Nonlinear Analysis, 131, 363-398.

\bibitem{Bogdan} Bogdan, K., Byczkowski, T., Kulczycki, T., Ryznar, M., Song, R.,  Vondracek, Z. (2009). \emph{Potential analysis of stable processes and its extensions.} Springer Science \& Business Media.

\bibitem{Schilling} B\"ottcher, Bj\"orn, R. L. Schilling, and Jian Wang. \emph{L\'evy matters. III.: L\'evy-type Processes: Construction, Approximation and Sample Path Properties}. Cham: Springer (2013).

\bibitem{Chen17} Chen Z-Q, (2017). \emph{Time fractional equations and probabilistic representation}, Chaos, Solitons \& Fractals, 102, 168-174.

\bibitem{Du17} Chen, A., Du, Q., Li, C., Zhou, Z. (2017). \emph{Asymptotically compatible schemes for space-time nonlocal diffusion equations}. Chaos, Solitons \& Fractals, 102, 361-371.

\bibitem{CK18}  Chen Zhen-Qing, Panki Kim, Takashi Kumagai, Jian Wang (2017). \emph{Heat kernel estimates for time fractional equations}.arXiv:1708.05863.

\bibitem{MeerChen}Chen, Zhen-Qing, Mark M. Meerschaert, and Erkan Nane. \emph{Space-time fractional diffusion on bounded domains.} Journal of Mathematical Analysis and Applications 393.2 (2012): 479-488.

\bibitem{DS18} Deng CS, Schilling RL. \emph{Exact Asymptotic Formulas for the Heat Kernels of Space and Time-Fractional Equations.} arXiv preprint:1803.11435. 2018 Mar 30.

\bibitem{kai} Diethelm, K. (2010), \emph{The Analysis of Fractional Differential Equations, An application-oriented exposition using differential operators of Caputo Type}, Lecture Notes in  Mathematics, v. 2004,  Springer.


\bibitem{DYZ17} Du, Q.,  Yang, V., Zhou, Z., \emph{ Analysis of a nonlocal-in-time parabolic equation}. Discrete and continuous dynamical systems series B, Vol 22, n. 2, 2017.

\bibitem{dynkin1965} Dynkin, E. B. (1965), \emph{Markov processes}, Vol. I, Springer-Verlag.

\bibitem{Ko03} Eidelman SD, Kochubei AN. \emph{Cauchy problem for fractional diffusion equations.} Journal of differential equations. 2004 May 20;199(2):211-55.

\bibitem{Gil08} Gilboa G , Osher S . \emph{Nonlocal operators with applications to image processing}. Multiscale Model Simul 20 08;7(3):10 05–28 .

\bibitem{Gu12} Du Q , Gunzburger M , Lehoucq RB , Zhou K . \emph{Analysis and approximation of nonlocal diffusion problems with volume constraints.} SIAM Rev 2012;54(4):667–96 .

\bibitem{Fed} Fedotov S, Iomin A. \emph{Migration and proliferation dichotomy in tumor-cell invasion.} Physical Review Letters. 2007 Mar 12;98(11):118101.

\bibitem{HKT17} Hern\'andez-Hern\'andez, M.E.,  Kolokoltsov, V.N.,  Toniazzi, L.,  (2017). \emph{ Generalised fractional evolution equations of Caputo type}. Chaos, Solitons \& Fractals, 102, 184-196.

\bibitem{IkeWa62} Ikeda, Nobuyuki, and Shinzo Watanabe. \emph{On some relations between the harmonic measure and the L\'evy measure for a certain class of Markov processes}. Journal of Mathematics of Kyoto University 2.1 (1962): 79-95.

\bibitem{Kra03}Kr\"ageloh, Alexander M. \emph{Two families of functions related to the fractional powers of generators of strongly continuous contraction semigroups.} Journal of mathematical analysis and applications 283.2 (2003): 459-467.


\bibitem{Leon} Leonenko NN, Meerschaert MM, Sikorskii A.\emph{ Fractional pearson diffusions.} Journal of mathematical analysis and applications. 2013 Jul 15;403(2):532-46.

\bibitem{Mag10} M. Magdziarz: Path properties of subdiffusion–a martingale approach. Stoch. Models 26
(2010) 256–271.


\bibitem{Mag07} Magdziarz M, Weron A, Weron K. \emph{Fractional Fokker-Planck dynamics: Stochastic representation and computer simulation}. Physical Review E. 2007 Jan 26;75(1):016708.

\bibitem{Mag15} M. Magdziarz, R.L. Schilling: Asymptotic properties of Brownian motion delayed by inverse
subordinators. Proc. Amer. Math. Soc. 143 (2015) 4485–4501


\bibitem{meershe} Meerschaert, Mark M., David A. Benson, Hans-Peter Scheffler,  Boris Baeumer. \emph{Stochastic solution of space-time fractional diffusion equations.} Physical Review E 65, no. 4 (2002): 041103.

\bibitem{Vel09} Meerschaert MM, Nane E, Vellaisamy P. \emph{Fractional Cauchy problems on bounded domains}. The Annals of Probability. 2009;37(3):979-1007.

\bibitem{Vel11} Meerschaert MM, Nane E, Vellaisamy P. \emph{Distributed-order fractional diffusions on bounded domains}. Journal of Mathematical Analysis and Applications. 2011 Jul 1;379(1):216-28.

\bibitem{schef04}  Meerschaert M.M., H.P. Scheffler: \emph{Limit theorems for continuous time random walks with
infinite mean waiting times}. J. Appl. Probab. 41 (2004) 623–638.

\bibitem{Meerschaert2012} Meerschaert, M.M., Sikorskii, A. (2012), \emph{Stochastic Models for Fractional Calculus}, De Gruyter Studies in Mathematics, Book \textbf{43}.

\bibitem{miao} Ming, Liao. \emph{The Dirichlet problem of a discontinuous Markov process.} Acta Mathematica Sinica 5.1 (1989): 9-15.


\bibitem{Pod98} Podlubny, I. (1998). \emph{Fractional differential equations: an introduction to fractional derivatives, fractional differential equations, to methods of their solution and some of their applications} (Vol. 198). Elsevier.

\bibitem{Sai05} Piryatinska A, Saichev AI, Woyczynski WA. Models of anomalous diffusion: the subdiffusive case. Physica A: Statistical Mechanics and its Applications. 2005 Apr 15;349(3-4):375-420.

\bibitem{Zas97} Saichev AI, Zaslavsky GM.\emph{ Fractional kinetic equations: solutions and applications.} Chaos: An Interdisciplinary Journal of Nonlinear Science. 1997 Dec;7(4):753-64.

\bibitem{kilbas}  Samko, Stefan G., Anatoly A. Kilbas, and Oleg I. Marichev. \emph{Fractional integrals and derivatives.} Theory and Applications, Gordon and Breach, Yverdon 1993 (1993): 44.

\bibitem{Scal06} Scalas E, Five years of continuous-time random walks in econophysics, Complex Netw. Econ. Interactions 567 (2006) 3–16

\bibitem{Scal00} Scalas E, R Gorenflo, F Mainardi. \emph{Fractional calculus and continuous-time finance}. Physica A: Statistical Mechanics and its Applications, 2000.

\bibitem{Sil10} Silling SA , Lehoucq RB . \emph{Peridynamic theory of solid mechanics}. Adv Appl Mech 2010;44:73–168 .

\bibitem{Zas94} Zaslavsky GM. Fractional kinetic equation for Hamiltonian chaos. Physica D: Nonlinear Phenomena. 1994 Sep 1;76(1-3):110-22.

\bibitem{ZMB08} Zhang Y, Meerschaert MM, Baeumer B.\emph{ Particle tracking for time-fractional diffusion}. Physical Review E. 2008 Sep 19;78(3):036705.

\bibitem{zolotarev}  Zolotarev, V. M. (1986) \emph{One-dimensional stable distributions}. Translations of Mathematical monographs, vol. 65, American Mathematical Society, 1986. 



\end{thebibliography}
\end{document}